\DeclareMathAlphabet{\mathbfsf}{\encodingdefault}{\sfdefault}{bx}{n}
\DeclareFontShape{OT1}{cmr}{bx}{sc}{<-> cmbcsc10}{}
\newcommand{\define}[1]{\textbf{#1}}
\newcommand{\cosheaf}[1]{\widehat{#1}}
\theoremstyle{definition}
\newtheorem{thm}{Theorem}[section]
\newtheorem{lem}[thm]{Lemma}
\newtheorem{defn}[thm]{Definition}
\newtheorem{cor}[thm]{Corollary}
\newtheorem{prop}[thm]{Proposition}
\newtheorem{ex}[thm]{Example}
\newtheorem{rmk}[thm]{Remark}
\newcommand{\cJ}{\mathcal{J}}
\newcommand{\cM}{\mathcal{M}}
\newcommand{\calU}{\mathcal{U}}
\newcommand{\covU}{\mathcal{U}}
\newcommand{\cU}{\mathcal{U}}
\newcommand{\aat}{\mathbfsf{A}}
\newcommand{\bat}{\mathbfsf{B}}
\newcommand{\cat}{\mathbfsf{C}}
\newcommand{\dat}{\mathbfsf{D}}
\newcommand{\Vect}{\mathbfsf{Vect}}
\newcommand{\vect}{\mathbfsf{vect}}
\newcommand{\Fun}{\mathbfsf{Fun}}
\newcommand{\Int}{\mathbfsf{Int}}
\newcommand{\Open}{\mathbfsf{Open}}
\newcommand{\Down}{\mathbfsf{Down}}
\newcommand{\Up}{\mathbfsf{Up}}
\newcommand{\CoShv}{\mathbfsf{CoShv}}
\newcommand{\GridO}{\mathbfsf{GridO}}
\newcommand{\Cell}{\mathbfsf{Cell}}
\newcommand{\Trans}{\mathbfsf{Trans}}
\newcommand{\op}{\mathbfsf{op}}
\newcommand{\UU}{\mathbfsf{U}}
\newcommand{\DD}{\mathbfsf{D}}
\newcommand{\R}{\mathbb{R}}
\newcommand{\Z}{\mathbb{Z}}
\newcommand{\field}{\Bbbk}
\newcommand{\Lan}{\mathsf{Lan}}
\newcommand{\Ran}{\mathsf{Ran}}
\newcommand{\id}{\text{id}}
\newcommand{\pP}{\mathcal{P}}
\newcommand{\pQ}{\mathcal{Q}}
\newcommand{\pZ}{\mathcal{Z}}
\newcommand{\pW}{\mathcal{W}}
\newcommand{\topX}{\mathcal{X}}
\newcommand{\topY}{\mathcal{Y}}
\newcommand{\pL}{\mathcal{L}}
\newcommand{\st}{\text{star}}
\newcommand{\e}{\epsilon}
\newcommand{\squigrightarrow}{\rightsquigarrow}
\def\l@subsection{\@tocline{2}{0pt}{2.5pc}{5pc}{}}
\def\l@subsubsection{\@tocline{2}{0pt}{5pc}{7.5pc}{}}
\begin{document}

\title{A Relative Theory of Interleavings}

\begin{center}
\begin{abstract}
	The interleaving distance, although originally developed for persistent homology, has been generalized to measure the distance between functors modeled on many posets or even small categories.
  Existing theories require that such a poset have a superlinear family of translations or a similar structure.
  However, many posets of interest to topological data analysis, such as zig-zag posets and the face relation poset of a cell-complex, do not admit interesting translations, and consequently don't admit a nice theory of interleavings.
  In this paper we show how one can side-step this limitation by providing a general theory where one maps to a poset that does admit interesting translations, such as the lattice of down sets, and then defines interleavings relative to this map.
  Part of our theory includes a rigorous notion of discretization or ``pixelization'' of poset modules, which in turn we use for interleaving inference.
  We provide an approximation condition that in the setting of lattices gives rise to two possible pixelizations, both of which are guaranteed to be close in the interleaving distance. Finally, we conclude by considering interleaving inference for cosheaves over a metric space and give an explicit description of interleavings over a grid structure on Euclidean space.
\end{abstract}
\end{center}

\author{Magnus Bakke Botnan, Justin Curry, Elizabeth Munch}
\maketitle

\section{Introduction}

This paper builds upon the work of Bubenik, de Silva, and Scott in~\cite{Bubenik2014a} by reconsidering the question of how to define a distance between functors modeled on an arbitrary poset $\pP$.
Those authors were motivated, as we are, by the desire to compare filtrations of a space that are indexed by a poset, by considering them through the lens of persistent homology.
Although the literature on persistent homology is vast, with independent origins in Frosini~\cite{Frosini1990distance,Frosini1992},
Barannikov~\cite{Barannikov1994},
Robins~\cite{robins1999towards},
Edelsbrunner, Letscher and Zomorodian~\cite{Edelsbrunner2000, Edelsbrunner2002},
it was the study of the stability of persistent homology~\cite{Cohen-Steiner2007} that led to the \emph{interleaving distance}~\cite{Chazal2009b} and our first prototype for defining distances between such functors.

One of the several approaches considered in~\cite{Bubenik2014a} to generalizing the interleaving distance was to assume the existence of a \emph{super-linear family of translations} on a poset.
Deferring the concept of a \emph{super-linear} family for now, we review the notion of a translation on a poset in order to show why it is deficient for the examples we wish to consider.

\begin{defn}\label{defn:translation}
A \define{translation} on a poset $\pP$ is a map $T\colon\pP \to \pP$ that is monotone, i.e.~$p\leq q$ implies $T(p) \leq T(q)$, and satisfies the identity $p\leq T(p)$ for all $p\in \pP$. Alternatively, if one views a poset as a category, where the objects are elements of $\pP$ and there is a unique morphism from $p\to q$ if and only if $p\leq q$, then a translation is equivalently described as a functor $T\colon\pP \to \pP$ that admits a natural transformation from the identity, i.e.~$\eta\colon \id_{\pP} \Rightarrow T$.
\end{defn}

Unfortunately, for a poset of the following ``zig-zag'' form, where arrows point towards elements that are higher in the partial order,
\begin{center}
\begin{equation*}
\begin{tikzpicture}[scale=1.5][baseline= (a).base]
\node[scale=0.7] (a) at (0,0){
 \begin{tikzcd}
\bullet&  & \bullet  &  & \bullet  &  & \bullet  &  & \bullet  &  & \bullet  &  & \bullet \\
 & \bullet\ar[ur]\ar[ul]  &   & \bullet\ar[ur]\ar[ul] &  & \bullet\ar[ur]\ar[ul] &   &\bullet\ar[ur]\ar[ul]  &   & \bullet\ar[ur]\ar[ul] &   & \bullet\ar[ur]\ar[ul]&
\end{tikzcd}};
\end{tikzpicture}
\label{eq:zigzag}
\end{equation*}
\end{center}
there are no translations other than the identity.
To see why, consider the following smaller poset, where we've labelled the elements for concreteness:
\begin{center}
\begin{equation*}
\begin{tikzpicture}[scale=1.5][baseline= (a).base]
\node[scale=1] (a) at (0,0){
 \begin{tikzcd}
 a  &  & c \\
 & b \ar[ur]\ar[ul]  &
\end{tikzcd}};
\end{tikzpicture}
\end{equation*}
\end{center}
The condition that $p\leq T(p)$ for all $p$, implies that $T(a)=a$ and $T(c)=c$.
This appears to give some choice as to where to send $b$, but monotonicity rules this out.
Indeed if $T(b)=c$, then the implication $b \leq a \Rightarrow T(b) \leq T(a)$ is contradicted because $T(b)=c\nleq a=T(a)$.

However, the above types of posets cannot be ignored.
Topological data analysis is replete with examples where functors modeled on posets (also called $\pP$-modules) are of the above form.
These examples include, but are not limited to, zig-zag persistence~\cite{Carlsson2009a,Carlsson2010,Milosavljevic2011,botnan2018algebraic}, Reeb graphs~\cite{deSilva2016}, circle-valued persistence~\cite{burghelea2013topological} and cellular sheaf theory
~\cite{Curry2014, Curry2015a, Munch2016}.
Consequently, to calculate distances between these algebraic-topological summaries of parametrized data, a theory of interleavings suited to zig-zags as well as more general posets is necessary.

The solution that we advance in this paper is that posets such as the above zig-zag poset naturally embed into larger posets that \emph{do} admit non-trivial translations, where the theory of~\cite{Bubenik2014a} can be applied.
Such a solution was already outlined in~\cite{botnan2018algebraic}, for the special case of zig-zag posets, by embedding such posets into $\R^2$, but here we motivate the development of a more general \emph{intrinsic} interleaving theory that uses a thickening structure on the lattice of down sets $\Down(\pP)$ to serve as our poset with non-trivial translations.
This motivation and theory is the content of \cref{sec:philosophical-interleavings}.

Moreover, for the sake of theoretical clarity, we have decided to build up a general \emph{relative theory of interleavings}, which allows us to define an interleaving theory over $\pP$, as long as we have a map of posets $f\colon\pP\to\pQ$ where $\pQ$ has a super-linear family of translations.
This is done by using the operations of pushforward and pullback, which we need to define how to \emph{shift} modules over $\pP$ (\cref{defn:shift-relative-to-f}).
This allows us to define relative interleavings (\cref{defn:relative-weak-interleaving}), which in turn leads to subtle question: Does our notion of distance differ from the distance gotten by pushing forward to $\pQ$ and working entirely with interleavings over $\pQ$? Our first main result, \cref{thm:extend-restrict-interleavings}, shows that it does not: the pushforward operation $f_*$ is an isometry onto its image.

The relative theory of interleavings we introduce has the added benefit of providing a general framework for \emph{approximating} functors modeled on a poset and performing interleaving \emph{inference}.
We do this by picking up one of the original ideas from~\cite{Chazal2009b} of discretizing---or \emph{pixelizing}---persistence modules by viewing $\pP$ as a ``discrete'' poset that is embedded, via $f$, inside of a ``continuous'' poset $\pQ$.
For us (and~\cite{brown2019probabilistic}) the pixelization is simply the iteration of the pullback and pushforward operations, i.e.~$f_*f^*M$. However we show that, in fact, there are two notions of pixelization that one can consider.
By using a simple triangle inequality argument (\cref{lem:interleaving-distortion}), we show that one can infer the interleaving distance over the poset $\pQ$ by using interleavings over $\pP$ as long as one can bound the difference between a module and its pixelization; which is provided by our $\delta$-approximation condition in \cref{defn:delta-approx}.
Our relative interleaving theory, which takes a nicer form when we assume that $\pP=\pL$ is a lattice, fits into the distortion result of \cref{thm:delta-distortion}: When $f\colon\pL \to \pQ$ is a $\delta$-approximation, the pullback operation $f^*$ distorts interleaving distances by at most $2\delta$.

The theory of lattices is especially well suited to studying open sets in a topological space and we conclude the paper in \cref{sec:mapper} with an approximation theory for cosheaves, which takes much of its inspiration from~\cite{Munch2016}.
When we put a grid structure on $\R^n$, we show an expected result: when the spacing of the grid is bounded by $\delta$, we can calculate interleavings of cosheaves using discrete offsets of cells instead of continuously growing open sets.
In this sense, our paper provides a theory of numerical analysis for interleavings of cosheaves over $\R^n$ with guaranteed tolerance bounds.

\section{Modules over General Posets}\label{sec:modules}

Much of our inspiration for this paper comes from persistent homology.
The usual story one tells about persistence is that it gives a way of assigning continuous ``shape summaries'' to finite data.
For example, given a finite subset $X$ of $\R^n$, commonly called a \emph{point cloud}, one considers an associated one-parameter family of spaces, which can be thought of as thickenings of the point cloud.
 $$X=\{x_i\}_{i=1}^N\subset \R^n \qquad \text{begets} \qquad  X_r=\cup_{i=1}^N B(x_i,r) \qquad \text{for every} \qquad r\geq 0.$$
Note that for any two radii $r \leq r'$, we have the natural inclusion $X_r \subseteq X_{r'}$ of spaces.
By applying various functorial topological lenses we can study how the topology of the point cloud changes across multiple scales indexed by $r$.
If we use homology with field coefficients as a lens, then we obtain a collection of functors, indexed by integers $i\geq 0$, associated to the point cloud:
$$PH_i(X)\colon (\R,\leq) \to \Vect \qquad r\leq r' \rightsquigarrow H_i(X_r) \to H_i(X_{r'}).$$

The above functors are called \define{persistence modules} in the literature because for every pair of related numbers $r \leq r' \in \R$ we can ask which homological features \emph{persist} along the inclusion $X_r\subseteq X_{r'}$.
We note, just as~\cite{Bubenik2014} does, that such a notion makes sense for \emph{any} partially ordered set $\pP$, making the term ``persistence module'' very general.
We avail ourselves of the following language.

\begin{defn}[$\pP$-modules]\label{defn:P-module}
Let $\pP$ be a partially ordered set.
A functor $M\colon\pP \to \Vect$ is called a \define{$\pP$-module} or a \define{module over $\pP$}.
As a reminder, such a functor can be described concretely as an assigment to each element $p\in\pP$ a vector space $M(p)$ and a linear map $M(p \leq q)\colon M(p) \to M(q)$ for every pair of comparable elements $p\leq q$.
We refer to the maps $M(p\leq q)$ as being ``internal'' to the module $M$.
Reading composition from right to left, we require that the collection of maps internal to the module $M$ satisfy the relation
\[
	M(p\leq r) = M(q\leq r) \circ M(p \leq q) \qquad \forall \quad p\leq q \leq r.
\]

A \define{map of $\pP$-modules} $\psi\colon M \to N$ is simply a natural transformation of functors.
Said concretely, this is a collection of linear maps $\psi(p)\colon M(p) \to N(p)$ that are consistent with the internal maps, i.e.
$$\psi(q)\circ M(p\leq q) = N(p\leq q) \circ \psi(p).$$

With the above definitions in hand, we note that the collection of all $\pP$-modules defines a category, written $\Fun(\pP,\Vect)$ or $\Vect^{\pP}$.
We note that any category $\cat$ can be used instead of $\Vect$ and the same definition and language makes sense.
The category of all $\pP$-modules valued in $\cat$ is written $\Fun(\pP,\cat)$ or $\cat^{\pP}$.
Unless otherwise noted, we will usually assume that $\cat=\Vect$.
\end{defn}

One might wonder just how complicated of a poset $\pP$ or module $M$ might arise in applications. In our next example, we outline a general source for $\pP$-modules.

\begin{ex}[Modules over a Simplicial Complex]\label{ex:modules-over-a-simplicial-complex}
  Any simplicial complex $K$ is a collection of subsets of a vertex set $V$ that is closed under restriction.
  Consequently, any simplicial complex $K$ has the structure of a poset, where $\sigma\leq \tau$ if and only if the vertices of $\sigma$ are a subset of the vertices of $\tau$.
  Let $\Cell(K):=K^{\op}$ be the opposite poset of $K$.

  The reason we consider the opposite poset of $K$ is that there is a monotone map $\Cell(K) \to \Open(|K|)$ that takes any simplex $\sigma$ to its open star $\st |\sigma|$ in the geometric realization of $K$, written $|K|$.
  Recall that the open star is the union of the interiors of geometrically realized simplices whose face includes $\sigma$, i.e.
  \[
    \st |\sigma|=\cup_{\sigma \leq_K \tau} |\tau|.
  \]
  Notice that if $\sigma \leq \tau$ in $K$ then $\st |\tau| \subseteq \st |\sigma|$, so this map really is order-reversing when viewed from $K$.

  The following setup now occurs quite naturally: Given any topological space $X$ and a map $f\colon X \to |K|$ we naturally get a collection of $\Cell(K)$-modules
  \[
  F_i \colon \Cell(X) \to \Vect \qquad \text{where} \qquad \sigma \quad \squigrightarrow \quad H_i(f^{-1}(\st\,|\sigma|))
  \]
  called the \define{cellular Leray cosheaves} associated to $f$.
  The cellular Leray sheaves are defined dually by using cohomology instead of homology.
\end{ex}

\subsection{Up Sets, Down Sets and Open Sets}

The following notions from the theory of partially ordered sets will be essential.

\begin{defn}
  \label{defn:downset}
  Given a poset $(\pP,\leq)$, a subset $D\subseteq \pP$ is called a \define{down set} if the following implication holds
  \[
    \text{if} \qquad q\in D \subseteq \pP \qquad \text{and} \qquad p\leq q \qquad \text{then} \qquad p\in D.
  \]
  A \define{principal down set} is any down set of the form $D_p=\{q \in\pP \mid q\leq p\}$.
  The collection of all down sets in a poset $\pP$ is denoted by $\Down(\pP)$.
  We note that associated to any subset $S\subseteq \pP$ there is an associated down set that it generates by considering $\DD(S)=\cup_{p\in S} D_p$.

  We can, of course, dualize the above notions. We say $U\subseteq \pP$ is an \define{up set} if whenever $p\in U$ and $p\leq q$, then $q\in U$.
  Principal up sets, the up set $\UU(S)$ associated to a subset $S$ and the collection of all up sets $\Up(\pP)$ are all defined analogously.
\end{defn}

We now provide some examples of down sets and up sets, which expand upon~\cref{ex:modules-over-a-simplicial-complex}.

\begin{ex}
  Associated to any set $V$ is the collection of all its subsets, written $P(V)$.
  Notice that $P(V)$ has the structure of a poset given by containment of subsets, i.e. $\sigma\leq \tau$ if and only if $\sigma\subseteq \tau$.
  A simplicial complex $K$ on $V$ is nothing more nor less than a down set in $P(V)$.
  The ``closed under restriction'' condition of a simplicial complex is exactly the down set condition.
  The complete simplex on $V$ is then an example of a principal down set.
  The set of simplices in $K$ that are incident to a simplex $
  \sigma$ forms a principal up set in $K$.
\end{ex}

One of the reasons for considering down sets and up sets is that they naturally form a topology on any poset $\pP$.
In fact, order-preserving maps between posets are exactly the maps that are continuous with respect to this topology.

\begin{defn}\label{defn:Alex-topology}
The \define{Alexandrov topology} on a poset $\pP$ is the topology where open sets are down sets.
The principal down sets $\{D_p\}$ serve as a basis for this topology.
\end{defn}

Of course, one could just as well use up sets to define a topology on $\pP$.
The only reason to prefer one over the other is that the map
\[
  \iota\colon \pP \hookrightarrow \Down(\pP) \qquad p \mapsto \iota(p) := D_p
\]
is order preserving, whereas the corresponding map for up sets is order reversing.
Consequently, every poset $\pP$ embeds into the lattice\footnote{Actually, the poset of open sets forms a \define{spatial frame}. A \define{frame} is a lattice that satisfies the infinite distributive property.} $\Down(\pP)$, where the meet and join operations correspond to the intersection and union operations for open sets, respectively.
The interested reader should be aware that this observation is the jumping off point for several interesting duality theorems: Birkhoff's theorem\cite{birkhoff1937}, Priestley's Theorem~\cite{priestley1970}, and Stone duality~\cite{TaylorP:fofct}.

One of the main ways in which we sidestep the difficulties of~\cite{Bubenik2014a} is to work with translations over $\Down(\pP)$, which can be non-trivial and lead to an interesting theory of interleavings over zig-zag and other type posets.
This requires that we be able to take modules defined over $\pP$ and extend them to modules over $\Down(\pP)$.
This is the subject of the next section.

\subsection{Pushforward and Pullback of Modules Over Posets}

For all of this paper $f\colon\pP \to \pQ$ will denote a map of posets.
A map of posets is always assumed to be order-preserving, which is another way of saying monotone. In other words, the following implication is true:
\[
  \text{If} \qquad p\leq_{\pP} q \qquad \text{then} \qquad f(p) \leq_{\pQ} f(q).
\]
The reader should convince themselves that when viewing $\pP$ and $\pQ$ as categories, the monotonicity condition implies that the set-theoretic map of posets $f$ defines a functor.
This is because whenever two elements in $\pP$ have a morphism between them, it is guaranteed to be sent to a morphism in $\pQ$.
Being able to pass between order-theoretic and category-theoretic notions will be a very useful skill for reading this paper, so the reader should be on alert for these equivalent ways of saying somethings.

\begin{defn}[Pullback]\label{defn:pullback}
Given any map of posets $f\colon\pP \to \pQ$ and a $\pQ$-module $M$, one obtains a $\pP$-module $f^*M$ called the \define{pullback of $M$ along $f$}, which is defined pointwise by the equation $f^*M(p)=M(f(p))$.

This defines a $\pP$-module because if $f$ is a map of posets, then whenever $p\leq_{\pP} q$ then $f(p) \leq_{\pQ} f(q)$ and we can ``borrow'' the internal map of $M$ there, i.e.~$M(f(p)) \to M(f(q))$ specifies the internal map $f^*M(p) \to f^*M(q)$.

Said using category theory, the pullback is simply the composition of functors that fits into the following diagram:
\[
\xymatrix{\pP \ar[r]^f \ar[rd]_{f^*M} & \pQ \ar[d]^M \\
			& \cat .}
\]
We note the following special case: when $f$ is an injection, written $f\colon\pP \hookrightarrow \pQ$, we say that $f^*M$ is the \define{restriction} of $M$ along $f$.
\end{defn}

For an example of pullbacks in action, we continue~\cref{ex:modules-over-a-simplicial-complex}.

\begin{ex}\label{ex:subdivision-pullback}
  Suppose $K$ is an abstract simplicial complex and $|K|$ is its geometric realization.
  A \define{refinement} of $(|K|,K)$ is another simplicial complex $(|K'|,K')$ with a homeomorphism $\varphi\colon |K'| \to |K|$ where every simplex in $|\sigma|\subseteq |K|$ is the union of images of simplices in $|K'|$, i.e.~$|\sigma|=\cup \varphi(|\sigma_i'|)$.
  This property along with continuity of $\varphi$ implies that $\varphi$ defines a surjective map of posets $\varphi\colon K' \to K$, where each abstract simplex in $K'$ is sent to the simplex in $K$ that it refines.

  Now suppose $F\colon\Cell(K) \to \Vect$ is a cellular cosheaf.
  By precomposing with the (dualized) map of posets $\varphi\colon (K')^{\op}\to K^{\op}=:\Cell(K)$, we obtain a new cellular cosheaf $F'=\varphi^*F\colon\Cell(K')\to\Vect$ defined over $K'$.
\end{ex}

Now we consider the dual problem.
If $f\colon\pP \to \pQ$ is a map of posets and $M$ is a $\pP$-module then we can \emph{pushforward} $M$ to produce a $\pQ$-module $f_*M$.

\begin{defn}[Pushforward]\label{defn:pushforward}
Let $f\colon \pP \to \pQ$ be a map of posets.
Given a $\pP$-module $M$, one can define the \define{pushforward of $M$ along $f$}, written $f_*M$, via the pointwise formula for the \emph{left Kan extension} of $M$ along $f$. See \cref{defn:pointwise-left-Kan} for more detail.
\[
f_* M := \Lan_f M \qquad \text{where} \qquad f_{*} M (q) := \varinjlim_{p \mid f(p)\leq q} M(p).
\]
\end{defn}

One thing to note is that when evaluating the colimit $f_{*} M (q)$ used in \cref{defn:pushforward} the module $M$ is being summarized along a down set.
In particular if $f(p)\leq q$ and $p'\leq p$, then by the definition of a poset map $f(p')\leq q$ as well.

\begin{ex}\label{ex:kan-extend-integers}
  Suppose we have the integers as a subposet of the real numbers, i.e.
  \[
    j\colon \Z \hookrightarrow \R \qquad \text{where} \qquad j(n)=n
  \]
  We can use~\cref{defn:pushforward} to extend any $\Z$-module $M$ to an $\R$-module $j_*M$.
  Following the formula given there we see that
  \[
    j_*M (r) := \varinjlim_{n \mid n \leq r} M(n) = M(\lfloor r \rfloor).
  \]
  In other words, the formula pushforward does what's expected: it rounds down to the largest integer less than $r$ and uses the value of $M$ there.
\end{ex}

\cref{ex:kan-extend-integers} gives rise to a natural question: ``Is there an operation that rounds up, rather than rounds down?''
The answer is yes, and it is handled by another operation.

\begin{defn}\label{defn:pushforward-open-supports}
  Let $f\colon \pP \to \pQ$ be a map of posets.
  Given a $\pP$-module $M$, one can define the \define{pushforward with open supports} of $M$ along $f$, written $f_{\dagger}M$, via the pointwise formula for the \emph{right Kan extension} of $M$ along $f$.
  \[
  f_{\dagger} M := \Ran_f M \qquad \text{where} \qquad f_{\dagger} M (q) := \varprojlim_{p \mid f(p)\geq q} M(p).
  \]
\end{defn}

One of the major themes of this paper is the iteration of a pullback and then a pushforward (of both types) of a module to ``approximate'' or ``pixelate'' a module.
The term ``pixelization'' first appeared in~\cite{Chazal2009b}, but it was not defined using the general notions of pullback and pushforward given here.

\begin{defn}[Pixelization of $M$]\label{defn:pixelization-module}
  Suppose $f\colon\pP \to \pQ$ is a map of posets and $M$ is a $\pQ$-module.
  The \define{(lower) pixelization of $M$ by $f$} is the $\pQ$-module obtained by pulling back and then pushing forward along $f$, written $f_*f^*M$.
  Dually, the \define{upper pixelization of $M$ by $f$} is the $\pQ$-module obtained by pulling back and then applying the pushforward with open supports $f_{\dagger}f^* M$.
  If the ``lower'' or ``upper'' modifier is not clearly stated, the lower pixelization should be assumed, unless it is clear from context.
\end{defn}

\begin{rmk}[Which Pixelization?]\label{rmk:which_pixelization}
  At this point we are faced with two, apparently equally good, choices for the pixelization of a $\pQ$-module along $f$.
  We'll bias our presentation at the beginning towards the lower pixelization.
  In a sense the lower pixelization is the choice that is most compatible with thinking of modules over posets as cosheaves on the down set topology.
  Additionally, the lower pixelization interacts best with standard choices for defining a weak interleaving of modules.
\end{rmk}

The following example hopefully justifies the word choice behind ``pixelization,'' which is most clearly understood in the case where $f$ is the inclusion of a subposet in $\pQ$.
The pullback along $f$ samples the $\pQ$-module $M$ at points in $\pP$.
The pushforwad then extends the values at these sampled points to regions not in the image of $f$.

\begin{ex}[Sampling of a Persistence Module]\label{ex:pixelized-R-module}
  Consider the sampling of an $\R$-module $M\colon\R \to \Vect$ at points regularly spaced at a distance $\delta$ apart.
  One way to do this is to consider the map
  \[
    d\colon \Z \hookrightarrow \R \qquad \text{where} \qquad d(n)=\delta n.
  \]
  Combining the formulas in \cref{defn:pullback} and \cref{defn:pushforward} gives
  \[
    d_*d^*M (r) := \varinjlim_{n \mid \delta n \leq r} M(\delta n) \cong M(\delta \lfloor \frac{r}{\delta} \rfloor)
  \]
  The above formula says that $d_*d^* M$ is the $\R$-module that is constant on each half-open interval of the form $[\delta n, \delta (n+1))$ with the value $M(\delta n)$, where $n=\lfloor \frac{r}{\delta} \rfloor$.
  The ``pixels'' of this pixelization are the half-open intervals $[\delta n,\delta (n+1))$.
  Dually, the ``pixels'' of the upper pixelization go the other way and are of the form $(\delta n,\delta (n+1)]$.
\end{ex}

\begin{figure}[h]
  \includegraphics[width = .45\textwidth]{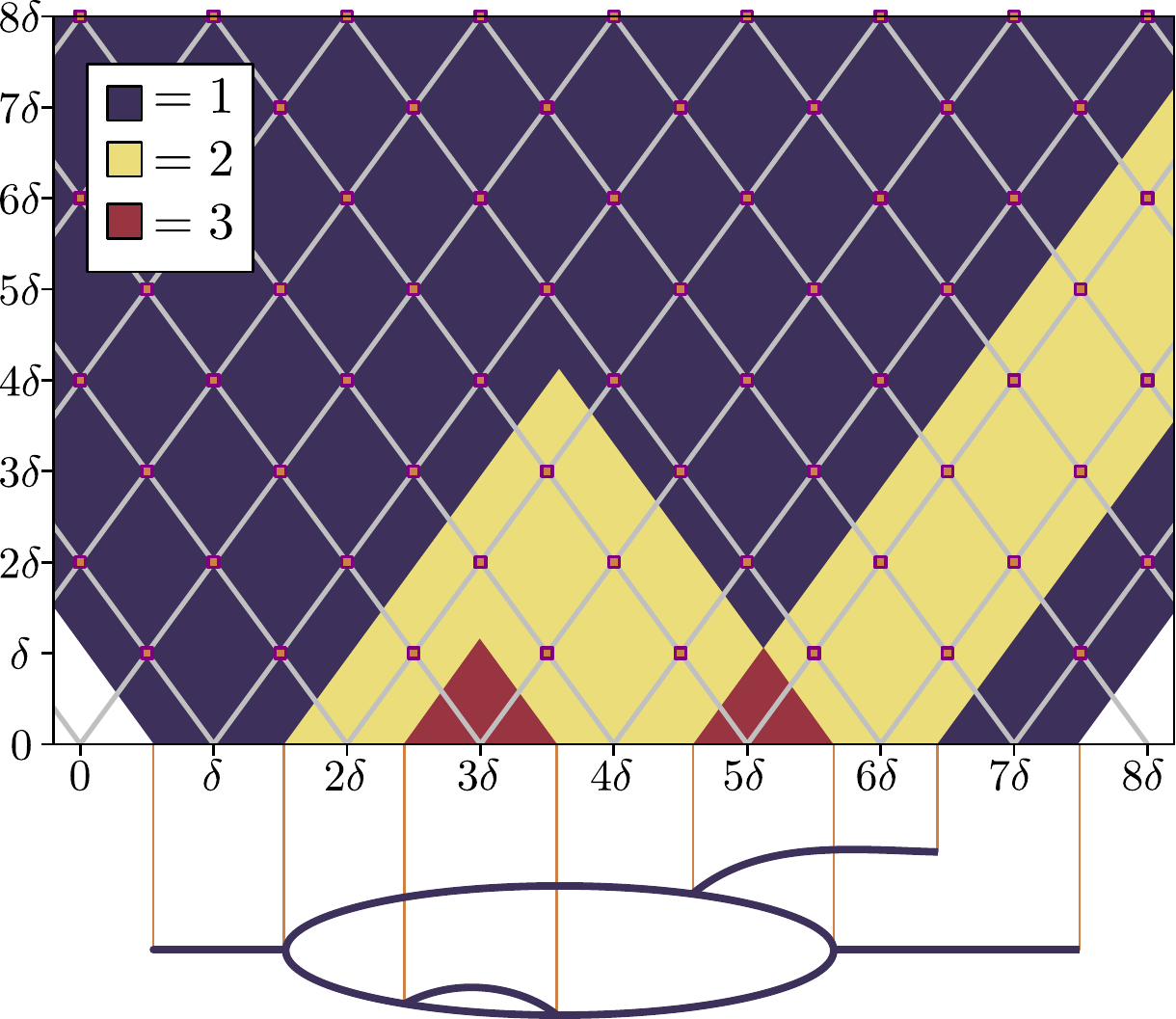}
  \includegraphics[width = .45\textwidth]{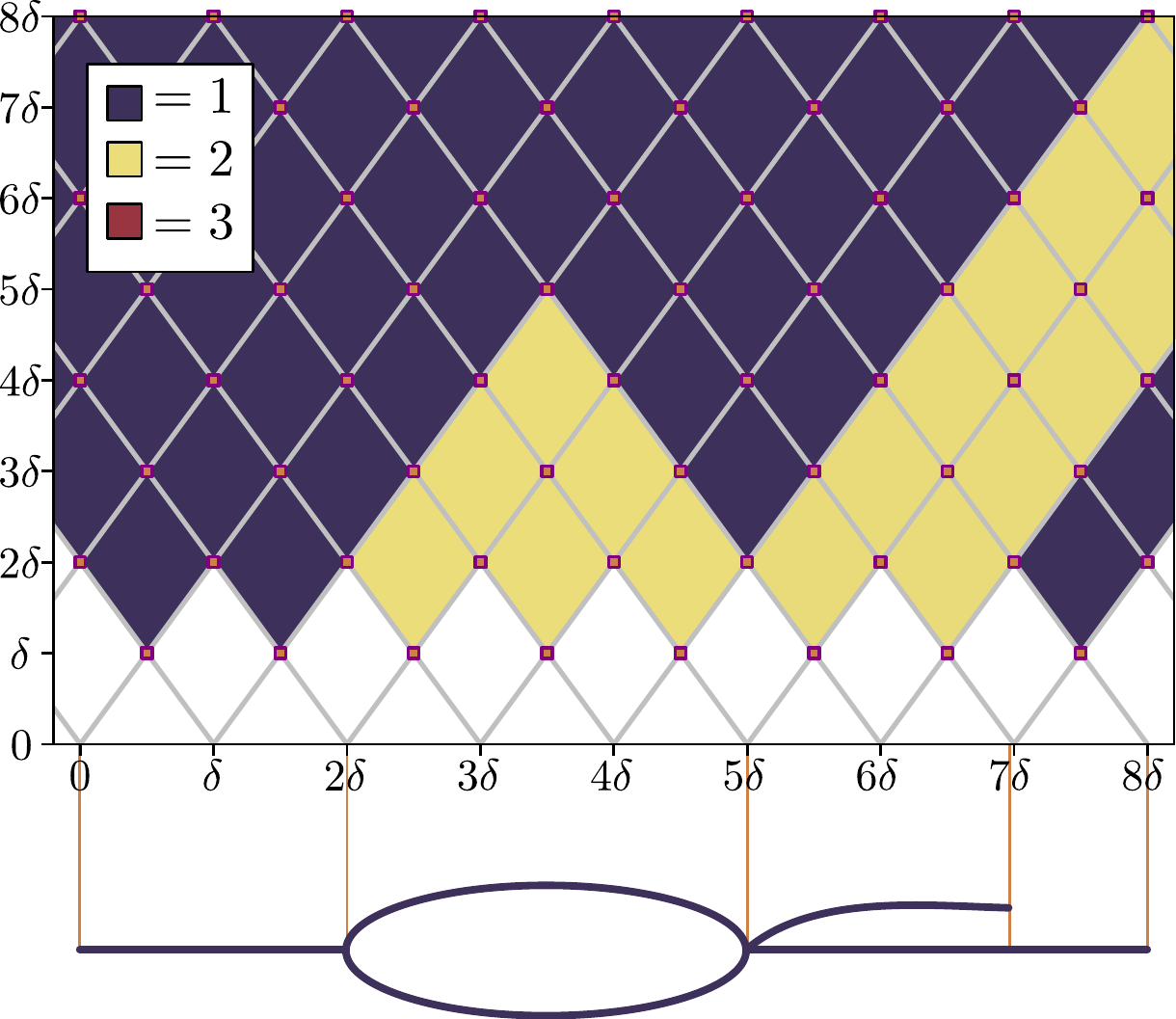}
  \caption{
  An example of the relationship between a Reeb cosheaf $M$, and the lower pixelization.
  For the Reeb graph shown below, color in the left figure shows the size of the set $M(U)$ for a given open interval $U$ drawn in the center-diameter plane that parameterizes open intervals in $\R$. White denotes the empty set.
  The pullback $f^*M$ to $\GridO$ can be read off of the color below the square grid elements.
  The pushforward of the pullback, $f_*f^*M$, is shown in the right figure.
  Note that the the resulting construction is only a precosheaf and not a cosheaf. By ``slicing'' at $2\delta$ we can read off the pixelated Reeb graph drawn below the right figure.
  }
  \label{fig:ReebToMapperExample}
\end{figure}

We now consider an example pertinent to the study of Reeb graphs.

\begin{ex}[Sampling a Reeb Graph]\label{ex:Reeb-graph}
  Let $\pQ=\Int(\R)$ be the poset of open intervals in $\R$.
  Let $\pP=\GridO$ be the subposet of open intervals of the form $(k_1\cdot \delta,k_2\cdot \delta)$ for $k_1<k_2$ integers.
  Given a space $\topX$ and a map $\pi\colon \topX\to\R$, one gets an $\Int(\R)$-module by assigning to every interval $(a,b)$ the set of path components $\pi_0(\pi^{-1}\left((a,b)\right))$ of the pre-image. This is the \define{Reeb cosheaf} associated to an $\R$-space, as defined in~\cite{deSilva2016}.

  In \cref{fig:ReebToMapperExample} we consider a graph along with a projection map to $\R$, which is an example of a Reeb graph.
  We can visualize the Reeb cosheaf $M$ of this Reeb graph by taking each point in the upper half plane to specify the midpoint and diameter of each open interval.
  We use color to indicate the cardinality of the set of path components of the pre-image of each interval in $\R$.
  If $f\colon \GridO \hookrightarrow \Int(\R)$ is the inclusion, then we can read off the lower pixelization of $M$ from the diagram to the right.
  Although this pixelization is not a cosheaf and thus does not determine a Reeb graph a priori, if one ``slices'' the center-diameter plane at $2\delta$ and makes this our new $x$-axis, is does determine a unique cosheaf and hence a new Reeb graph.
\end{ex}

A natural question to ask is ``What is the relationship between the $\R$-module $M$ and its pixelization?''
The functorial answer is that there is a natural map \emph{from} the lower pixelization to the original module.
To see why this is the case, consider
\cref{ex:pixelized-R-module}.
If $r$ is some value between $[\delta n, \delta (n+1))$, then since $\delta n$ is less than $r$, the internal morphism $M(\delta n)\to M(r)$ provides a natural choice of morphism from $d_*d^*M(r) \to M(r)$.
In fact these morphisms collate to form a natural transformation of functors $d_*d^*M \to M$, which is the same thing as a map of $\R$-modules.
Dually, there is a map from the original module \emph{to} its upper pixelization
The following lemma generalizes this observation and is completely standard.

\begin{lem}\label{lem:push-pull-unit-counit}
  Let $f\colon\pP \to \pQ$ be a map of posets and let $f^*\colon\Fun(\pQ,\cat) \to \Fun(\pP,\cat)$ be the pullback and $f_*\colon\Fun(\pP,\cat) \to \Fun(\pQ,\cat)$ be the pushforward functors defined in \cref{defn:pullback} and \cref{defn:pushforward}.
  The pair of functors $(f_*,f^*)$ form an adjoint pair, specifically $f_*$ is \define{left adjoint} to $f^*$.

  The statement that $(f_*,f^*)$ form an adjoint pair is really the statement that
  there are natural transformations
  \[
    \upsilon \colon \id_{\cat^{\pP}} \Rightarrow f^*f_* \qquad \text{and} \qquad \chi \colon f_*f^* \Rightarrow \id_{\cat^{\pQ}}
  \]
  called the \define{unit} and the \define{co-unit} of the adjunction and that
  the unit and counit further satisfy the following \define{triangle identities}, which is explained in further detail in \cref{rmk:triangle-identities}.
  \[
    \id_{f_*} = \chi f_* \circ f_*\upsilon \qquad \text{ and } \qquad \id_{f^*} = f^*\chi \circ \upsilon f^*.
  \]

  Dually, we have that $(f^*,f_{\dagger})$ form an adjoint pair, where the adjunction goes in the opposite direction.
  We reserve the terms $\chi$ and $\upsilon$ for the co-unit and unit of the adjunction, but now decorate these letters with the $\dagger$ symbol to indicate that $f_{\dagger}$ is involved.
  \[
    \chi^{\dagger} \colon f^*f_{\dagger} \Rightarrow \id_{\cat^{\pP}} \qquad \text{and} \qquad \upsilon^{\dagger}\colon\id_{\cat^{\pQ}} \Rightarrow f_{\dagger}f^*
  \]
\end{lem}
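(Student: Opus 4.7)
The plan is to derive both adjunctions from the universal property of pointwise Kan extensions, then read off the unit and counit from that universal property; the triangle identities then follow mechanically from the uniqueness clause of the universal property. I would treat the pair $(f_*, f^*)$ in detail and obtain $(f^*, f_\dagger)$ by formal duality (replacing colimits by limits and reversing arrows in the indexing categories).

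First I would construct the counit $\chi\colon f_*f^*M \Rightarrow \id_{\cat^{\pQ}}$ pointwise. Fix $q \in \pQ$. By definition,
\[
  f_*f^*M(q) = \varinjlim_{p\,\mid\,f(p)\leq q} M(f(p)).
\]
For each $p$ in the index set, the relation $f(p)\leq q$ provides an internal map $M(f(p))\to M(q)$, and these maps form a cocone over the diagram: if $p'\leq p$ in $\pP$ then $f(p')\leq f(p)$ in $\pQ$, and composition of internal maps in $M$ makes the triangle commute. The universal property of the colimit therefore produces a unique map $\chi_q$. Naturality in $q$ is again by uniqueness, since both composites in the naturality square factor a common cocone.

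Next I would construct the unit $\upsilon\colon \id_{\cat^{\pP}}\Rightarrow f^*f_*$. For $N\in\cat^{\pP}$ and $p\in\pP$, we have
\[
  f^*f_*N(p) \;=\; f_*N(f(p)) \;=\; \varinjlim_{p'\,\mid\,f(p')\leq f(p)} N(p').
\]
Since $f(p)\leq f(p)$, the element $p$ itself belongs to the indexing set, and $\upsilon_p$ is defined as the canonical structure map $N(p)\to f_*N(f(p))$ into the colimit. Naturality in $p$ follows because for $p\leq q$ the relation $f(q)\leq f(q)$ factors through $f(p)\leq f(q)$, so the two candidate maps into $f^*f_*N(q)$ agree by the compatibility constraints defining the colimit.

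Finally, I would check the two triangle identities $\id_{f_*}=\chi f_*\circ f_*\upsilon$ and $\id_{f^*}=f^*\chi\circ \upsilon f^*$. Each side is a morphism from (respectively into) a colimit, and it suffices to verify the identities on each structure map of the colimit. For the first identity one checks that, for each $p$ with $f(p)\leq q$, both sides reduce to the structure map $N(p)\to f_*N(q)$, and uniqueness of the mediating morphism closes the argument; the second identity is similar. The pair $(f^*, f_\dagger)$ is handled by the categorically dual argument, obtained by replacing $\varinjlim$ with $\varprojlim$ and reversing the indexing conditions from $f(p)\leq q$ to $f(p)\geq q$. The main obstacle is purely bookkeeping: keeping the variance straight between $\pP$ and $\pQ$ and confirming that the index categories $\{p\mid f(p)\leq q\}$ remain down sets of $\pP$ so that $M$ restricts to a genuine diagram. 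Once this is set up correctly, every statement reduces to the universal property of (co)limits, and no further input is needed.
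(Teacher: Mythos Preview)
Your proposal is correct and follows essentially the same approach as the paper. The paper's in-text proof simply cites Proposition~6.1.5 of Riehl's \emph{Category Theory in Context} and defers to an appendix for a ``plausibility argument''; that appendix constructs $\upsilon$ and $\chi$ exactly as you do (structure map into the colimit for the unit, cocone-induced map out of the colimit for the counit) but does not even sketch the triangle identities, so your write-up is in fact more complete than what appears in the paper.
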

\begin{proof}
  This is Proposition 6.1.5 of~\cite{riehl2017category}, but one can refer to \cref{app:lem:push-pull-unit-counit} in the Appendix for a plausibility argument.
\end{proof}

The triangle identities are going to be used in the proof of \cref{lem:full-restricts-interleavings}, which is half of our main result, \cref{thm:extend-restrict-interleavings}.

\begin{rmk}[The Triangle Identities]\label{rmk:triangle-identities}
  Interpreting the triangle identities above can be somewhat tedious, so we spell out exactly their meaning here.

  The equation $\id_{f_*} = \chi f_* \circ f_*\upsilon$ means that for every $\pP$-module $M$ we have that pushing forward the unit map $\upsilon_M\colon M \to f^*f_*M$ along $f$, i.e. the map $f_*\upsilon_M\colon f_*M \to f_*f^*f_*M$, serves as the unique inverse to the counit map on $f_*M$, i.e. $\chi_{f_*M}\colon f_*f^*f_*M \to f_*M$. In other words we have the following commutative triangle:
  \[
    \begin{tikzcd}
      f_* M \ar[r,"f_*\upsilon_M"] \ar[rd,"\id_{f_*M}"'] & f_*f^*f_* M \ar[d,"\chi_{f_*M}"] \\
            & f_*M
    \end{tikzcd}
  \]

  The equation $\id_{f^*} = f^*\chi \circ \upsilon f^*$ means that for every $\pQ$-module $N$ we have that pulling back the counit map $\chi_N \colon f_*f^* N \to N$ along $f^*$, i.e. the map $f^*\chi_N\colon f^*f_*f^* N \to f^* N$, serves as the unique inverse to the unit map on $f^*N$, i.e. $\upsilon_{f^*N} \colon f^*N \to f^*f_*f^*N$.
  \[
    \begin{tikzcd}
      f^*f_*f^* N \ar[rd,"\id_{f^*f_*f^* N}"] \ar[d,"f^*\chi_N"'] & \\
          f^*N \ar[r,"\upsilon_{f^*N}"'] & f^*f_*f^* N
    \end{tikzcd}
  \]
\end{rmk}

We now note an in important special case of this adjunction when $f$ is a full and faithful map of posets.
Recall that a functor $F\colon\cat \to \dat$ is \define{faithful} if for every pair of objects $c,c'\in\cat$, the map on Hom sets
\[
  F(c,c') \colon \text{Hom}_{\cat}(c,c') \to \text{Hom}_{\dat}(F(c),F(c'))
\]
is injective.
The functor $F$ is \define{full} if the above map is surjective.
Notice that when we view the posets $\pP$ and $\pQ$ as categories, where there is at most one morphism between any pair of objects, then a map of posets $f\colon\pP \to \pQ$ is automatically a faithful functor.
To say that the map $f\colon\pP \to \pQ$ is full \emph{and} faithful is to say that the following if and only if condition holds:
\[
p\leq p' \Leftrightarrow f(p) \leq f(p').
\]
Notice that the full condition also implies that the map $f$ is an injection.

\begin{prop}
  If $f\colon\pP \to \pQ$ is a full and faithful map of posets, then $f$ is an injection, which we write as $f\colon\pP\hookrightarrow\pQ$.
\end{prop}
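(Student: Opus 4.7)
The plan is to unwind the definition of full and faithful in the poset setting and then exploit antisymmetry of the partial order on $\pP$. As the excerpt already notes, when $\pP$ and $\pQ$ are viewed as categories, the Hom sets have at most one element, so faithfulness is automatic, and fullness amounts to the converse implication $f(p) \leq f(p') \Rightarrow p \leq p'$. Combined with monotonicity of $f$, this gives the biconditional $p \leq p' \Leftrightarrow f(p) \leq f(p')$ that the excerpt highlights just before the proposition.

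Given this biconditional, the plan is to assume $f(p) = f(p')$ for arbitrary $p, p' \in \pP$ and derive $p = p'$. From $f(p) = f(p')$ we get both $f(p) \leq f(p')$ and $f(p') \leq f(p)$ in $\pQ$. Applying fullness to each of these inequalities produces $p \leq p'$ and $p' \leq p$ in $\pP$. Antisymmetry of the partial order on $\pP$ then forces $p = p'$, so $f$ is injective.

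I don't expect any real obstacle: the whole content is the translation between the categorical notion of a fully faithful functor and the order-theoretic notion of an order-embedding. The only subtlety worth calling out explicitly in the writeup is that antisymmetry is genuinely being used here — it is what distinguishes posets from preorders, and without it the statement would fail (a fully faithful functor between preorders need not be injective on objects). The proof itself will be two or three lines.
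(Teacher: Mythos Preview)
Your proposal is correct and matches the paper's own proof essentially line for line: assume $f(p)=f(p')$, deduce $f(p)\leq f(p')$ and $f(p')\leq f(p)$, apply fullness to get $p\leq p'$ and $p'\leq p$, and conclude $p=p'$ by antisymmetry. Your explicit remark that antisymmetry is what makes this work (and that the statement fails for preorders) is a nice addition not present in the paper's version.
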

\begin{proof}
  The condition that $f(p)=f(p')$ in a poset is equivalent to the statement that $f(p)\leq f(p')$ and $f(p')\leq f(p)$.
  The full condition of $f$ then implies that $p\leq p'$ and $p'\leq p$ and hence $p=p'$.
\end{proof}

\begin{rmk}
  Although being full and faithful implies that $f$ is an injection, the converse is not true.
  If one takes a discrete poset on $\{a,b\}$ with no relations to be $\pP$ and then one adds the relation $a\leq b$ in order to define $\pQ$, then the inclusion is not full.
\end{rmk}

We now describe the content of~\cite[p.~239, Cor.~3]{MacLane1978} in the special case of a map of posets.

\begin{lem}\label{cor:push-pull-push-iso}
  If $f\colon\pP \hookrightarrow \pQ$ is a full and faithful map of posets, then the unit
  \[
    \upsilon\colon \id_{\cat^{\pP}} \Rightarrow f^*f_*
  \]
  is a natural isomorphism. Dually, for a full and faithful map of posets, the co-unit of the adjunction involving $f_{\dagger}$
  \[
    \chi^{\dagger}\colon f^*f_{\dagger} \Rightarrow \id_{\cat^{\pP}}
  \]
  is a natural isomorphism.
\end{lem}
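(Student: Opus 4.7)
The plan is to prove the unit is a natural isomorphism by directly computing the pointwise formula for $f^*f_*M$ at an arbitrary element $p \in \pP$ and showing that the indexing diagram for the defining colimit has $p$ as a terminal object, whence the structure map out of $M(p)$ into this colimit is an isomorphism. The dual statement for $\chi^{\dagger}$ then follows from the dual argument with a limit in place of a colimit.

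In detail, fix a $\pP$-module $M$ and an element $p \in \pP$. By \cref{defn:pullback} and \cref{defn:pushforward} we have
\[
f^*f_*M(p) \;=\; f_*M(f(p)) \;=\; \varinjlim_{p' \mid f(p') \leq f(p)} M(p').
\]
Here is where the full and faithful hypothesis does all the work: the condition $f(p') \leq_{\pQ} f(p)$ is equivalent to $p' \leq_{\pP} p$. Faithfulness is automatic for poset maps, but fullness is what promotes the implication $p' \leq p \Rightarrow f(p') \leq f(p)$ to an equivalence. Consequently the indexing poset for the colimit is exactly the principal down set $D_p = \{p' \in \pP \mid p' \leq p\}$.

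Now $D_p$ has $p$ itself as a (unique) terminal object, so the diagram $D_p \to \cat$ given by the restriction of $M$ has a colimit canonically isomorphic to $M(p)$ via the structure map associated to $p \in D_p$. A direct check against the construction of the unit of the $(f_*,f^*)$-adjunction given in \cref{lem:push-pull-unit-counit} shows that the component $\upsilon_M(p)\colon M(p) \to f^*f_*M(p)$ is precisely this structure map into the colimit. Hence $\upsilon_M(p)$ is an isomorphism for every $p$, and naturality in $M$ was already established in \cref{lem:push-pull-unit-counit}.

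For the dual statement, we compute
\[
f^*f_{\dagger}M(p) \;=\; f_{\dagger}M(f(p)) \;=\; \varprojlim_{p' \mid f(p') \geq f(p)} M(p'),
\]
and the full and faithful assumption again rewrites the indexing set as the principal up set $U_p = \{p' \in \pP \mid p' \geq p\}$, which now has $p$ as an initial object. The limit is therefore canonically $M(p)$, and the counit component $\chi^{\dagger}_M(p)\colon f^*f_{\dagger}M(p) \to M(p)$ identifies with the projection out of the limit indexed by $p \in U_p$, which is an isomorphism. There is no substantive obstacle here beyond correctly identifying the structure maps $\upsilon_M$ and $\chi^{\dagger}_M$ with the canonical (co)limit maps; this identification follows directly from the universal-property construction of the (co)unit for Kan-extension adjunctions.
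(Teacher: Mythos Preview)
Your argument is correct. The pointwise computation is exactly the right thing to do here: fullness and faithfulness of $f$ force the comma category $(f\downarrow f(p))$ to coincide with the principal down set $D_p$, which has $p$ as a terminal object, so the colimit collapses to $M(p)$ and the unit component at $p$ is the canonical structure map, hence an isomorphism. The dual argument with $U_p$ and initial objects is equally clean.

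The only difference from the paper is that the paper does not supply its own proof at all: the lemma is stated as a specialization of \cite[p.~239, Cor.~3]{MacLane1978} and left at that. What you have written is precisely the unwinding of MacLane's general result in the poset setting, using the pointwise Kan extension formulas already recorded in \cref{defn:pushforward}, \cref{defn:pushforward-open-supports}, and the description of $\upsilon$ in the appendix. So your proof is not a genuinely different route so much as the explicit content behind the citation the paper chose to invoke; it has the advantage of being self-contained and of making transparent exactly where the fullness hypothesis is used.
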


\section{A Philosophical Overview of Interleavings}
\label{sec:philosophical-interleavings}

In this section we review the classical notion of an interleaving of $\R$-modules and some of its context\footnote{For broader context, see the introduction in~\cite{bubenik2017interleaving}.} in topological data analysis.
Our discussion of the Induced Matching Theorem is meant to motivate the \emph{shift} of a module, which is the core ingredient of any interleaving theory.
By reconsidering the problematic posets of zig-zag type from the introduction, we are led to three philosophical working hypotheses for how a shift of a generalized persistence module should be defined.
The upshot of this discussion is the logic of why we should define interleavings using the embedding of a poset into its lattice of down sets.
The reader is encouraged to proceed linearly through this section, although technically one can move to \cref{sec:relative-interleavings} directly and refer back to \cref{ssec:extrinsic-intrinsic-shifts} and \cref{ssec:thickening-and-shifts} as necessary.

\subsection{Review of Stability of $\R$-modules}

One of the nice things about classical sub-level set persistent homology, where one studies $\R$-modules, is that one can dispense with functorial language and work with an isomorphic combinatorial object known as the \emph{barcode}.
The barcode exists by virtue of a beautiful classification result coming from representation theory.
This result~\cite{crawley2015decomposition} says that any $\R$-module that is pointwise finite dimensional, written $M\colon \R \to \vect$, is completely described, up to isomorphism, by a multiset $B(M)$ of intervals in $\R$, called the \emph{barcode of $M$}.
\footnote{Note that we use $\vect$ for finite dimensional vector spaces, as opposed to the more general $\Vect$.}
The same statement is true for any totally ordered set~\cite{crawley2015decomposition,botnan2018decomposition}.
Effectively obtained by a clever change of basis, the bars in the barcode pick out ``features'' that are \emph{born} at certain radii and which \emph{die} at later radii.
The \emph{persistence} of one of these features is given by the difference between the death and the birth time, alternatively viewed as the length of the corresponding bar in the barcode.
Typically in applications of topological data analysis, long bars are interpreted as robust, significant topological features of the point cloud $X$ and short bars are viewed as noise.

However, in order for persistent homology to be a useful data science tool, one needs to be able to compare \emph{numerically} persistent homology computations arising from different data sets or different subsamplings of the same data set.
In particular, one would like to know if persistence is stable, e.g.~that point clouds that are nearby in the Hausdorff distance produce nearby persistence modules and hence nearby barcodes.
This is done by working with an algebraic generalization of the notion of Hausdorff distance, called the \emph{interleaving distance}, which is stable \cite{Cohen-Steiner2007,Chazal2009b,Chazal2016}.
The interleaving distance not only answers theoretical questions of stability, but it provides an interesting theoretical concept in algebra that provides a notion of ``approximate'' isomorphism of persistence modules.

To describe the interleaving construction, we restrict ourselves to $\Z$-modules for ease of notation and to make our generalization to arbitrary posets more suggestive.
Keeping in line with the viewpoint that interleavings give a notion of approximate isomorphism, a $0$-interleaving between two modules $M$ and $N$ is simply an isomorphism of persistence modules:
\begin{center}
\begin{tikzpicture}[scale=1.5][baseline= (a).base]
\node[scale=0.8] (a) at (0,0){
 \begin{tikzcd}
\cdots\ar[r] & M(i) \ar[r]\ar[d, color=blue, bend right] &M(i+1)\ar[r]\ar[d, color=blue,bend right] & M(i+2)\ar[r]\ar[d, color=blue, bend right] &\cdots\\
\cdots\ar[r]& N(i)\ar[r]\ar[u, color=red, bend right]&N(i+1)\ar[r]\ar[u, color=red, bend right] &N(i+2)\ar[r]\ar[u, color=red, bend right] &\cdots
\end{tikzcd}};
\end{tikzpicture}
\end{center}
A $1$-interleaving is a collection of slanted morphisms such that the following diagram commutes:
\begin{center}
\begin{tikzpicture}[scale=1.5][baseline= (a).base]
\node[scale=0.8] (a) at (0,0){
 \begin{tikzcd}
\cdots\ar[r]\ar[dr, color=blue] & M(i) \ar[r]\ar[dr, color=blue] &M(i+1)\ar[r]\ar[dr, color=blue] & M(i+2)\ar[r]\ar[dr, color=blue] &\cdots\\
\cdots\ar[r]\ar[ur,color=red]& N(i)\ar[r]\ar[ur, color=red]&N(i+1)\ar[r]\ar[ur, color=red] &N(i+2)\ar[r]\ar[ur, color=red] &\cdots
\end{tikzcd}};
\end{tikzpicture}
\end{center}
A $2$-interleaving is a collection of morphisms $\{M(i) \to N(i+2)\}$ and $\{N(i) \to M(i+2)\}$ making a similar diagram commute.
The reader is well-posed to infer the general definition of an $\epsilon$-interleaving, although it is defined precisely in \cref{defn:R-interleaving} below.

Note that a $1$-interleaving requires that the morphism $M(i) \to M(i+2)$ factor through $N(i+1)$ in a natural way.
Intuitively, this means that every ``2-persistent'' feature of $M(i)$---that is a bar of length at least two that overlaps the index $i$---is also present in $N(i+1)$, and vice versa.
This intuition is made precise in the Induced Matching Theorem of~\cite{Bauer2015a}, which provides an alternative proof of the celebrated Isometry Theorem~\cite{Lesnick2012}.
The Isometry Theorem states that the interleaving distance (defined below) between persistence modules is equal to the \define{bottleneck distance} between the barcodes $B(M)$ and $B(N)$, which is defined in terms of matching bars in the respective barcodes and can be computed in polynomial time using the Hungarian algorithm or other algorithms, e.g.~\cite{kerber2017geometry}.
The paper~\cite{Bauer2015a} shows that a $1$-interleaving guarantees that the left and right endpoints of any matched pair of bars differ by at most 1, and every bar of length at least 2 is matched\footnote{In the computation of the bottleneck distance between $B(M)$ and $B(N)$, intervals in each barcode can go unmatched, but at a penalty proportional to their length.}.

Algebraically, the slanted morphisms used in the definition of an $\epsilon$-interleaving are morphisms between one module and the \emph{shift} of the other module.

\begin{defn}\label{defn:R-shift}
Let $M$ be an $\R$-module and let $\epsilon\in [0,\infty)$ be a non-negative real number.
Consider the map of posets $T_{\epsilon}\colon\R\to \R$ that sends a number $t$ to $t+\epsilon$.
The \define{$\epsilon$-shift of $M$} is the pullback of $M$ along $T_{\epsilon}$, i.e.~$M^{\epsilon}:=T^*_{\epsilon}M$.
Consequently, we have $M^{\epsilon}(t)=M(t+\epsilon)$ and the linear map $M^{\epsilon}(t)\to M^{\epsilon}(s)$ is just the map $M(\epsilon+t)\to M(\epsilon+s)$.

The \define{$\epsilon$-shift functor}, written $(-)^{\epsilon}\colon\Vect^{\R} \to \Vect^{\R}$, sends a module $M$ to its shift $M^{\epsilon}$ and sends a map of modules $\psi\colon M \to N$ to the associated map $\psi^{\epsilon}(t) = \psi(\epsilon+t)\colon  M^{\epsilon}(t) \to N^{\epsilon}(t)$.
\end{defn}

The map $T_{\epsilon}$ in \cref{defn:R-shift} has the two properties of a translation, as introduced in \cref{defn:translation}.
In particular there is a natural transformation from the identity map $\id_{\R}\colon \R \to \R$ to $T_{\epsilon}$.
This observation implies a corresponding natural transformation on the level of modules, i.e.~$\eta^{\epsilon}\colon \id_{\Vect^{\R}} \Rightarrow (-)^{\epsilon}$.
To see this, let $\eta_M^{\epsilon}\colon M \to M^{\epsilon}$ be the morphism whose restriction to each $M(t)$ is the internal morphism $M(t)\to M(t+\epsilon)$.
This natural transformation is used to give a succinct, equational way of defining an $\epsilon$-interleaving.

\begin{defn}\label{defn:R-interleaving}
Given $\epsilon\in [0,\infty)$, an \define{$\epsilon$-interleaving of $\R$-modules} $M,N\colon\R\to \cat$ is a pair of morphisms $\psi\colon  M\to N^\epsilon$ and $\varphi\colon N\to M^\epsilon$ such that
$\varphi^\epsilon\circ \psi = (\eta_M^\epsilon)^\epsilon = \eta_M^{2\epsilon}$ and $\psi^\epsilon\circ \varphi = (\eta_N^\epsilon)^\epsilon = \eta_N^{2\epsilon}.$
\label{def:1DInterleaving}
\end{defn}

\begin{rmk}\label{rmk:twice-shifted}
Due to the nice structure of $\R$, we have that shifting by $\epsilon$ twice is the same as shifting by $2\epsilon$.
This is not always the case, so we are careful to use the notation $((\bullet)^{\epsilon})^{\epsilon}$ to indicate shifting by $\epsilon$ twice.
This observation will be continued later in the paper.
\end{rmk}

\subsection{Interleavings for Zig-Zags and the Necessity of Colimits}
\label{ssec:zig-zag-colimits}

As already indicated in the introduction, topological data analysis motivates the study of $\pP$-modules beyond the simplest totally ordered posets such as $\Z$ or $\R$.
The question of how to defined interleavings for modules indexed by general posets $\pP$ immediately leads to the question of how do we shift a general $\pP$-module by some amount $\epsilon$.
In other words, if $M$ is a $\pP$-module, then how do we defined $M^{\epsilon}$?
Revisiting the simplest instance of a zig-zag poset $\pP$ from the introduction, consider the following $\pP$-module $M$.

\[
M(a) \xleftarrow{\alpha} M(b) \xrightarrow{\beta} M(c).
\]

We want to think of the morphisms $\alpha$ and $\beta$ as being the induced maps on homology given by a zig-zag filtration---they represents the two possible ways of ``stepping through'' the filtration.
If we have a homology class $[\gamma]$ at index $b$ in this zig-zag filtration, then the images $\alpha([\gamma])$ and $\beta([\gamma])$ are possible ``futures'' of the class $[\gamma]$ when stepping through this zig-zag filtration.
Inspired by the earlier discussion for persistence modules and the results of~\cite{Bauer2015a}, we advance the following philosophical working hypotheses for how to define the shift of the zig-zag module $M$ indicated above.

\begin{itemize}
	\item[{\bf H1:}] The 1-shift of a module $M$ evaluated at $b$, written $M^1(b)$, should provide a time-1 peek into the future of $M$.
	The implied notion of time here will be made precise in \cref{ssec:extrinsic-intrinsic-shifts}.\label{item:H1}
	\item[{\bf H2:}] An element, colloquially called a ``feature,'' of $M(b)$ should be considered 1-persistent if and only if its image under all possible futures within time-$1$ is non-zero.\label{item:H2}
	\item[{\bf H3:}] The collection of 1-persistent features of $M(b)$ should be consistently summarized as the image of the natural map $M(b) \to M^1(b)$.\label{item:H3}
\end{itemize}

Proceeding by trial and error, we can see that taking $M^1(b)$ to be the direct sum $M(a)\oplus M(c)$ is ``too big'' by considering the following two example zigzag modules:
\begin{equation*}
\text{Example A:} \qquad
\field \xleftarrow{1} \field \xrightarrow{1} \field
\qquad \qquad \text{Example B:} \qquad
0 \xleftarrow{0} \field \xrightarrow{1} \field
\end{equation*}
For the module in Example A, the left-most vector space $\field = M(a)$ is identified with the right-most $\field = M(c)$ through the middle vector space; the direct sum $\field\oplus \field$ would then be counting the same feature twice.
For the module in Example B, the direct sum $\field\oplus 0$ will not be consistent with the idea that an element should be 1-persistent if and only if its image is non-zero under both of the morphisms.
Thus, we need to define the shift $M^1(b)$ as a quotient of $M(a)\oplus M(c)$ which avoids double-counting and identifies with $0$ appropriately.

The correct categorical construction that captures this intuition is the \define{pushout} $\text{PO}$ of the diagram, which is a special case of the \define{colimit} of the module.
$$\text{PO} \cong M(a)\oplus M(c)/\{(\alpha(x), \beta(x))\mid x\in M(b)\}$$
Below, we have three examples modules and their corresponding pushouts/colimits.
\begin{center}
\begin{tikzpicture}[scale=1.5][baseline= (a).base]
\node[scale=0.8] (a) at (0,0){
 \begin{tikzcd}
 & \text{PO} \cong \field &  &  & \text{PO} \cong 0 &  & & \text{PO} \cong \field^2 &\\
\field\ar[ur, "1", dashed] & & \field\ar[ul, swap, "1", dashed] & 0\ar[ur, "0", dashed] & & \field\ar[ul, "0", swap, dashed] & \field\ar[ur, "1", dashed] & & \field\ar[ul, "1",swap, dashed] \\
& \field\ar[ul, "1"]\ar[ur, swap, "1"]\ar[uu, "1", dashed] & & & \field\ar[ul, "0"]\ar[ur, swap, "1"]\ar[uu, "0", dashed] & & & \field\ar[ur, swap,"0"]\ar[ul, "0"]\ar[uu, "0", dashed] &
\end{tikzcd}};
\end{tikzpicture}
\end{center}
The first two modules, reading from left to right, are the modules from Example A and Example B.
The rightmost example, which we'll call Example C, is worth discussing further.
The one and only feature in $M(b)=\field$ dies under both of the maps, so the image of the map $M(b)\to M^1(b)$ is properly 0, but by peeking into the future, one sees two un-related features come into existence.

Perhaps we feel somewhat convinced that $M^1(b)$ is rightly given by the pushout of the module $M$.
However, this leaves open the question of what $M^1(c)$ (or $M^1(a)$) should be.
To guide our choice, we revisit the second fundamental property of a shift operation: there is a natural transformation (a map of modules) from $M$ to $M^1$.
The somewhat counterintuitive consequence of this property is that the value of $M^1(c)$ depends on the whole diagram for $M$, in order to guarantee that the following diagram commutes:
\[
\begin{tikzcd}
 M(a) \ar[d] & \ar[l] M(b)\ar[d]\ar[r]& M(c)\ar[d]\\
 M^1(a) & \ar[l] M^1(b) \ar[r] & M^1(c)
\end{tikzcd}
\]

Revisiting Examples A and B, the above diagram becomes the two diagrams below.
\[
\begin{tikzcd}
\field \ar[d] & \field \ar[d, "1"]\ar[r, "1"]\ar[l] & \field\ar[d] & & 0 \ar[d] & \field\ar[d, "0"]\ar[r, "1"]\ar[l] & \field\ar[d] \\
M^1(a) & \ar[l] \field\ar[r] & M^1(c)& & M^1(a) & \ar[l] 0 \ar[r] & M^1(c).
\end{tikzcd}
\]
Let's focus on $M^1(c)$ for these two diagrams.
For the diagram depicted to the left, which is associated to Example A, one possible choice that would make the diagram commute is $M^1(c)=\field$.
However, for the diagram to the right, which is associated to Example B, we cannot choose $M^1(c)=\field$ because then the right-most square above would not commute.
This is somewhat paradoxical because if we required that $M^1(c)$ be determined ``locally'' by that portion of the diagram only involving $M(c)$, namely $M(b) \to M(c)$, then we would require that $M^1(c)$ be the same for the two diagrams above, which we have just determined cannot be the case.
Indeed, the only choice for the right square above is $M^1(c)=0$, which is precisely the pushout of the diagram $0\leftarrow \field \rightarrow \field$ and the value of $M^1(b)$.
This suggests that for modules of this form we should have $M^1(c)\cong M^1(b)$ and, symmetrically, $M^1(a)\cong M^1(b)$, however these conclusions are, in a sense, consequences of ``boundary effects'' gotten from the fact that the zig-zag only has 3 terms.

So what construction allows us to look at portions of an arbitrary zig-zag module and allows us to summarize the behavior using a colimit?
The perspective adopted in~\cite{botnan2018algebraic} is to embed the underlying zig-zag poset $\pZ$ into $\R^2$, viewed as a map $j\colon\pZ \hookrightarrow \R^2$ and \emph{extend} a $\pZ$-module $M$ to an $\R^2$-module, using the map $j$.
This is exactly the pushforwad of $M$ along $j$, written $j_*M$, as defined in \cref{defn:pushforward}.

To see how the pushforward operation works for embedding a larger zig-zag into a poset like $\R^2$, consider $\pZ$ embedded as the bottom zig-zag (everything in the row labeled $(0)$ and below) in the poset $\pW$ below.
Let's call this embedding $j\colon\pZ \to \pW$, just as before.

\begin{center}
\begin{tikzpicture}[scale=1.5][baseline= (a).base]
\node[scale=0.7] (a) at (0,0){
 \begin{tikzcd}
 & & ~ &   & ~&  & ~ &   &~ &   & ~&   & ~& \\
(2)&\bullet\ar[ur,dashed] &  & \bullet\ar[ur,dashed]\ar[ul,dashed]  &  & \bullet\ar[ur,dashed]\ar[ul,dashed]   &  & \bullet\ar[ur,dashed]\ar[ul,dashed]   &  & \bullet\ar[ur,dashed]\ar[ul,dashed]   &  & \bullet\ar[ur,dashed]\ar[ul,dashed]   &  & \bullet\ar[ul,dashed] \\
 & & \bullet\ar[ur]\ar[ul]  &   & \bullet\ar[ur]\ar[ul] &  & \bullet\ar[ur]\ar[ul] &   &\bullet\ar[ur]\ar[ul]  &   & \bullet\ar[ur]\ar[ul] &   & \bullet\ar[ur]\ar[ul]&  \\
(1)&\bullet\ar[ur] &  & \bullet\ar[ur]\ar[ul]  &  & \bullet\ar[ur]\ar[ul]   &  & \bullet\ar[ur]\ar[ul]   &  & \bullet\ar[ur]\ar[ul]   &  & \bullet\ar[ur]\ar[ul]   &  & \bullet\ar[ul] \\
 & & \bullet\ar[ur]\ar[ul]  &   & \bullet\ar[ur]\ar[ul] &  & \bullet\ar[ur]\ar[ul] &   &\bullet\ar[ur]\ar[ul]  &   & \bullet\ar[ur]\ar[ul] &   & \bullet\ar[ur]\ar[ul]&  \\
 (0)&\bullet\ar[ur] &  & \bullet\ar[ur]\ar[ul]  &  & \bullet\ar[ur]\ar[ul]   &  & \bullet\ar[ur]\ar[ul]   &  & \bullet\ar[ur]\ar[ul]   &  & \bullet\ar[ur]\ar[ul]   &  & \bullet\ar[ul] \\
 & & \bullet\ar[ur, thick]\ar[ul, thick]  &   & \bullet\ar[ur, thick]\ar[ul, thick] &  & \bullet\ar[ur, thick]\ar[ul, thick] &   &\bullet\ar[ur, thick]\ar[ul, thick] &   & \bullet\ar[ur, thick]\ar[ul, thick]&   & \bullet\ar[ur, thick]\ar[ul, thick]&
\end{tikzcd}};
\end{tikzpicture}
\end{center}

Now consider the following $\pZ$-module $M$:
\begin{center}
\begin{tikzpicture}[scale=1.5][baseline= (a).base]
\node[scale=0.8] (a) at (0,0){
 \begin{tikzcd}
0 &  & 0  &  & \field  &  & \field  &  & 0&  & 0  &  & 0 \\
 & 0 \ar[ur]\ar[ul]&   & 0\ar[ur]\ar[ul]&  & \field\ar[ur, "1", swap]\ar[ul, "1"] &   &\field\ar[ur]\ar[ul, "1"]  &   & 0\ar[ur]\ar[ul] &   & 0\ar[ur]\ar[ul]&
\end{tikzcd}};
\end{tikzpicture}
\end{center}
To evaluate the pushforward $j_*M$ along the described embedding of $\pZ$ into the larger poset $\pW$ depicted above one carries out the following procedure: pick a point $p$ in the larger poset $\pW$, consider the principal down-set $D_p$, take the restriction of $M$ to $j^{-1}(D_p)$, take the colimit of the restriction $M|_{j^{-1}(D_p)}$ to obtain $j_*M(p)$.
If we carry out this procedure for all points $p\in\pW$ one obtains the following $\pW$-module:

\begin{center}
\begin{tikzpicture}[scale=1.5][baseline= (a).base]
\node[scale=0.7] (a) at (0,0){
 \begin{tikzcd}
  & & ~  &   & ~ &  & ~ &   &~ &   & ~&   & ~&  \\
(2)
& \field \ar[ur, gray, dashed] &
& \field \ar[ur, gray, dashed]\ar[ul, gray, dashed]  &
& 0      \ar[ur, gray, dashed]\ar[ul, gray, dashed]   &
& 0      \ar[ur, gray, dashed]\ar[ul, gray, dashed]   &
& 0      \ar[ur, gray, dashed]\ar[ul, gray, dashed]   &
& 0      \ar[ur, gray, dashed]\ar[ul, gray, dashed]   &
& 0      \ar[ul, gray, dashed]
\\
& & \field \ar[ur,swap,"1"] \ar[ul,"1"]
& & \field \ar[ur,swap,"1"] \ar[ul,"1"]
& & 0 \ar[ur] \ar[ul,]
& & 0 \ar[ur] \ar[ul,]
& & 0 \ar[ur] \ar[ul,]
& & 0 \ar[ur] \ar[ul,]
&  \\
(1)
& 0 \ar[ur, gray, dashed] &
& \field \ar[ur, swap,"1", gray, dashed] \ar[ul, "1", gray, dashed]  &
& \field \ar[ur, gray, dashed]           \ar[ul, "1", gray, dashed]  &
& 0 \ar[ur, gray, dashed] \ar[ul, gray, dashed]   &
& 0 \ar[ur, gray, dashed] \ar[ul, gray, dashed]   &
& 0 \ar[ur, gray, dashed] \ar[ul, gray, dashed]   &
& 0 \ar[ul, gray, dashed]
\\
& & 0\ar[ur] \ar[ul]
& & \field   \ar[ur, swap,"1"] \ar[ul, "1"]
& & \field   \ar[ur]           \ar[ul, "1"]
& & 0        \ar[ur] \ar[ul]
& & 0        \ar[ur] \ar[ul]
& & 0        \ar[ur] \ar[ul]
& \\
(0)
& 0\ar[ur, gray, dashed] &
& 0\ar[ur, gray, dashed]\ar[ul, gray, dashed]  &
& \field\ar[ur, gray, dashed]\ar[ul, "1", gray, dashed]   &
& \field\ar[ur, gray, dashed]\ar[ul, "1", gray, dashed]   &
& 0\ar[ur, gray, dashed]\ar[ul, gray, dashed]   &
& 0\ar[ur, gray, dashed]\ar[ul, gray, dashed]   &
& 0\ar[ul, gray, dashed]
\\
& & 0      \ar[ur, thick] \ar[ul, thick]
& & 0      \ar[ur, thick] \ar[ul, thick]
& & \field \ar[ur, thick, swap,"1"] \ar[ul, thick, "1"]
& & \field \ar[ur, thick] \ar[ul, thick, "1"]
& & 0	   \ar[ur, thick] \ar[ul, thick]
& & 0	   \ar[ur, thick] \ar[ul, thick]
&
\end{tikzcd}};
\end{tikzpicture}
\end{center}

The perspective taken in~\cite{botnan2018algebraic} is that the $1$-shift of $M$, written $M^1$ is then the restriction of the extension to the portion of the module between $(0)$ and $(1)$, i.e.
\begin{center}
\begin{tikzpicture}[scale=1.5][baseline= (a).base]
\node[scale=0.8] (a) at (0,0){
 \begin{tikzcd}
0&  & \field &  & \field  &  & 0   &  & 0  &  & 0  &  & 0, \\
& 0\ar[ur]\ar[ul]  &   & \field\ar[ur, swap,"1"]\ar[ul, "1"] &  & \field\ar[ur]\ar[ul, "1"] &   &0\ar[ur]\ar[ul] &   & 0\ar[ur]\ar[ul]&   & 0\ar[ur]\ar[ul]&
\end{tikzcd}};
\end{tikzpicture}
\end{center}
the 2-shift $M^2$ is then
\begin{center}
\begin{tikzpicture}[scale=1.5][baseline= (a).base]
\node[scale=0.8] (a) at (0,0){
 \begin{tikzcd}
\field &  & \field &  & \field  &  & 0   &  & 0  &  & 0  &  & 0 \\
& \field \ar[ur,"1", swap]\ar[ul,"1"]  &   & \field\ar[ur, swap,"1"]\ar[ul, "1"] &  & 0\ar[ur]\ar[ul] &   &0\ar[ur]\ar[ul] &   & 0\ar[ur]\ar[ul]&   & 0\ar[ur]\ar[ul]&
\end{tikzcd}};
\end{tikzpicture}
\end{center}
and so on.
The reader may verify that $M^3$ has two non-trivial entries, while $M^4$ is the $0$ module.

Intuitively the reason why the embedding $j\colon\pZ \hookrightarrow \pW$ provides a way of shifting a $\pZ$-module $M$ is that $\pW$ has a natural translation operation.
This translation operation $T$ simply takes a point in row $(i)$ to the corresponding point in row $(i+1)$.
In the next section we are going to consider a more intrinsic way of defining shifts of $\pP$-modules for general $\pP$.

\subsection{Moving from Extrinsic Shifts to Intrinsic Shifts with Time}
\label{ssec:extrinsic-intrinsic-shifts}

At the start of \cref{ssec:zig-zag-colimits} we outlined three philosophical hypotheses for how to shift modules over the simplest possible zig-zag poset, with a view towards what a general theory should be.
Hypotheses \textbf{H1} and \textbf{H2} made informal mention of a notion of ``time,'' whose meaning was deferred until now.
One way in which we can make ``time'' precise is by enriching a poset with a weight function.

\begin{defn}\label{defn:weighted-poset}
A \define{weighted poset} $(\pP, w)$ is a poset $\pP$ with a function $w\colon \pP\times \pP \to \R_{\geq 0}\cup \{\infty\}$ satisfying
\begin{enumerate}
\item $w(p,q) = 0$ for $p\geq q$;
\item $w(p,q) > 0$, whenever $p< q$; and
\item $w(p,q) \leq \omega(p, r) + w(r, q)$ for all $p,q,r\in \pP$.
\end{enumerate}
In other words, a weighted poset is a Lawvere metric~\cite{lawvere1973metric} on the poset category $\pP$ with the additional requirement that $w(p,q) = 0$ whenever $p\geq q$.
\end{defn}

\begin{ex}[Examples A and B continued]\label{ex:weighted_zigzag}
	If we weight the poset used in Examples A and B above, so that each arrow has weight one, then one obtains the weighted poset $(\pZ,w)$:
	\begin{equation*}
	\label{eq:weighted_shift_ex}
	a \xleftarrow{1} b \xrightarrow{1} c.
	\end{equation*}
\end{ex}

The axioms of a weighted poset are arrived at in part by emulating the axioms of a metric space, but where the binary relation $\leq$ breaks the symmetry of how ``distances'' are calculated.
A helpful analogy is given by special relativity, where we have the notion of two points in space-time being time-like separated.

\begin{rmk}[Analogy with Special Relativity]\label{rmk:time}
Two points in space-time are \define{time-like separated} if one can travel from one point to the other without faster-than-light travel.
This induces a poset structure on space-time where $(x,t) \leq (y,t')$ if and only if $(y,t')$ is in the future light-cone of $(x,t)$.
The down-set of $(y,t')$ in this partial order is the set of points in space-time that can causally influence any event at $(y,t')$, i.e.~points in the down-set of $(y,t')$ are in ``in the past.''

If one adopts this language for a general poset $\pP$ the relation $p\leq q$ can be read as $q$ is ``in the future'' of $p$.
The weight $w(p,q)$ then provides a lower bound on the amount of time required before $p$ can reach $q$.
The axioms of a weighted poset then have the following interpretations:
\begin{enumerate}
	\item If $q$ is actually in the past of $p$, then no time needs to pass.
	\item If $q$ is in the future of $p$, then some time must pass.
	\item It is impossible to shortcut the elapsed time between two events by moving through a third event $r$, first.
\end{enumerate}
\end{rmk}

In view of the above remark, we can define for any $p$ in a weighted poset $(\pP,w)$ and any $0\leq \epsilon\in [0,\infty)$, the \define{$\epsilon$-time ball} to be
$$\overrightarrow{B}(p;\epsilon) = \{ r\in \pP\mid w(p,r) \leq \epsilon\};$$
this is the set of points within $\epsilon$-time of $p$.
Notice that for the $0$-time ball we have that $\overrightarrow{B}(p;0)=\{ r\in \pP \mid r\leq p\}$ is exactly the down-set $D_p$, which is the set of events in the past of $p$.
Moreover, any $\epsilon$-time ball $\overrightarrow{B}(p;\epsilon)$ is a down-set because if $w(p,q)\leq \epsilon$, then for any $r\leq q$ the triangle inequality for a weighted poset provides
\[
	w(p,r) \leq w(p,q) + w(q,r) = \epsilon + 0 =\epsilon.
\]

Now we can make precise the intuition described in hypothesis \textbf{H1} from \cref{ssec:zig-zag-colimits}.
We want $M^\epsilon(p)$ to be a vector space that summarizes the features within $\epsilon$-time from $p$, which is precisely the colimit of the restriction of $M$ to the subposet $B(p; \epsilon)$
\[
	M^\epsilon(p) := \varinjlim M|_{B(p; \epsilon)}.
\]
This defines a $\pP$-module because the internal morphisms $M^\epsilon(p) \to M^\epsilon(q)$ are induced by universality of colimits.

\begin{ex}[Examples A and B continued]
Note that if we consider the weighted zigzag in \cref{ex:weighted_zigzag} then the fact that $\overrightarrow{B}(b;1)=\pZ$ implies that $\overrightarrow{B}(a;1)=\overrightarrow{B}(c;1)=\pZ$ as well.
Hence the 1-shifts of
\begin{equation*}
\text{Example A:}
\qquad
\field \xleftarrow{1} \field \xrightarrow{1} \field
\qquad \text{and} \qquad
\text{Example B:}
\qquad
0 \xleftarrow{0} \field \xrightarrow{1} \field
\end{equation*}
are
\begin{equation*}
\text{1-shift of A:}
\qquad
\field \xleftarrow{1} \field \xrightarrow{1} \field
\qquad \text{and} \qquad
\text{1-shift of B:} \qquad
0 \xleftarrow{0} 0 \xrightarrow{0} 0.
\end{equation*}
\end{ex}

\subsection{Thickening Down Sets to Define Shifts of Modules}
\label{ssec:thickening-and-shifts}

We can generalize the above section on shifting modules over a weighted poset by recognizing that all we really want is a way of translating down sets.
This leads us to the notion of a \emph{thickening} structure on a poset, which is essentially a super-linear family of translations on $\Down(\pP)$, but with an added locality assumption.
The motivation for this definition comes from understanding methods for ``shifting'' or ``smoothing'' sheaves (or cosheaves) over a metric space.

If $\topX$ is a topological space, then the collection of open subsets forms a poset $\Open(\topX)$, ordered by containment.
Following the work of~\cite{Curry2014,deSilva2016,Munch2016,kashiwara2017persistent} we can define how to smooth/shift functors modeled on the poset of open sets of a metric space by ``thickening'' the open sets.
This thickening operates in the expected way with
\[
	U^{\epsilon}:=U \cup \left(\bigcup_{x\in U} B(x,\epsilon)\right) \qquad \text{where} \qquad B(x,\epsilon):=\{y\in \topX \mid d(x,y) < \epsilon\},
\]
which in turn defines a map of posets
\[
T(-,-): \quad \Open(\topX) \times \R_{\geq 0} \to \Open(\topX) \qquad (U,\epsilon) \mapsto T(U,\epsilon):=U^{\epsilon}.
\]
Precomposing an $\Open(\topX)$-module by $T(\bullet, \e)$ then defines a shift for that module.
Recognizing that the collection of down-sets $\Down(\pP)$ serves the role of $\Open(\topX)$ in the Alexandrov topology, we can abstract this particular thickening construction away from its metric origins and isolate it as a new structure, thereby unifying the constructions of~\cite{Curry2014,deSilva2016} and~\cite{botnan2018algebraic}.

\begin{defn}\label{defn:thickening}
A \define{thickening} on $\pP$ is a map of posets
$$T(-,-)\colon \Down(\pP)\times \R_{\geq 0} \to \Down(\pP)$$
satisfying the axioms listed below.
Note that for the sake of cleaner notation, we will adopt the convention that $T(-,\epsilon)=:T_{\epsilon}$ and that for $S\in\Down(\pP)$ we have $T_{\epsilon}(S)=:S^{\epsilon}$.

The axioms of a thickening are as follows:
\begin{enumerate}
	\item (Identity) $T_0\colon \Down(\pP) \to \Down(\pP)$ is the identity.
	\item (Subadditivity) For all $\delta, \epsilon \geq 0$ and $S\in \Down(\pP)$ we have the containment
	\[
		(S^{\epsilon})^{\delta} \subseteq S^{\epsilon+\delta}.
	\]
	\item (Locality) For each $\epsilon\geq 0$ and for any down-set $S \in \Down(\pP)$, the union of $\{D_p^{\epsilon}\}_{p\in S}$ is equal to the thickening of $S$, i.e.
	\[
		\bigcup_{p\in S} T_{\epsilon}(D_p)
		= T_{\epsilon}\left(\bigcup_{p\in S} D_p\right)
		=T_{\epsilon}(S)
	\]
\end{enumerate}
\end{defn}

\begin{defn}
\label{def:weakThickening}
A \define{weak thickening} is a thickening where we relax the axiom of the identity and instead only require that there be a natural transformation $\id_{\Down(\pP)} \Rightarrow T(-,0)$.
\end{defn}

We can now define the $\epsilon$-shift of a $\pP$-module for any poset $\pP$ equipped with a thickening as follows.

\begin{defn}\label{defn:shift-module}
Given a $\pP$-module $M\colon\pP \to \cat$ valued in a co-complete category $\cat$ and a (weak) thickening $T$ on $\pP$, we define the \define{$\epsilon$-shift of $M$} in three steps:
\begin{enumerate}
	\item[Step 1:] We take the left Kan extension of $M\colon\pP \to \cat$ along $\iota\colon\pP\to \Down(\pP)$ to obtain the $\Down(\pP)$-module $\iota_*M=:\cosheaf{M}$.
	\item[Step 2:] We pullback $\cosheaf{M}$ along $T_{\epsilon}\colon \Down(\pP) \to \Down(\pP)$ to obtain a module $\cosheaf{M}^{\epsilon}:=\cosheaf{M}\circ T_{\epsilon}$.
	\item[Step 3:] We restrict $\cosheaf{M}^{\epsilon}$ along the inclusion $\iota\colon\pP \to \Down(\pP)$ to obtain $M^{\epsilon}:=\iota^*\cosheaf{M}^{\epsilon}$.
\end{enumerate}
Said using diagrams, we define $M^{\epsilon}$ using the curved top arrow:
\[
\xymatrix{\pP \ar[d]_{\iota}  \ar@/^2pc/[rr]^{M^{\epsilon}} & \pP \ar[r]^M \ar[d]_{\iota} & \cat \\
		\Down(\pP) \ar[r]_{T_{\epsilon}} & \Down(\pP) \ar[ur]_{\cosheaf{M}} & }
\]
Said using push-pull notation, we can also write the shifted module as
\[
  M^{\e}:= \iota^*T_{\e}^*\iota_* M
\]
Point-wise we can define $M^{\epsilon}$ using the formula
\[
M^{\epsilon}(p):=\cosheaf{M}(D_p^{\epsilon})
\]
where $D_p$ is the principal down-set at $p\in\pP$.
This construction is clearly functorial, thereby giving us a \define{shift functor} for $\pP$-modules:
\[
(-)^{\epsilon}\colon \Fun(\pP,\cat) \to \Fun(\pP,\cat).
\]
\end{defn}
\section{A Relative Theory of Interleavings}
\label{sec:relative-interleavings}

In this section we give an abstract treatment of how to relativize the interleaving construction of Bubenik, de Silva, and Scott~\cite{Bubenik2014a}.
From this we mean that we are going to consider how to define interleavings of $\pP$-modules when we are given the data of
\begin{itemize}
  \item a map of posets $f\colon\pP \to \pQ$, and
  \item a super-linear family of translations on $\pQ$.
\end{itemize}
Let's recall this latter definition.

\begin{defn}
  The collection of translations on a poset $\pQ$, denoted $\Trans_{\pQ}$, is a poset, where $T \leq T'$ if and only if for every $q\in\pQ$ we have $T(q)\leq T'(q)$.
\end{defn}

\begin{defn}\label{defn:family-of-translations}
  A \define{family of translations} is a function $T_{\bullet} \colon [0,\infty) \to \Trans_{\pQ}$, which associates to each $\e\in [0,\infty)$ a translation $T_{\e}\colon\pQ \to \pQ$.
  Such a family is \define{superlinear} if $T_{\e_2}\circ T_{\e_1} \leq T_{\e_1 +\e_2}$ for all $\e_1,\e_2\in [0,\infty)$.
  Note that every translation is by definition greater than the identity, i.e.~$\id_{\pQ} \leq T_{\e}$ for every $\e\in[0,\infty)$.
\end{defn}

\begin{rmk}
  The superlinear condition automatically makes $T_{\bullet}$ into a \emph{lax monoidal functor} from $[0,\infty)$ to $\Trans_{\pQ}$, where the monoidal structure on $[0,\infty)$ is addition and the monoidal structure on $\Trans_{\pQ}$ is composition.
\end{rmk}

\begin{defn}\label{defn:shift-over-Q}
  If $\pQ$ is equipped with a superlinear family of translations $T_{\bullet}$, then we have for every $\e$ an $\e$-shift functor
  \[
    (-)_{\pQ}^{\e} \colon \Fun(\pQ,\cat) \to \Fun(\pQ,\cat) \qquad M \,\squigrightarrow \, M^{\e}:=T_{\e}^*M
  \]
  For every $\e\geq 0$ we have a natural transformation from the identity to this functor, i.e.
  \[
    \eta_{\pQ}^{\e} \colon \id_{\cat^{\pQ}} \Rightarrow (-)^{\e}_{\pQ}.
  \]
  This comes from post-composing the natural transformation $\id_{\pQ} \Rightarrow T_{\e}$ with the functor $M\colon\pQ \to \cat$ for any $\pQ$-module $M$.
  Additionally, for every pair $0\leq \e\leq \e'$ we have a natural transformation
  \[
    \eta_{\pQ}^{\e,\e'} \colon (-)^{\e}_{\pQ} \Rightarrow (-)^{\e'}_{\pQ}.
  \]
  Finally, we note that the superlinear condition $T_{\e_1}\circ T_{\e_2} \leq T_{\e_1+\e_2}$ provides another family of natural transformations
  \[
    \Sigma_{\pQ}^{\e_1,\e_2}\colon ((-)_{\pQ}^{\e_1})_{\pQ}^{\e_2} \Rightarrow (-)_{\pQ}^{\e_1+\e_2}
  \]
\end{defn}

We now introduce a variant on the notion of interleaving that was explored in detail by Anastasios Stefanou in his thesis~\cite{stefanou2018} and associated journal article~\cite{de2018theory}.

\begin{defn}\label{defn:weak-interleaving}
  We fix a superlinear family of translations $T_{\bullet}$ on a poset $\pQ$.
  A \define{weak $\e$-interleaving} of two $\pQ$-modules $M$ and $N$ is a pair of morphisms $\varphi\colon M \to T_{\e}^*N$ and $\psi\colon N \to T_{\e}^*M$ making the following diagram commute.
  \[
  \xymatrix{
    (M)_{\pQ}^{2\e} & ((M)_{\pQ}^{\e})_{\pQ}^{\e} \ar[l]_{\Sigma^{\e,\e}} & & ((N)_{\pQ}^{\e})_{\pQ}^{\e} \ar[r]^{\Sigma^{\e,\e}} & (N)_{\pQ}^{2\e} \\
    & (M)_{\pQ}^{\e} \ar[urr]_(.3){\varphi^{\e}} & & (N)_{\pQ}^{\e} \ar[ull]^(.3){\psi^{\e}} & \\
    (M)_{\pQ}^0 \ar[uu]^{\eta^{0,2\e}} & M \ar[l]^{\eta^0} \ar[urr]_(.25){\varphi} & & N \ar[r]_{\eta^0} \ar[ull]^(.25){\psi} & (N)_{\pQ}^0 \ar[uu]_{\eta^{0,2\e}}
  }
  \]
  We say $M$ and $N$ are \define{weakly $\e$-interleaved} if there exists a weak $\e$-interleaving between them.
  The \define{weak interleaving distance} between two $\pQ$-modules $M$ and $N$ is
  \[
    d_{\pQ}(M,N):=\inf \{\e \mid \exists\, \text{a weak } \e\text{- interleaving}\}
  \]
\end{defn}

\begin{rmk}[Weak versus Standard Interleavings]
  The definition of a weak interleaving is sometimes also called a \define{pentagonal interleaving}, because it requires that a pair of intertwined pentagons commute.
  This is meant to stand in contrast to the usual definition of an \define{$\e$-interleaving}, which requires that a pair of intertwined triangles commute.
  Here we drop the subscript $\pQ$ since we won't be using this notion of interleaving any further.
  \[
  \xymatrix{
    (M^{\e})^{\e} & & (N^{\e})^{\e} \\
     M^{\e} \ar[urr]_(.3){\varphi^{\e}} \ar[u]^{\eta^{\e}} & & N^{\e} \ar[ull]^(.3){\psi^{\e}} \ar[u]_{\eta^{\e}} \\
     M \ar[u]^{\eta^{\e}} \ar[urr]_(.25){\varphi} & & N \ar[u]_{\eta^{\e}} \ar[ull]^(.25){\psi}
  }
  \]
  We note that a standard interleaving always implies a weak interleaving, because if $M$ and $N$ have a standard $\e$-interleaving then the morphism $M\to (M)^{2\e}$ factors through $(M^{\e})^{\e}$.
  On the other hand, if we have a weak $\e$-interleaving, then one can check that there is at least a standard $2\e$-interleaving.
  We summarize these observations by the following string of inequalities.
  \[
    d_{\text{weak}}(M,N) \leq d_{\text{standard}}(M,N) \leq 2 d_{\text{weak}}(M,N)
  \]
\end{rmk}

\subsection{Interleaving Over the Domain of a Poset Map}

In this section we explore two pathways to defining an interleaving over the domain of a poset map $f\colon \pP\to\pQ$, under the assumption that one knows how to interleave modules over $\pQ$.
One path is to simply Kan extend a $\pP$-module to a $\pQ$-module and use the interleaving diagram in \cref{defn:weak-interleaving}, while working exclusively over $\pQ$.
We show that this path has an intrinsic counterpart defined using modules over $\pP$ and colimits of portions of these modules.
In later sections this allows us to enjoy the theoretical benefits of working in the continuous realm, while practically all computations are conducted in a discretized setting.
Our main result is that these two paths give the same distance between $\pP$-modules.

Now we introduce the key construction and definition of this section.
If we fix a map of posets $f\colon\pP\to\pQ$ and a super-linear family of translations over $\pQ$ we can define the $\e$-shift of a $\pP$-module with respect to $f$ as follows.

\begin{defn}\label{defn:shift-relative-to-f}
  Let $f\colon\pP\to\pQ$ be a map of posets. Equip $\pQ$ with a superlinear family of translations $T_{\bullet}\colon\pQ\times [0,\infty) \to \pQ$.
  For each $\e\in [0,\infty)$ we can define the \define{$\e$-shift of a $\pP$-module $M$ relative to $f$} as
  \[
    (M)_{\pP}^{\e}:=f^*T^*_{\e}f_*M = \Lan_f(M)\circ T_{\e}\circ f.
  \]
  The last expression above is simply rewriting the pullback notation as pre-composition and the pushforward as the left Kan extension.
  Note that this definition is functorial, so we get a functor
  \[
    (-)_{\pP}^{\e} \colon \Fun(\pP,\cat) \to \Fun(\pP,\cat)
  \]
\end{defn}

\begin{ex}[Shifting Over a Point]\label{ex:shift-over-point}
  Let $\pQ$ be the poset with a single element and let $T_{\bullet}$ be the family of identity translations.
  Let $f:\pP \to \pQ$ be the constant map.
  For any $\epsilon\geq 0$, the $\e$-shift of a $\pP$-module $M$ is the constant module with value the colimit of $M$.
  This also gives an example where the zero shift of a module is not the same thing as the original module.
\end{ex}

\begin{rmk}[One of Two Possible Shifts]
  If the reader recalls \cref{defn:pushforward-open-supports}, they'll perhaps note that the above shift is just one of two possible choices.
  This is correct.
  One could define the \define{upper $\e$-shift relative to $f$} as
  \[
    (M)^{\e}_{\dagger} := f^*T^*_{\e}f_{\dagger}M = \Ran_f(M)\circ T_{\e}\circ f
  \]
  This will lead to a different interleaving theory as discussed at the end of the paper.
\end{rmk}

We now check that we have all the necessary natural transformations to define weak interleavings over $\pP$ using the definition of an $\e$-shift that requires that we first pushforward a $\pP$-module to a $\pQ$-module, shift using $(-)_{\pQ}^{\e}$ and then pullback to $\pP$.

\begin{prop}\label{prop:shift-over-P-structure-maps}
  Following the setup in \cref{defn:shift-relative-to-f} we let $(-)_{\pP}^{\e}$ be the $\e$-shift functor relative to $f\colon\pP\to\pQ$.
  For every $\e\geq 0$ we have a natural transformation from the identity to this functor, i.e.
  \[
    \eta_{\pP}^{\e} \colon \id_{\cat^{\pP}} \Rightarrow (-)^{\e}_{\pP} \qquad \text{where} \qquad \eta_{\pP}^{\e} :=\upsilon \circ f^*\eta_{\pQ}f_*.
  \]
  Additionally, for every pair $0\leq \e\leq \e'$ we have a natural transformation
  \[
    \eta_{\pP}^{\e,\e'} \colon (-)^{\e}_{\pP} \Rightarrow (-)^{\e'}_{\pP} \qquad \text{where} \qquad \eta_{\pP}^{\e,\e'} := f^* \eta_{\pQ}^{\e,\e'}f_*.
  \]
  Finally, we note that for every $\e_1,\e_2\geq 0$ we have a natural transformation
  \[
    \Sigma_{\pP}^{\e_1,\e_2} \colon ((-)_{\pP}^{\e_1})_{\pP}^{\e_2} \Rightarrow (-)_{\pP}^{\e_1+\e_2}.
  \]
  This last natural transformation is defined on each $\pP$-module $M$ via the triangle
  \[
    \begin{tikzcd}
      ((M)_{\pP}^{\e_1})_{\pP}^{\e_2} \arrow[r,"\Sigma_{\pP}^{\e_1,\e_2}"] \arrow[d,"\chi"'] & f^*(f_* M)_{\pQ}^{\e_1+\e_2}=:(M)_{\pP}^{\e_1+\e_2} \\
      f^*((f_* M)_{\pQ}^{\e_1})_{\pQ}^{\e_2} \arrow[ur,"f^*\Sigma_{\pQ}^{\e_1,\e_2} f_*"'] &
    \end{tikzcd}
  \]
\end{prop}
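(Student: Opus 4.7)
The plan is to construct each of the three natural transformations as a whiskered, vertically composed diagram built from natural transformations that already exist by hypothesis: the unit $\upsilon\colon \id_{\cat^{\pP}} \Rightarrow f^*f_*$ and counit $\chi \colon f_*f^* \Rightarrow \id_{\cat^{\pQ}}$ of the $(f_*,f^*)$-adjunction from \cref{lem:push-pull-unit-counit}, together with the structure transformations $\eta_{\pQ}^{\e}$, $\eta_{\pQ}^{\e,\e'}$ and $\Sigma_{\pQ}^{\e_1,\e_2}$ on $\pQ$ recorded in \cref{defn:shift-over-Q}. In each case naturality of the resulting transformation follows from the naturality of each ingredient and the standard fact that whiskering and vertical composition preserve naturality, so most of the work is really just bookkeeping about where each factor lives.

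First, for $\eta_{\pP}^{\e}\colon \id_{\cat^{\pP}} \Rightarrow (-)_{\pP}^{\e}$, I would unpack $(M)_{\pP}^{\e} = f^*T_{\e}^*f_*M$ and build the transformation in two stages: start from $M$, apply the component $\upsilon_M\colon M \to f^*f_*M$ of the unit, then apply $f^*$ to the component $(\eta_{\pQ}^{\e})_{f_*M}\colon f_*M \to T_{\e}^*f_*M$ of the structural natural transformation on $\pQ$. The composite is exactly the formula $\upsilon \circ f^*\eta_{\pQ} f_*$ in the statement and lands in $(M)_{\pP}^{\e}$ as required.

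Second, for $\eta_{\pP}^{\e,\e'}\colon (-)_{\pP}^{\e} \Rightarrow (-)_{\pP}^{\e'}$, this is essentially a single whiskering. I would apply $f_*$ to $M$, take the component $(\eta_{\pQ}^{\e,\e'})_{f_*M}\colon T_{\e}^*f_*M \to T_{\e'}^*f_*M$, and then pull back along $f^*$; this gives precisely $f^*\eta_{\pQ}^{\e,\e'} f_*$, and naturality in $M$ is inherited from that of $\eta_{\pQ}^{\e,\e'}$.

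Third, for $\Sigma_{\pP}^{\e_1,\e_2}$ the subtlety is isolating the correct copy of $f_*f^*$. Expanding, one has
\[
((M)_{\pP}^{\e_1})_{\pP}^{\e_2} = f^*T_{\e_2}^*\,f_*f^*\,T_{\e_1}^*f_*M,
\]
and the idea is to collapse the middle $f_*f^*$ via the counit $\chi$ before invoking the $\pQ$-side subadditivity. Explicitly, I would apply $\chi_{T_{\e_1}^*f_*M}\colon f_*f^*T_{\e_1}^*f_*M \to T_{\e_1}^*f_*M$, whisker it on the left by $f^*T_{\e_2}^*$ to land in $f^*T_{\e_2}^*T_{\e_1}^*f_*M$, and then apply $f^*(\Sigma_{\pQ}^{\e_1,\e_2})_{f_*M}$ to reach $f^*T_{\e_1+\e_2}^*f_*M = (M)_{\pP}^{\e_1+\e_2}$. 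This is precisely the composite described by the triangle in the statement. The main obstacle, such as it is, is purely notational: making sure the counit is used on the \emph{middle} $f_*f^*$ rather than on the outer composition, and that the whiskering orders match the order of functor composition. Once this bookkeeping is fixed, naturality of $\Sigma_{\pP}^{\e_1,\e_2}$ in $M$ is immediate from the naturality of $\chi$ and of $\Sigma_{\pQ}^{\e_1,\e_2}$.
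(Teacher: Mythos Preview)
Your proposal is correct and follows essentially the same route as the paper's own proof: in each of the three cases you whisker the $\pQ$-side structure maps $\eta_{\pQ}^{\e}$, $\eta_{\pQ}^{\e,\e'}$, $\Sigma_{\pQ}^{\e_1,\e_2}$ by $f_*$ on the right and $f^*$ on the left, inserting the unit $\upsilon$ (for $\eta_{\pP}^{\e}$) or the counit $\chi$ on the inner $f_*f^*$ (for $\Sigma_{\pP}^{\e_1,\e_2}$) exactly as the paper does. Your extra remark that naturality is inherited because whiskering and vertical composition preserve it is a welcome clarification the paper leaves implicit.
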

\begin{proof}
  Notice that whenever we apply the natural transformation $\eta_{\pQ}^{\e}\colon\id_{\cat^{\pQ}} \Rightarrow (-)^{\e}_{\pQ}$ to a module of the form $f_*M$, then this provides a natural transformation $f_*M \Rightarrow T_{\e}^*f_*M=(f_* M )_{\pQ}^{\e}$.
  Note that in general, for every morphism used in the definition of this natural transformation, we can apply the pullback functor $f^*$ to that morphism to obtain the natural transformation
  \[
    f^*\eta_{\pQ}f_*\colon f^*f_* \Rightarrow f^*T^*_{\e}f_* =: (-)_{\pP}^{\e}.
  \]
  Precomposing this natural transformation with the unit $\upsilon\colon \id_{\cat^{\pP}} \Rightarrow f^*f_*$ from \cref{lem:push-pull-unit-counit}
  defines the desired natural transformation
  \[
    \eta_{\pP}^{\e} =\upsilon \circ f^*\eta_{\pQ}f_* \colon \id_{\cat^{\pP}} \Rightarrow f^*f_* \Rightarrow f^*T^*_{\e}f_* =: (-)_{\pP}^{\e}.
  \]

  The natural transformation $\eta_{\pP}^{\e,\e'}$ is constructed similarly.
  We restrict the morphisms given by the natural transformation
  \[
    \eta_{\pQ}^{\e,\e'} \colon (-)^{\e}_{\pQ} \Rightarrow (-)^{\e'}_{\pQ}
  \]
  to modules of the form $f_*M$ and then pullback those morphisms via $f^*$ to obtain
  \[
    \eta_{\pP}^{\e,\e'} := f^* \eta_{\pQ}^{\e,\e'}f_*.
  \]

  Finally, our natural transformation $\Sigma_{\pP}^{\e_1,\e_2}\colon ((-)_{\pP}^{\e_1})_{\pP}^{\e_2} \Rightarrow (-)_{\pP}^{\e_1+\e_2}$
  is constructed as follows.
  First we recall an equivalent way of writing a domain functor, namely:
  \[
    ((-)_{\pP}^{\e_1})_{\pP}^{\e_2}=f^*T_{\e_2}^*f_*f^*T_{\e_1}^*f_*
  \]
  Using the counit $\chi\colon f_*f^* \Rightarrow \id_{\cat^{\pQ}}$ from \cref{lem:push-pull-unit-counit} on modules of the form $T_{\e_1}^*f_*M$ and pulling back the provided morphism along $f^*T_{\e_2}^*$ gives a natural transformation
  \[
    ((-)_{\pP}^{\e_1})_{\pP}^{\e_2}=f^*T_{\e_2}^*f_*f^*T_{\e_1}^*f_* \Rightarrow f^*T_{\e_2}^*T_{\e_1}^*f_* = f^*((f_*-)_{\pQ}^{\e_1})_{\pQ}^{\e_2}.
  \]
  Now we can apply the natural transformation $\Sigma_{\pQ}^{\e_1,\e_2}\colon ((-)_{\pQ}^{\e_1})_{\pQ}^{\e_2}\Rightarrow (-)_{\pQ}^{\e_1+\e_2}$ to any module of the form $f_*M$ and pull that morphism back via $f^*$ to obtain the desired natural transformation:
  \[
    \Sigma_{\pP}^{\e_1,\e_2}\colon ((-)_{\pP}^{\e_1})_{\pP}^{\e_2} \Rightarrow f^*((f_*-)_{\pQ}^{\e_1})_{\pQ}^{\e_2} \Rightarrow f^*(f_*-)_{\pQ}^{\e_1+\e_2}=:(-)_{\pP}^{\e_1+\e_2}.
  \]
\end{proof}

We now define interleavings over $\pP$ relative to $f\colon\pP\to\pQ$.

\begin{defn}\label{defn:relative-weak-interleaving}
  As before, we fix a map $f\colon\pP\to\pQ$ and superlinear family of translations $T_{\bullet}$ over $\pQ$.
  A \define{weak $\e$-interleaving relative to $f$} of two $\pP$-modules $M$ and $N$ is a pair of morphisms $\varphi\colon M \to N^{\e}:=f^*T_{\e}^*f_*N$ and
  $\psi\colon N \to M^{\e} := f^*T_{\e}^*f_*M$
  \[
  \xymatrix{
    (M)_{\pP}^{2\e} & ((M)_{\pP}^{\e})_{\pP}^{\e} \ar[l] & & ((N)_{\pP}^{\e})_{\pP}^{\e} \ar[r] & (N)_{\pP}^{2\e} \\
    & (M)_{\pP}^{\e} \ar[urr]_(.3){\varphi^{\e}} & & (N)_{\pP}^{\e} \ar[ull]^(.3){\psi^{\e}} & \\
    (M)_{\pP}^0 \ar[uu] & M \ar[l] \ar[urr]_(.25){\varphi} & & N \ar[r] \ar[ull]^(.25){\psi} & (N)_{\pP}^0 \ar[uu]
  }
  \]
  We say $M$ and $N$ are \define{weakly $\e$-interleaved relative to $f$} if there exists a relative weak $\e$-interleaving between them.
  The \define{weak relative interleaving distance} between two $\pP$-modules $M$ and $N$ is
  \[
    d_{f}(M,N):=\inf \{\e \mid \exists\, \text{a weak } \e\text{- interleaving relative to } f\}
  \]
  With all these specific definitions in place, we will often be loose in our language and say that two $\pP$-modules are \emph{interleaved} when they are weakly interleaved relative to $f$ and write $d_{\pP}(M,N)$ for the weak relative interleaving distance between $M$ and $N$.
\end{defn}

\begin{rmk}[Unraveling Notation]
The notation above supresses a whole sequence of operations that the reader should be aware of.
For example,
\[
  ((M)_{\pP}^{\e})_{\pP}^{\e}:=f^*T_{\e}^*f_*f^*T_{\e}^*f_*M.
\]
Also $(M)_{\pP}^{2\e}$ means $f^*T_{2\e}^*f_*M$.
\end{rmk}

\begin{ex}[Interleaving Over a Point]\label{ex:interleave-over-point}
  Continuing \cref{ex:shift-over-point}, we consider the constant map $f:\pP\to \pQ=\{\star\}$ and the family of identity translations over $\pQ$.
  The reader is encouraged to verify for themselves that two $\pP$-modules $M$ and $N$ are weakly $\e$-interleaved relative to $f$ if and only if their colimits are isomorphic.
\end{ex}

Our goal is to now understand when an interleaving over $\pQ$ determines an interleaving over $\pP$ and vice versa.
To this end, we first record an immediate corollary of \cref{lem:push-pull-unit-counit}.

\begin{cor}\label{cor:shift-push-to-push-shift}
  Suppose we have defined the shift operation over $\pQ$ as in \cref{defn:shift-over-Q} and the shift over $\pP$ as in \cref{defn:shift-relative-to-f}, then we have the following natural transformation of functors, which is also natural in $\e$.
  \[
    f_*(-)^{\e}_{\pP} \Rightarrow (f_* -)^{\e}_{\pQ}.
  \]
\end{cor}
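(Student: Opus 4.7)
The plan is to realize the desired natural transformation as a single application of the counit $\chi \colon f_*f^* \Rightarrow \id_{\cat^{\pQ}}$ from \cref{lem:push-pull-unit-counit}. Unraveling the definitions, on an object $M \in \cat^{\pP}$ we have $f_*(M)^{\e}_{\pP} = f_*f^*T_{\e}^*f_* M$ while $(f_*M)^{\e}_{\pQ} = T_{\e}^*f_* M$. Evaluating the counit at the $\pQ$-module $T_{\e}^*f_*M$ gives a morphism
\[
  \chi_{T_{\e}^* f_* M} \colon f_*f^*T_{\e}^*f_* M \longrightarrow T_{\e}^* f_*M,
\]
and I propose to take this as the component at $M$ of the sought-after natural transformation. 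Symbolically, the transformation is $\chi(T_{\e}^* f_*(-))$, or in whiskered notation $\chi \,T_{\e}^* f_*$.

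Next I would verify naturality in $M$. Given a morphism $\psi \colon M \to N$ of $\pP$-modules, one forms $T_{\e}^* f_* \psi \colon T_{\e}^* f_* M \to T_{\e}^* f_* N$ and the square comparing $\chi_{T_{\e}^* f_* M}$ and $\chi_{T_{\e}^* f_* N}$ with this morphism is an instance of the naturality square of $\chi$, hence commutes.

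For naturality in $\e$, with $\e \leq \e'$ I have the natural transformation $\eta_{\pQ}^{\e,\e'}\colon (-)^{\e}_{\pQ} \Rightarrow (-)^{\e'}_{\pQ}$ of \cref{defn:shift-over-Q}. Evaluating this at $f_* M$ produces the morphism $\eta_{\pQ}^{\e,\e'}(f_*M) \colon T_{\e}^* f_* M \to T_{\e'}^* f_* M$. Applying $\chi$ to this morphism yields precisely the naturality square
\[
  \begin{tikzcd}
    f_*f^*T_{\e}^*f_*M \ar[r,"\chi_{T_{\e}^* f_* M}"] \ar[d,"f_*f^*\eta_{\pQ}^{\e,\e'}f_*M"'] & T_{\e}^*f_*M \ar[d,"\eta_{\pQ}^{\e,\e'}f_*M"] \\
    f_*f^*T_{\e'}^*f_*M \ar[r,"\chi_{T_{\e'}^* f_* M}"'] & T_{\e'}^*f_*M
  \end{tikzcd}
\]
and again this commutes by naturality of $\chi$. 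Comparing with the definition $\eta_{\pP}^{\e,\e'} = f^*\eta_{\pQ}^{\e,\e'}f_*$ from \cref{prop:shift-over-P-structure-maps} and noting that $f_*$ of this is $f_*f^*\eta_{\pQ}^{\e,\e'}f_*$, we see the left vertical arrow is exactly $f_*\eta_{\pP,M}^{\e,\e'}$, as required.

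There is no real obstacle: the construction is a direct application of the counit, and both naturality statements reduce to naturality of $\chi$. The only subtlety is bookkeeping, namely making sure that the various whiskerings of $\chi$ by $T_{\e}^*$ and $f_*$ are interpreted consistently so that one can identify the induced map on shifted modules with $f_*\eta_{\pP}^{\e,\e'}$ on the nose.
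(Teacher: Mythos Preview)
Your proposal is correct and matches the paper's approach exactly: both unwind the definition $f_*(-)_{\pP}^{\e} = f_*f^*T_{\e}^*f_*$ and then whisker the counit $\chi\colon f_*f^* \Rightarrow \id_{\cat^{\pQ}}$ on the right by $T_{\e}^*f_*$ to obtain the desired map to $(f_*-)^{\e}_{\pQ} = T_{\e}^*f_*$. You go further than the paper by explicitly checking naturality in $M$ and in $\e$, which the paper leaves implicit; your arguments for these are correct.
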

\begin{proof}
  By using the co-unit of the adjunction in \cref{lem:push-pull-unit-counit}
  \[
    \chi \colon f_*f^*\Rightarrow \id_{\cat^{\pQ}}
  \]
  the stated natural transformation comes from
  \[
    f_*(-)_{\pP}^{\e}=f_*f^*T_{\e}^*f_* \Rightarrow T_{\e}^*f_* =: (f_* -)_{\pQ}^{\e}
  \]
\end{proof}

We now have all the necessary ingredients to state and prove our first main lemma of this section.
The statement of \cref{lem:Kan-extend-interleavings} is that with the setup above an $\e$-interleaving over $\pP$ always implies the existence of an $\e$-interleaving over $\pQ$. What makes this result non-obvious is that when interleaving over $\pP$ one has to apply the shift structure $(-)_{\pP}^{\e}$ twice, which involves iterating a pushforward and pullback operation.
Showing that these operations can be detangled to provide an interleaving over $\pQ$ is the content of the next result.

\begin{lem}\label{lem:Kan-extend-interleavings}
  Let $f\colon\pP \to \pQ$ be an arbitrary map of posets.
  If $M$ and $N$ are weakly $\e$-interleaved relative to $f$ then $f_*M$ and $f_*N$ are weakly $\e$-interleaved over $\pQ$.
  Said more succinctly:
  \begin{quote}
    \begin{center}
      \emph{Left Kan extensions preserve weak interleavings.}
    \end{center}
  \end{quote}
\end{lem}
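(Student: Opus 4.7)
The plan is to push the given interleaving morphisms forward along $f$ and then collapse the ambient extra layer of $f_* f^*$ using the counit $\chi$ from \cref{lem:push-pull-unit-counit}. More precisely, given a weak $\e$-interleaving relative to $f$ consisting of $\varphi \colon M \to (N)_{\pP}^{\e} = f^*T_{\e}^* f_* N$ and $\psi \colon N \to (M)_{\pP}^{\e} = f^*T_{\e}^* f_* M$, I would define
\[
  \tilde{\varphi} := \chi_{T_{\e}^* f_* N} \circ f_* \varphi \colon f_* M \longrightarrow (f_* N)_{\pQ}^{\e},
\]
and symmetrically $\tilde{\psi}$. This is exactly the morphism produced by applying $f_*$ to $\varphi$ and then invoking the natural transformation $f_*(-)_{\pP}^{\e} \Rightarrow (f_* -)_{\pQ}^{\e}$ of \cref{cor:shift-push-to-push-shift}.

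Next I would verify that $(\tilde{\varphi}, \tilde{\psi})$ assemble into a weak $\e$-interleaving of $f_* M$ and $f_* N$ over $\pQ$. The key idea is that every structural natural transformation appearing in the $\pQ$-pentagon ($\eta_{\pQ}^{0}$, $\eta_{\pQ}^{0, 2\e}$, and $\Sigma_{\pQ}^{\e, \e}$) is, by the very construction in \cref{prop:shift-over-P-structure-maps}, obtained from the corresponding $\pP$-side natural transformation by applying $f^*(-)f_*$ plus whitewashing with the unit $\upsilon$ and counit $\chi$. Consequently, if one first applies $f_*$ to the commutative pentagon over $\pP$ (which remains commutative since $f_*$ is a functor) and then composes the appropriate edges with naturality instances of $\chi$, the resulting diagram of $\pQ$-modules is precisely the $\pQ$-pentagon whose commutativity we need.

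Carrying out this chase, the main things to check are twofold. First, at the top of the pentagon one must show that the composite
\[
  f_*\bigl(((M)_{\pP}^{\e})_{\pP}^{\e}\bigr) \;\longrightarrow\; \bigl((f_* M)_{\pQ}^{\e}\bigr)_{\pQ}^{\e} \;\xrightarrow{\Sigma_{\pQ}^{\e,\e}}\; (f_* M)_{\pQ}^{2\e}
\]
agrees with $f_*\bigl(\Sigma_{\pP}^{\e,\e}\bigr)$ followed by the canonical comparison $f_*(M)_{\pP}^{2\e} \to (f_* M)_{\pQ}^{2\e}$; this is immediate from the definition of $\Sigma_{\pP}^{\e,\e}$ in \cref{prop:shift-over-P-structure-maps} as an instance of $\Sigma_{\pQ}^{\e,\e}$ sandwiched with $f^*, f_*$ and $\chi$. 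Second, along the bottom edges one must show that $\chi$ intertwines $f_* \eta_{\pP}^{0}$ with $\eta_{\pQ}^{0}$, which again follows directly from how $\eta_{\pP}^{0}$ was constructed out of $\eta_{\pQ}^{0}$.

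The only delicate step I anticipate is bookkeeping the iterated shift $((-)_{\pP}^{\e})_{\pP}^{\e}$, which expands to $f^* T_{\e}^* f_* f^* T_{\e}^* f_*$ and contains an ``internal'' occurrence of $f_* f^*$ that must be killed by an internal $\chi$ before the outer comparison to $T_{\e}^* T_{\e}^* f_* = ((f_*-)_{\pQ}^{\e})_{\pQ}^{\e}$ can be applied. This bookkeeping is handled cleanly by the explicit factorization of $\Sigma_{\pP}^{\e,\e}$ already given in \cref{prop:shift-over-P-structure-maps}, so no appeal to the triangle identities is required here; naturality of $\chi$ suffices.
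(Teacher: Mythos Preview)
Your proposal is correct and follows essentially the same route as the paper: define $\tilde{\varphi}$ (the paper calls it $\hat{\varphi}$) as $\chi \circ f_*\varphi$ via \cref{cor:shift-push-to-push-shift}, then verify the $\pQ$-pentagon by applying $f_*$ to the $\pP$-pentagon and collapsing the extra $f_*f^*$ layers with naturality of $\chi$, with the compatibility of $\Sigma_{\pP}^{\e,\e}$ and $\Sigma_{\pQ}^{\e,\e}$ coming from \cref{prop:shift-over-P-structure-maps}. Your anticipation of the ``internal $\chi$'' bookkeeping in the iterated shift is exactly the content of the paper's two-row diagram and its bottom-right triangle.
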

\begin{proof}
  Assume $M$ and $N$ are $\pP$-modules and $\varphi\colon M \to (N)_{\pP}^{\e}$ and $\psi\colon N \to (M)_{\pP}^{\e}$ are our interleaving morphisms.
  By applying the pushforward along $f$ functor, we get
  \[
    f_*\varphi \colon f_*M \to f_*(N)_{\pP}^{\e}.
  \]
  Now, by virtue of \cref{cor:shift-push-to-push-shift}, we have a morphism
  \[
    f_*(N)_{\pP}^{\e} \to (f_*N)_{\pQ}^{\e},
  \]
  which is defined by evaluating the co-unit $\chi \colon f_*f^*\Rightarrow \id$ on $(f_*N)_{\pQ}^{\e}$.
  Let $\hat{\varphi}$ be the composition of these two morphisms
  \[
    \hat{\varphi}\colon f_*M \to f_*(N)_{\pP}^{\e} \to (f_*N)_{\pQ}^{\e}
  \]
  and define $\hat{\psi}$ analogously.
  We claim these form an interleaving pair for $f_*M$ and $f_*N$ over $\pQ$.

  We check the appropriate pentagon that uses $\hat{\varphi}$ commutes, since the corresponding pentagon for $\hat{\psi}$ will commute using the exact same argument with $M$ and $N$ reversed.
  \[
  \begin{tikzcd}
  f_*M \ar[r, "f_*\varphi"] \ar[dr,"\hat{\varphi}"] & f_*(N)_{\pP}^{\e} \ar[r, "f_*\psi^{\e}"]\ar[d,"\chi"]& f_*((M)_{\pP}^{\e})_{\pP}^{\e})\ar[r] \ar[d,"\chi"]& f_*(M)_{\pP}^{2\e} \ar[d,"\chi"]\\
  & (f_*N)_{\pQ}^{\e} \ar[r, "(f_*\psi)^{\e}"]\ar[dr, "\hat{\psi}^{\e}"]& (f_*(M)_{\pP}^{\e})_{\pQ}^{\e} \ar[r]\ar[d,"\chi"]& (f_*M)_{\pQ}^{2\e}\\
  & & ((f_*M)_{\pQ}^{\e})_{\pQ}^{\e} \ar[ur] &
  \end{tikzcd}
  \]
  Note that the second row is obtained from the first row by applying \cref{cor:shift-push-to-push-shift}, so commutativity of the top two rows is immediate.
  All of the triangles commute by definition.
  In particular, the bottom right triangle of the above diagram commutes by \cref{prop:shift-over-P-structure-maps} and using \cref{cor:shift-push-to-push-shift}.
\end{proof}

We now show that a weak interleaving defined over $\pQ$ always restricts to a relative interleaving over $\pP$.
We note that it is unclear if the following lemma holds when weak interleavings are replaced with standard interleavings.

\begin{lem}\label{lem:full-restricts-interleavings}
    Let $f\colon \pP \to \pQ$ be a map of posets.
    If $f_*M$ and $f_*N$ are weakly $\e$-interleaved over $\pQ$ then $M$ and $N$ are weakly $\e$-interleaved relative to $f$.
    Said more succinctly:
    \begin{quote}
      \begin{center}
        \emph{For any map of posets, weak interleavings restrict.}
      \end{center}
    \end{quote}
\end{lem}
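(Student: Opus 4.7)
Given a weak $\e$-interleaving over $\pQ$ with structural maps $\hat{\varphi}\colon f_*M \to (f_*N)_{\pQ}^{\e}$ and $\hat{\psi}\colon f_*N \to (f_*M)_{\pQ}^{\e}$, the plan is to transport these along the adjunction $(f_*, f^*)$ established in \cref{lem:push-pull-unit-counit}. Concretely, define their mates
\[
\varphi := f^*\hat{\varphi} \circ \upsilon_M \colon M \to f^*f_*M \to f^*T_{\e}^*f_*N = (N)_{\pP}^{\e}
\qquad \text{and} \qquad
\psi := f^*\hat{\psi} \circ \upsilon_N \colon N \to (M)_{\pP}^{\e}.
\]
These are the canonical candidates for a relative $\e$-interleaving, so the whole task is to verify that pentagonal commutativity of $(\hat{\varphi},\hat{\psi})$ over $\pQ$ descends to pentagonal commutativity of $(\varphi,\psi)$ relative to $f$.

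By the symmetry $M \leftrightarrow N$, $\varphi \leftrightarrow \psi$, $\hat{\varphi} \leftrightarrow \hat{\psi}$, it suffices to verify the $M$-pentagon, namely $\Sigma_{\pP}^{\e,\e}\circ (\psi)_{\pP}^{\e} \circ \varphi = \eta_{\pP}^{0,2\e} \circ \eta_{\pP}^{0}$ as maps $M \to (M)_{\pP}^{2\e}$. Using the formulas in \cref{prop:shift-over-P-structure-maps}, the right-hand side is simply $f^*(\eta_{\pQ}^{0,2\e} \circ \eta_{\pQ}^0)_{f_*M} \circ \upsilon_M$, while expanding $\Sigma_{\pP}^{\e,\e}$ (which by \cref{prop:shift-over-P-structure-maps} factors as $f^*\Sigma_{\pQ}^{\e,\e}f_* \circ f^*T_{\e}^*\chi_{T_{\e}^*f_*(-)}$) and using $(\psi)_{\pP}^{\e} = f^*T_{\e}^*f_*\psi$ rewrites the left-hand side as
\[
f^*\Sigma_{\pQ, f_*M}^{\e,\e} \circ f^*T_{\e}^*\bigl(\chi_{T_{\e}^*f_*M} \circ f_*\psi \bigr) \circ f^*\hat{\varphi} \circ \upsilon_M.
\]
The crucial step is to identify the bracketed expression with $\hat{\psi}$. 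This follows from naturality of $\chi$ together with the triangle identity $\chi_{f_*N} \circ f_*\upsilon_N = \id_{f_*N}$ recorded in \cref{rmk:triangle-identities}:
\[
\chi_{T_{\e}^*f_*M} \circ f_*(f^*\hat{\psi} \circ \upsilon_N)
= \hat{\psi} \circ \chi_{f_*N} \circ f_*\upsilon_N
= \hat{\psi}.
\]
Substituting this back collapses the left-hand side to $f^*\bigl(\Sigma_{\pQ}^{\e,\e} \circ (\hat{\psi})_{\pQ}^{\e} \circ \hat{\varphi}\bigr)_{f_*M} \circ \upsilon_M$, and the assumed pentagonal commutativity of $(\hat{\varphi},\hat{\psi})$ over $\pQ$ identifies the inner composition with $\eta_{\pQ}^{0,2\e}\circ\eta_{\pQ}^{0}$ at $f_*M$, matching the right-hand side.

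\textbf{Main obstacle.} The proof is essentially a long bookkeeping exercise in natural transformations, and the one non-trivial maneuver is recognizing that the counit $\chi$ hidden inside the definition of $\Sigma_{\pP}^{\e,\e}$ is precisely what is needed to absorb the unit $\upsilon_N$ introduced in the definition of the mate $\psi$, leaving the raw morphism $\hat{\psi}$ by the triangle identity. Once this cancellation is in place everything else is functoriality of $f^*$, and the $N$-pentagon is immediate by the obvious relabeling. It is worth flagging that the argument uses only the \emph{weak} pentagon; attempting the analogous statement for the standard (triangle) interleaving of \cref{defn:R-interleaving} is precisely what fails, which is consistent with the remark preceding the lemma.
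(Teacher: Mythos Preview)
Your proposal is correct and follows essentially the same route as the paper's proof: define the relative interleaving maps as the adjoint mates $f^*(\text{--})\circ\upsilon$ of the given $\pQ$-interleaving maps, then verify the $M$-pentagon by expanding $\Sigma_{\pP}^{\e,\e}$ via \cref{prop:shift-over-P-structure-maps}, using naturality of $\chi$ to slide it past $f_*f^*\hat{\psi}$, and invoking the triangle identity $\chi_{f_*N}\circ f_*\upsilon_N=\id_{f_*N}$ to collapse the composite back to $\hat{\psi}$. The only difference from the paper is cosmetic---you swap the hat/no-hat convention for the $\pP$- versus $\pQ$-level maps---and your presentation is somewhat more streamlined in that you state the target equation up front and reduce directly to the $\pQ$-pentagon rather than building up the diagram piecewise.
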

\begin{proof}
  We now show that an interleaving of pushforward modules over $\pQ$ provides an intrinsic interleaving the original $\pP$ modules.
  Suppose $\varphi\colon f_*M \to (f_* N)^{\e}_{\pQ}$ and $\psi\colon f_*N \to (f_*M)^{\e}_{\pQ}$ are the defining morphisms in a weak interleaving over $\pQ$.
  As we did above, we start by considering the commuting pentagon used to define an interleaving over $\pQ$ involving $\varphi$ first.
  \[
  \begin{tikzcd}
  f_*M \ar[r, "\varphi"] \ar[dd] & (f_*N)_{\pQ}^{\e} \ar[r, "\psi^{\e}"] & ((f_*M)_{\pQ}^{\e})_{\pQ}^{\e} \ar[dd] \\
  & & \\
  (f_*M)^0_{\pQ} \ar[rr] & & (f_*M)^{2\e}_{\pQ}
  \end{tikzcd}
  \]
  Applying $f^*$ we get
  \[
  \begin{tikzcd}
  f^*f_*M \ar[r, "f^*\varphi"] \ar[dd] & f^*(f_*N)_{\pQ}^{\e} \ar[r, "f^*\psi^{\e}"] & f^*((f_*M)_{\pQ}^{\e})_{\pQ}^{\e} \ar[dd] \\
  & & \\
  f^*(f_*M)^0_{\pQ} \ar[rr] & & f^*(f_*M)^{2\e}_{\pQ}
  \end{tikzcd}
  \]
  Now we make two observations.
  First, we use the unit from \cref{lem:push-pull-unit-counit} to observe that there is a natural map from $M$ to $f^*f_*M$.
  Second, we note that $(M)^{\e'}_{\pP}$ is by definition $f^*(f_*M)^{\e'}_{\pQ}$ for any $\e'\geq 0$.
  Consequently we augment and rewrite the above diagram as
  \[
  \begin{tikzcd}
  M \ar[r,"\upsilon"] & f^*f_*M \ar[r, "f^*\varphi"] \ar[dd] & (N)_{\pP}^{\e} \ar[r, "f^*\psi^{\e}"] & f^*((f_*M)_{\pQ}^{\e})_{\pQ}^{\e} \ar[dd] \\
  & & & \\
  & (M)^0_{\pP} \ar[rr] & & (M)^{2\e}_{\pP}
  \end{tikzcd}
  \]
  The claim is that $\hat{\varphi}:=(f^*\varphi)\circ \upsilon_M$ and $\hat{\psi}:=(f^*\psi)\circ \upsilon_N$ are the defining morphisms in an interleaving of $M$ and $N$ over $\pP$.
  The proof of this fact is nearly complete, except we have not incorporated the morphism $\Sigma_{\pP}^{\e,\e}\colon ((M)^{\e}_{\pP})^{\e}_{\pP} \to (M)^{2\e}_{\pP}$.
  Fortunately, we already noted above that this morphism is defined by the composition
  \[
    ((M)_{\pP}^{\e})_{\pP}^{\e} \to f^*((f_*M)^{\e}_{\pQ})^{\e}_{\pQ} \to (M)_{\pP}^{2\e}
  \]
  This means that we can add a commuting triangle with one ``missing'' dashed arrow to our above diagram to obtain:
  \[
  \begin{tikzcd}
  M \ar[r, "\hat{\varphi}"] \ar[dd] & (N)_{\pP}^{\e} \ar[r, "f^*\psi^{\e}"] \ar[d,dashed] & f^*((f_*M)_{\pQ}^{\e})_{\pQ}^{\e} \ar[dd] \\
  & ((M)^{\e}_{\pP})^{\e}_{\pP} \ar[ur] \ar[dr] & \\
  (M)^0_{\pP} \ar[rr] & & (M)^{2\e}_{\pP}
  \end{tikzcd}
  \]
  This dashed arrow is obtained by applying the ``shift over $\pP$'' functor to $\hat{\psi}:=(f^*\psi)\circ \upsilon_N$, i.e. $\hat{\psi}^{\e}:= f^*T^*_{\e}f_* \hat{\psi}$ is the dashed arrow above that commutes with all the existing arrows above.
  Commutativity of the upper triangle holds requires that
  \[
    (f^*T_{\e}^*\chi_{T_{\e}^*f_*M})\circ f^*T^*_{\e}f_*(f^*\psi \circ \upsilon_N) = f^*T^*_{\e}\psi.
  \]
  To see this we rewrite the left hand side as
  \[
    f^*T_{\e}^*(\chi_{T^*_{\e}f_*M} \circ f_*f^*\psi \circ f_*\upsilon_N) = f^*T_{\e}^*(\psi \circ \chi_{f_*N} \circ f_*\upsilon_N).
  \]
  This rewriting comes from the commutative square
  \[
    \begin{tikzcd}
      f_*N \arrow[r,"\psi"] & T_{\e}^*f_*M \\
      f_*f^*f_*N \arrow[u,"\chi_{f_*N}"'] \arrow[r,"f_*f^*\psi"] & f_*f^*T_{\e}^*f_*M \arrow[u,"\chi_{T^*_{\e}f_*M}"]
    \end{tikzcd}
  \]
  Recalling \cref{rmk:triangle-identities} we have the commutative triangle
  \[
      \begin{tikzcd}
        f_* N \ar[r,"f_*\upsilon_N"] \ar[rd,"\id_{f_*N}"'] & f_*f^*f_* N \ar[d,"\chi_{f_*N}"] \\
              & f_*N
      \end{tikzcd}
  \]
  that permits us to rewrite
  \[
    f^*T_{\e}^*(\psi \circ \chi_{f_*N} \circ f_*\upsilon_N) = f^*T_{\e}^*(\psi \circ \id_{f_*N}) = f^*T_{\e}^*\psi = f^*\psi^{\e},
  \]
  which was wanted.
\end{proof}

We put \cref{lem:Kan-extend-interleavings} and \cref{lem:full-restricts-interleavings} together to state our main theorem of this section.

\begin{thm}\label{thm:extend-restrict-interleavings}
  Let a poset $\pQ$ be equipped with a superlinear family of translations $T_{\bullet}$ as in \cref{defn:family-of-translations}.
  This induces a weak interleaving distance on $\Fun(\pQ,\cat)$ in the sense of \cref{defn:weak-interleaving} and
  any poset map $f\colon \pP\to\pQ$ induces a weak relative interleaving distance on $\Fun(\pP,\cat)$ as in \cref{defn:relative-weak-interleaving}.

  \emph{For any map of posets $f\colon \pP \to \pQ$, the pushforward functor
      \[
        f_* \colon \Fun(\pP,\cat) \to \Fun(\pQ,\cat) \qquad \text{where} \qquad M \squigrightarrow f_*M
      \]
      induces an isometry onto its image, meaning that if $M$ and $N$ are $\pP$-modules, then
      \[
        d_{\pP}(M,N)=d_{\pQ}(f_*M,f_*N).
      \]}
\end{thm}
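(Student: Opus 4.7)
The plan is to obtain the isometry identity by combining the two preceding lemmas and showing each inequality separately. Concretely, I will show
\[
  d_{\pP}(M,N) \leq d_{\pQ}(f_*M,f_*N) \qquad \text{and} \qquad d_{\pQ}(f_*M,f_*N) \leq d_{\pP}(M,N),
\]
using \cref{lem:full-restricts-interleavings} for the first and \cref{lem:Kan-extend-interleavings} for the second.

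For the first inequality, I would fix any $\e > d_{\pQ}(f_*M,f_*N)$. By definition of the infimum in \cref{defn:weak-interleaving}, there exists a weak $\e$-interleaving of $f_*M$ and $f_*N$ over $\pQ$. Then \cref{lem:full-restricts-interleavings} produces a weak $\e$-interleaving of $M$ and $N$ relative to $f$, so $d_{\pP}(M,N) \leq \e$. Taking the infimum over admissible $\e$ yields $d_{\pP}(M,N) \leq d_{\pQ}(f_*M,f_*N)$. For the reverse inequality, I would fix $\e > d_{\pP}(M,N)$ and extract a weak $\e$-interleaving of $M$ and $N$ relative to $f$ from \cref{defn:relative-weak-interleaving}. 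Then \cref{lem:Kan-extend-interleavings} upgrades this to a weak $\e$-interleaving of $f_*M$ and $f_*N$ over $\pQ$, giving $d_{\pQ}(f_*M,f_*N) \leq \e$ and hence $d_{\pQ}(f_*M,f_*N) \leq d_{\pP}(M,N)$.

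Putting both inequalities together yields the claimed equality $d_{\pP}(M,N) = d_{\pQ}(f_*M,f_*N)$, and since this holds for every pair of $\pP$-modules, it exhibits $f_*$ as an isometry from $(\Fun(\pP,\cat),d_{\pP})$ onto its image in $(\Fun(\pQ,\cat),d_{\pQ})$. Since the two lemmas each do the substantive work of constructing interleavings on one side from interleavings on the other, essentially no new technical obstacle should arise here; the only item worth noting is the mild care needed with infima (working with all $\e$ strictly larger than the respective distance and passing to the limit), which is routine. I would also remark that the statement is precisely an isometry \emph{onto its image}, not a global isometry on $\Fun(\pQ,\cat)$, since the pushforward is not essentially surjective in general.
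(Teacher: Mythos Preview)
Your proposal is correct and follows essentially the same approach as the paper: both arguments combine \cref{lem:Kan-extend-interleavings} and \cref{lem:full-restricts-interleavings} to obtain the two opposite inequalities, with the only difference being stylistic (the paper phrases the direction $d_{\pP}(M,N)\geq d_{\pQ}(f_*M,f_*N)$ as a short proof by contradiction rather than your direct infimum argument). The care you flag about infima is indeed routine and is treated just as informally in the paper.
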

\begin{proof}
  If $M$ and $N$ are $\e$-interleaved over $\pP$ then \cref{lem:Kan-extend-interleavings} guarantees that $f_*M$ and $f_*N$ are $\e$-interleaved over $\pQ$.
  This means that
  \[
    d_{\pP}(M,N)\geq d_{\pQ}(f_*M,f_*N).
  \]
  To see why, suppose for contradiction that $d_{\pP}(M,N)< d_{\pQ}(f_*M,f_*N)$. This implies there exists an $\e'$ for which $M$ and $N$ are $\e'$-interleaved over $\pP$, but for which $f_*M$ and $f_*N$ are not $\e'$-interleaved over $\pQ$.
  However this would contradict \cref{lem:Kan-extend-interleavings} so this is impossible.

  Similarly for $f\colon\pP\rightarrow\pQ$ \cref{lem:full-restricts-interleavings} implies that for every $\e$-interleaving of $f_*M$ and $f_*N$ there is an $\e$-interleaving of $M$ and $N$.
  This implies that
  \[
    d_{\pQ}(f_*M,f_*N) \geq d_{\pP}(M,N).
  \]
  This proves the stated claim.
\end{proof}

We now note an important special case of this result in the setting where $\pQ=\Down(\pP)$ is the lattice of down sets in $\pP$.
Notice that $\iota\colon\pP \hookrightarrow \Down(\pP)$ is a full and faithful map of posets.
If $\pP$ is equipped with a superlinear family of translations $T_{\bullet}$ then we can define a superlinear family of translations of down sets in $\pP$ via the formula:
\[
  \forall S\in \Down(\pP) \qquad \text{let} \qquad T_{\e}(S):=\cup_{p\in S} D_{T_{e}(p)}
\]
Following the above constructions we can shift a $\pP$-module $M$ in two equivalent ways
\[
  \iota^*T_{\e}^* \iota_* M \qquad \text{or} \qquad T_{\e}^*M.
\]
Indeed $(\iota^*T_{\e}^* \iota_* M)(p)= (T_{\e}^*M)(p)$ for all $p\in\pP$.
As such, we obtain the following corollary of \cref{thm:extend-restrict-interleavings}.

\begin{cor}\label{cor:down-set-isometry}
  Let $\pP$ be a poset equipped with a superlinear family of translations $T_{\bullet}$.
  The category of $\pP$-modules $\Fun(\pP,\cat)$ embeds fully, faithfully and isometrically into the category of modules over $\Down(\pP)$, i.e. $\Fun(\Down(\pP),\cat)$.
\end{cor}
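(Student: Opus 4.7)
The plan is to deduce the corollary by assembling three results already in hand: \cref{thm:extend-restrict-interleavings} (pushforward along any poset map is an isometry onto its image, relative to $f$), \cref{lem:push-pull-unit-counit} (the $(\iota_*,\iota^*)$ adjunction), and \cref{cor:push-pull-push-iso} (the unit $\upsilon\colon\id \Rightarrow \iota^*\iota_*$ is a natural isomorphism when $\iota$ is full and faithful).

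First I would verify the ambient setup. The map $\iota\colon\pP \hookrightarrow \Down(\pP)$, $p\mapsto D_p$, is full and faithful because $p\leq q \Leftrightarrow D_p \subseteq D_q$. The prescription $T_\e(S) := \bigcup_{p \in S} D_{T_\e(p)}$ inherits monotonicity, the pointed property $S \subseteq T_\e(S)$ (from $p \leq T_\e(p)$), and superlinearity directly from the family on $\pP$, so \cref{thm:extend-restrict-interleavings} applies with $f=\iota$. Next I would verify the pointwise identification $(\iota^* T_\e^* \iota_* M)(p) = (T_\e^* M)(p)$ noted just before the corollary. By the colimit formula for pushforward, $(\iota_* M)(S) = \varinjlim_{q \in S} M(q)$, and on $S = T_\e(D_p) = D_{T_\e(p)}$ the index has the maximum element $T_\e(p)$, so the colimit collapses to $M(T_\e(p))$. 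Thus the relative weak interleaving distance $d_\iota(M,N)$ coincides with the intrinsic weak interleaving distance $d_\pP(M,N)$ induced by $T_\bullet$, and the isometry
\[
d_\pP(M,N) = d_{\Down(\pP)}(\iota_* M, \iota_* N)
\]
is then immediate from \cref{thm:extend-restrict-interleavings}.

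For the fully faithful claim, I would use that $\iota_*$ is left adjoint to $\iota^*$ (by \cref{lem:push-pull-unit-counit}) together with the isomorphism $\upsilon_N\colon N \to \iota^*\iota_* N$ (by \cref{cor:push-pull-push-iso}). Combining these yields the natural bijection
\[
\text{Hom}_{\Fun(\Down(\pP),\cat)}(\iota_* M, \iota_* N) \;\cong\; \text{Hom}_{\Fun(\pP,\cat)}(M, \iota^*\iota_* N) \;\cong\; \text{Hom}_{\Fun(\pP,\cat)}(M, N),
\]
where the second isomorphism is post-composition with $\upsilon_N^{-1}$. This is exactly the statement that $\iota_*$ is fully faithful on module categories.

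The main obstacle, such as it is, lies in the colimit computation that identifies the relative shift $\iota^* T_\e^* \iota_* M$ with the intrinsic shift $T_\e^* M$; once this is accepted, the corollary is a direct consequence of the adjunction machinery of \cref{lem:push-pull-unit-counit} and \cref{cor:push-pull-push-iso} together with the isometry theorem \cref{thm:extend-restrict-interleavings}.
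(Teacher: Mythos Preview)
Your proposal is correct and follows essentially the same route as the paper: the discussion preceding the corollary already records that $\iota$ is full and faithful, defines the induced translation on $\Down(\pP)$, and notes the pointwise identity $(\iota^*T_{\e}^*\iota_*M)(p)=(T_{\e}^*M)(p)$, then declares the result an immediate consequence of \cref{thm:extend-restrict-interleavings}. You supply more detail than the paper does---in particular the explicit argument for full faithfulness of $\iota_*$ via the adjunction and \cref{cor:push-pull-push-iso}, which the paper leaves implicit---but the underlying strategy is identical.
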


\begin{rmk}
  There is more to say here. The image of $\iota_*$ in $\Fun(\Down(\pP),\cat)$ can be identified with the category of \emph{cosheaves} over $\pP$.
  This implies that the interleaving distance for cosheaves over a poset can be computed using pointwise data.
\end{rmk}

\section{Approximation of a Module by its Pixelization}
\label{sec:approximation}

\cref{sec:relative-interleavings} can be viewed as providing a theory for how to define the interleaving distance between modules over $\pP$ when we're given a map $f\colon\pP\to\pQ$ and an interleaving distance between modules over $\pQ$.
Roughly speaking, this theory says that we should take two $\pP$-modules $M$ and $N$, push them forward to be $\pQ$-modules $f_*M$ and $f_*N$, then use shift structures over $\pQ$ and restriction along $f$ in order to define interleavings over $\pP$.
The content of \cref{lem:Kan-extend-interleavings} and \cref{lem:full-restricts-interleavings} is that it doesn't matter if we extend and work totally over $\pQ$ or if we restrict whenever possible to $\pP$, when defining interleavings.

We now consider the opposite end of this relative theory by focusing on modules defined over the codomain of the poset map $f\colon\pP \to \pQ$. Two natural questions emerge:
\begin{enumerate}
  \item[{\bf Q1:}] Can we use the relative interleaving calculation over $\pP$ to infer the interleaving distance over $\pQ$? This would be especially helpful when the poset $\pQ$ is uncountable, such as the collection of open intervals in $\R$, and when $\pP$ is discrete.
  \item[{\bf Q2:}] How much distortion occurs when we pullback and pushforward $M$? In other words, what hypotheses can we put on $f$ in order to bound the interleaving distance between a module and its pixelization, i.e.~$d_{\pQ}(M,f_*f^*M)$?
\end{enumerate}
We show that the first question can be reduced to the second question.
The second question is most naturally addressed when we assume that $\pP$ has the extra structure of being a lattice.
This will bring us back to considering our pushforward with open supports functor $f_{\dagger}$.

\subsection{Bounding Distortion Using the Triangle Inequality}

Suppose we restrict two $\pQ$-modules $M$ and $N$ along $f$ to obtain an $\e$-interleaving of $f^*M$ and $f^*N$ over $\pP$.
By virtue of \cref{lem:Kan-extend-interleavings} we know that $f_*f^*M$ and $f_*f^*N$ are $\e$-interleaved over $\pQ$, but it's possible that $N$ and $M$ are interleaved for smaller values of $\e$.
By the triangle inequality we have that
\[
d_{\pQ}(M,N) \leq d_{\pQ}(M,f_*f^*M) + d_{\pQ}(f_*f^*M, f_*f^*N) + d_{\pQ}(f_*f^*N,N).
\]
However, \cref{thm:extend-restrict-interleavings} provides us with the identity
\[
  d_{\pQ}(f_*f^*M,f_*f^*N)=d_{\pP}(f^*M,f^*N),
\]
which allows us to rephrase Question 1 in terms of Question 2.

\begin{lem}\label{lem:interleaving-distortion}
  If $f\colon\pP\rightarrow \pQ$ is a map of posets then following the set up in \cref{thm:extend-restrict-interleavings} we can conclude that
  \[
    |d_{\pQ}(M,N)-d_{\pP}(f^*M,f^*N)|\leq d_{\pQ}(M,f_*f^*M) + d_{\pQ}(N, f_*f^*N).
  \]
\end{lem}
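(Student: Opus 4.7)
The plan is to apply the triangle inequality for the weak interleaving distance twice, once in each direction, and then collapse an ``inner'' term using \cref{thm:extend-restrict-interleavings}.

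For the upper bound $d_{\pQ}(M,N) - d_{\pP}(f^*M,f^*N) \leq d_{\pQ}(M,f_*f^*M) + d_{\pQ}(N,f_*f^*N)$, I would apply the triangle inequality along the chain
\[
  M \;\squigrightarrow\; f_*f^*M \;\squigrightarrow\; f_*f^*N \;\squigrightarrow\; N
\]
to obtain $d_{\pQ}(M,N) \leq d_{\pQ}(M,f_*f^*M) + d_{\pQ}(f_*f^*M,f_*f^*N) + d_{\pQ}(f_*f^*N,N)$. Then \cref{thm:extend-restrict-interleavings} identifies $d_{\pQ}(f_*f^*M,f_*f^*N) = d_{\pP}(f^*M,f^*N)$, after which I rearrange.

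For the reverse bound $d_{\pP}(f^*M,f^*N) - d_{\pQ}(M,N) \leq d_{\pQ}(M,f_*f^*M) + d_{\pQ}(N,f_*f^*N)$, I would first rewrite the left-hand side by \cref{thm:extend-restrict-interleavings} as $d_{\pQ}(f_*f^*M,f_*f^*N) - d_{\pQ}(M,N)$ and then apply the triangle inequality along
\[
  f_*f^*M \;\squigrightarrow\; M \;\squigrightarrow\; N \;\squigrightarrow\; f_*f^*N,
\]
which yields exactly the desired inequality. Combining the two one-sided bounds gives the absolute value estimate.

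The main obstacle, and the only step beyond bookkeeping, is verifying that the weak interleaving distance $d_{\pQ}$ itself satisfies the triangle inequality. Given a weak $\e$-interleaving $(\varphi_1,\psi_1)$ between $M$ and $N$ and a weak $\delta$-interleaving $(\varphi_2,\psi_2)$ between $N$ and $P$, I would construct the weak $(\e+\delta)$-interleaving morphisms by setting $\varphi:=\Sigma^{\e,\delta}_{\pQ}\circ \varphi_2^{\e}\circ \varphi_1 \colon M \to (P)_{\pQ}^{\e+\delta}$ and analogously $\psi$. The required pentagons then follow by pasting the two given pentagons and invoking the superlinear structure maps $\Sigma^{\bullet,\bullet}_{\pQ}$ together with naturality of $\eta^{0,\bullet}_{\pQ}$; this is essentially the standard argument (see e.g.\ \cite{de2018theory}) showing that $d_{\pQ}$ is a (possibly extended) pseudometric, and once that is in hand the proof reduces to the two triangle-inequality applications above and finishes immediately.
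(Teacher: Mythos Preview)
Your proposal is correct and follows essentially the same argument as the paper: apply the triangle inequality along the chain $M \to f_*f^*M \to f_*f^*N \to N$ and its reverse, then collapse the middle term via \cref{thm:extend-restrict-interleavings}. The paper simply takes the triangle inequality for $d_{\pQ}$ as given rather than sketching its proof, but otherwise the two arguments are identical.
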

\begin{proof}
  First we note that the identity $d_{\pQ}(f_*f^*M,f_*f^*N)=d_{\pP}(f^*M,f^*N)$ follows from
  \cref{thm:extend-restrict-interleavings} by setting $M'=f^*M$ and $N'=f^*N$ and using $M'$ and $N'$ in the statement of the theorem.
  Now the triangle inequality says
  \[
  d_{\pQ}(M,N) \leq d_{\pQ}(M,f_*f^*M) + d_{\pQ}(f_*f^*M, f_*f^*N) + d_{\pQ}(N, f_*f^*N),
  \]
  which implies, by using the above identity,
  \[
  d_{\pQ}(M,N) - d_{\pP}(f^*M,f^*N) \leq d_{\pQ}(M,f_*f^*M) + d_{\pQ}(N, f_*f^*N).
  \]
  Now we can use the triangle inequality reversing $M$ and $N$ and their pixelizations.
  \[
  d_{\pP}(f^*M,f^*N):= d_{\pQ}(f_*f^*M, f_*f^*N) \leq d_{\pQ}(f_*f^*M,M) + d_{\pQ}(M,N) + d_{\pQ}(N,f_*f^*N)
  \]
  Invoking the symmetry of the interleaving distance implies
  \[
  d_{\pP}(f^*M,f^*N)-d_{\pQ}(M,N) \leq d_{\pQ}(M,f_*f^*M) + d_{\pQ}(N, f_*f^*N),
  \]
  which implies the stated inequality:
  \[
    |d_{\pQ}(M,N)-d_{\pP}(f^*M,f^*N)|\leq d_{\pQ}(M,f_*f^*M) + d_{\pQ}(N, f_*f^*N)
  \]
\end{proof}

\cref{lem:interleaving-distortion} implies that the distortion in the interleaving distance is bounded by the distance between a module and its pixelization.
We now develop a general setup where we can bound this distance.

\subsection{Pulling Back to a Lattice}
\label{sec:lattice-approximation}

One of the philosophical consequences of \cref{cor:down-set-isometry} is that we can always replace the study of interleavings of modules over a poset $\pP$ with interleavings of modules over an associated complete lattice, namely $\Down(\pP)$.
However, to afford us a more general treatment of applications we will not just work with the specific lattice of down sets in a poset, but rather use any complete lattice $\pL$ and a map $f\colon\pL \to \pQ$ that respects certain structures such as meets and joins.
We remind the reader of what this means.

\begin{defn}\label{defn:complete-lattice}
  We fix a poset $\pL$.
  If for every subset $S\subseteq L$ (including the empty subset) there is a least upper bound of $S$, called the \define{join} and written $\bigvee S$ or $\sup S$, then $\pL$ is a \define{complete join semilattice}.
  If the join only exists for finite subsets $S$, then we say $\pL$ is a \define{join semilattice}.
  Said differently, $\pL$ is a complete join semilattice if and only if it has arbitrary colimits.
  A poset is a join semilattice if and only if it has finite colimits.

  Dually, a poset $\pL$ is a \define{complete meet semilattice} if for every subset $S\subseteq \pL$ there is a greatest lower bound of $S$, which is called the \define{meet} and is written $\bigwedge S$ or $\inf S$.
  Similarly, if the meet only exists for finite subsets then we say $\pL$ is a \define{meet semilattice}.
  One can rephrase the existence of meets in terms of categorical limits.

  A \define{complete lattice} is a poset where arbitrary subsets have meets and joins.
\end{defn}

We want to consider poset maps that respect meets and joins in the domain even though the codomain might not be a lattice.

\begin{defn}\label{defn:respects-meets-joins}
  Suppose $f\colon\pL\to\pQ$ is a map of posets.
  Let $\pL$ be a complete join semilattice.
  In this setting we say that $f$ \define{respects joins} if whenever there is a $q$ such that $f(x) \leq q$ for all $x\in S\subseteq \pL$, then $f(\bigvee S) \leq q$.

  Dually, let $\pL$ be a complete meet semilattice.
  In this setting we say that $f$ \define{respects meets} if whenever there is a $q\in \pQ$ such that $f(x)\geq q$ for all $x\in S\subseteq \pL$, then $f(\bigwedge S)\geq q$.
\end{defn}

\begin{rmk}\label{rmk:preserve-colimits}
  The above conditions specialize to well-known conditions in the situation where $\pQ$ is a complete lattice and not just a poset.
  First we note that in general for a map or lattices $f\colon\pL \to \pQ$ we have
  \[
    \text{for } S \subseteq \pL \qquad f(\bigvee S) \geq \bigvee f(S).
  \]
  This follows from the observation that since $x\leq \sup S$ for all $x\in S$ the fact that $f$ is a map of posets implies that $f(x)\leq f(\sup S)=f(\bigvee S)$.
  In other words $f(\bigvee S)$ is an upper bound for $f(S)$, but it need not be the \emph{least} upper bound, which is $\bigvee f(S)$.
  The respects joins condition implies that $f(\bigvee S) \leq \bigvee f(S)$ and hence
  \[
    f \text{ respects joins } \qquad \Leftrightarrow \qquad f(\bigvee S) = \bigvee f(S).
  \]
  In other words, $f$ preserves colimits, i.e. it is a continuous functor between posets viewed as categories.
  This is also used as a \emph{definition} of a complete join semilattice homomorphism.

  Similarly, when $\pQ$ has meets, the respects meets condition reduces to the statement that $f$ preserves meets and is thus a complete meet semilattice homomorphism.
\end{rmk}

The structure of a complete join semilattice $\pL$ and a map $f\colon\pL\to\pQ$ that respects joins allows us to simplify the computation needed to describe the pushforward of an $\pL$-module.
Dually, the respects meets condition allows us to describe the pushforward with open supports functor much more cleanly.
These observations stem from simpler, lattice-theoretic origins, which stem from the philosophy of Galois connections; see Section 1.4.3 of~\cite{fong2019invitation} for a modern treatment.

\begin{defn}\label{prop:lattice-adjoint-joins}
  Any poset map $f\colon\pL \to \pQ$ from a complete join semilattice induces a map backwards $f_{\flat}\colon\pQ \to \pL$ via the assignment:
  \[
    f_{\flat}(q) = \bigvee f^{-1}(D_q) = \sup \{x \mid f(x) \leq q\}
  \]
  Moreover if $f$ respects joins then $f\circ f_{\flat} \leq \id_{\pQ}$.

  Dually, any poset map $f\colon\pL \to \pQ$ from a complete meet semilattice induces a map backwards $f_{\sharp}\colon\pQ \to \pL$ via the assignment:
  \[
    f_{\sharp}(q) = \bigwedge f^{-1}(U_q) = \inf\{x \mid f(x) \geq q\}
  \]
  Moreover if $f$ respects meets then $f\circ f_{\sharp} \geq \id_{\pQ}$.
\end{defn}

The existence of the above pair of maps $f_{\flat}$ and $f_{\sharp}$ allows us to phrase the two flavors of pushforwards in terms of pullbacks along these maps.

\begin{prop}\label{prop:pushforward-join}
  If $f\colon\pL\to\pQ$ respects joins then
  for any $\pL$-module $M$
  \[
    (f_*M)(q) := \varinjlim_{x\mid f(x)\leq q} M(x) \cong M(\bigvee f^{-1}(D_q))=:f_{\flat}^*M(q).
  \]
  Dually, if $f\colon\pL \to \pQ$ respects meets then for any $\pL$-module $M$
  \[
    (f_{\dagger}M)(q):= \varprojlim_{x \mid q \leq f(x)} M(x) \cong M(\bigwedge f^{-1}(U_q))=:f_{\sharp}^*M(q).
  \]
\end{prop}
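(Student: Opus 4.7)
The plan is to exploit the fact that the respects-joins hypothesis forces $f_\flat(q) = \bigvee f^{-1}(D_q)$ to actually lie in $f^{-1}(D_q)$, giving the diagram indexing the colimit a terminal object and collapsing the colimit to a single value. I will prove only the first statement in detail; the second follows by dualizing every arrow (or by running the same argument in $\pL^{\op}$).

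First I would set $S := f^{-1}(D_q) = \{x \in \pL \mid f(x) \leq q\}$ and $x^* := \bigvee S = f_\flat(q)$, which exists because $\pL$ is a complete join semilattice. Every $x \in S$ satisfies $f(x) \leq q$ by definition, so the respects-joins hypothesis (\cref{defn:respects-meets-joins}) applied to the family $S$ and bound $q$ yields $f(x^*) = f(\bigvee S) \leq q$. Thus $x^* \in S$, and since $x^*$ is also an upper bound for $S$, it is the \emph{maximum} element of the subposet $S \subseteq \pL$. In categorical language, $S$ (viewed as the slice category indexing the colimit $(f_*M)(q)$) has a terminal object.

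Next I would invoke the standard fact that the colimit of any diagram over a category with a terminal object is computed by evaluating the diagram at that terminal object. Concretely, for $x \in S$ the structural inclusion $x \leq x^*$ gives a morphism $M(x) \to M(x^*)$, and these morphisms together with $\id_{M(x^*)}$ exhibit $M(x^*)$ as the colimit cocone; any competing cocone $(M(x) \to C)_{x \in S}$ factors uniquely through $M(x^*) \to C$ by taking the component at $x^*$. Therefore
\[
(f_*M)(q) := \varinjlim_{x \mid f(x)\leq q} M(x) \;\cong\; M(x^*) \;=\; M\Bigl(\bigvee f^{-1}(D_q)\Bigr) =: f_\flat^*M(q),
\]
and one easily checks this isomorphism is natural in $q$ since $q \leq q'$ implies $f^{-1}(D_q) \subseteq f^{-1}(D_{q'})$ and hence $f_\flat(q) \leq f_\flat(q')$, so the internal maps of $f_*M$ and $f_\flat^*M$ agree under the pointwise identification.

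For the dual statement, the respects-meets hypothesis applied to the family $f^{-1}(U_q)$ gives $f(f_\sharp(q)) \geq q$, so $f_\sharp(q)$ is the \emph{minimum} of $\{x \mid q \leq f(x)\}$ and is therefore an initial object of the indexing diagram for the limit $(f_\dagger M)(q)$. The same collapse argument (in its limit form: limits over diagrams with initial objects are evaluated at the initial object) yields $(f_\dagger M)(q) \cong M(f_\sharp(q)) = f_\sharp^*M(q)$. The main obstacle, if any, is simply verifying that the respects-joins/meets hypothesis is exactly strong enough to place $f_\flat(q)$ (respectively $f_\sharp(q)$) inside the indexing set; everything else is the routine principle that colimits over diagrams with terminal objects trivialize.
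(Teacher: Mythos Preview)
Your proposal is correct and follows essentially the same approach as the paper. The paper phrases the key step as verifying that the inclusion of the single object $x^*$ into the comma category $(f\downarrow q)$ is \emph{cofinal} (checking nonemptiness and connectedness of the relevant comma categories), whereas you phrase it as $x^*$ being a \emph{terminal object} of the indexing poset; these are equivalent formulations of the same fact, and both collapse the colimit to $M(x^*)$ in the identical way.
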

\begin{proof}
  Recall that the $D_q$ above is the principal down set at $q$ so that $f^{-1}(D_q)=\{x \mid f(x) \leq q \}$.
  Since $\pL$ is a \emph{complete} lattice we can conclude that $f^{-1}(D_q)$ has a supremum, denoted $x^{\ast}:=\bigvee f^{-1}(D_q)$.
  This means that whenever $f(x)\leq q$ then we immediately know that $x\leq x^{\ast}$.
  The \emph{respects joins} condition implies that $f(x^{\ast})\leq q$ as well, so $x^{\ast}$ is in the comma category $(f\downarrow q)$.
  We now verify that the inclusion of the supremum
  \[
    i_q\colon x^{\ast}=\bigvee f^{-1}(D_q) \hookrightarrow (f\downarrow q)
  \]
  is cofinal.
  As outlined in \cref{defn:cofinal}, this requires checking non-emptiness and connectedness.
  First it is obvious that for every $x\in (f\downarrow q)$ the comma category $(x\downarrow i_q)$ is non-empty. This follows because if $x\in (f\downarrow q)$ then $f(x) \leq q$ and hence $x\leq x^{\ast}$.
  Connectedness is again immediate because the domain of $i_q$ is a one object category with a single identity morphism.
  This proves the first claim.

  The second claim is similar, but we sketch the basic insights required.
  First we note that $f^{-1}(U_q):=\{x \mid f(x) \geq q\}$.
  Since $\pL$ has meets, there's a greatest lower bound denoted by $x^{\dagger}=\bigwedge f^{-1}(U_q)$.
  Consequently whenever $f(x) \geq q$ then $x^{\dagger} \leq x$.
  The respects meets condition implies that $f(x^{\dagger}) \geq q$ so we have that $x^{\dagger}$ is in the comma category $(f\uparrow q)$.
  By dualizing the proof above one can see that $x^{\dagger}$ is final.
\end{proof}

We now note some important relationships between $f$, $f_{\flat}$ and $f_{\sharp}$ that echo the observations of \cref{lem:push-pull-unit-counit} and \cref{cor:push-pull-push-iso}.

\begin{lem}\label{lem:lattice-adjoints-full}
  If $f\colon\pL \to \pQ$ is a map of posets from a complete join semilattice, then $f_{\flat}\circ f \geq \id_{\pL}$.
  Moreover, if $f$ is full then $f_{\flat} \circ f = \id_{\pL}$.

  Dually, if $f\colon\pL \to \pQ$ is a map of posets from a complete meet semilattice, then $f_{\sharp}\circ f \leq \id_{\pL}$.
  Moreover, if $f$ is full then $f_{\sharp}\circ f = \id_{\pL}$.
\end{lem}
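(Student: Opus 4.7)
The plan is to unfold the definitions of $f_\flat$ and $f_\sharp$ from \cref{prop:lattice-adjoint-joins} and then argue pointwise, checking at an arbitrary element $p \in \pL$. The key observation is that $p$ itself is always a witness in the relevant fibre, which immediately yields the inequalities; the full condition then promotes these inequalities to equalities by forcing $p$ to be the extremum.

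First I would treat the join case. Fix $p \in \pL$ and unravel the definition to get
\[
(f_\flat \circ f)(p) = \bigvee f^{-1}(D_{f(p)}) = \sup\{x \in \pL \mid f(x) \leq f(p)\}.
\]
Since $f(p) \leq f(p)$, we have $p \in f^{-1}(D_{f(p)})$, so $p$ is bounded above by the supremum, giving $p \leq (f_\flat \circ f)(p)$. This holds for every $p$, establishing $\id_{\pL} \leq f_\flat \circ f$. For the moreover clause, recall that $f$ being full means $f(x) \leq f(p) \Rightarrow x \leq p$. Thus every element of $f^{-1}(D_{f(p)})$ is already bounded above by $p$, so $p$ itself is the supremum of this set, and $(f_\flat \circ f)(p) = p$.

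The meet case is entirely dual. For $p \in \pL$,
\[
(f_\sharp \circ f)(p) = \bigwedge f^{-1}(U_{f(p)}) = \inf\{x \in \pL \mid f(x) \geq f(p)\},
\]
and $p$ itself lies in $f^{-1}(U_{f(p)})$, so the infimum is at most $p$, giving $f_\sharp \circ f \leq \id_{\pL}$. When $f$ is full, the implication $f(x) \geq f(p) \Rightarrow x \geq p$ shows that $p$ is a lower bound, hence the infimum, of the set, yielding equality.

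There is no substantive obstacle here: the argument is a direct application of the universal property of sup/inf together with the definition of fullness. The only mild care needed is to keep the direction of inequalities straight when passing between $f_\flat$ (built from down sets and suprema) and $f_\sharp$ (built from up sets and infima), and to remember that non-emptiness of $f^{-1}(D_{f(p)})$ and $f^{-1}(U_{f(p)})$ is automatic because $p$ itself is always a member.
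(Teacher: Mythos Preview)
Your proof is correct and is essentially identical to the paper's own argument: both unfold the definition of $f_\flat(f(p))$ as $\sup\{x \mid f(x)\leq f(p)\}$, observe that $p$ lies in this set to get the inequality, and then use fullness to show $p$ is an upper bound and hence the supremum. The paper dispatches the dual case with ``follows the exact same line of reasoning,'' which you spell out explicitly.
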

\begin{proof}
  For every $x\in\pL$ we have that
  \[
    f_{\flat}(f(x)) = \sup\{x' \mid f(x') \leq f(x)\}
  \]
  Clearly $x$ is in the set on the right since $f(x)\leq f(x)$, so the supremum is greater than $x$. The fullness assumption guarantees that $f(x')\leq f(x) \Rightarrow x'\leq x$ so $x$ is an upper bound and hence a least upper bound.
  The dual statement for $f_{\sharp}$ follows the exact same line of reasoning.
\end{proof}

The following question immediately comes to mind:
If we have a map $f\colon\pL \to \pQ$ from a complete lattice and a superlinear family of translations on $\pQ$, then by post-composition we get another map $T_{\e}\circ f \colon\pL \to \pQ$.
If we apply $f_{\flat}$ or $f_{\sharp}$ backwards, does $\pL$ gain a superlinear family of translations?
It turns out that this only is true for $f_{\flat}$ and \emph{not} for $f_{\sharp}$.
Attempting to use $f_{\sharp}$ produces a sublinear family of translations, which requires a different interleaving theory.

\begin{lem}\label{lem:lower-approximate-translation}
  Let $f\colon\pL \to \pQ$ be a poset map from a complete join semilattice to a poset $\pQ$ that respects joins, see \cref{defn:respects-meets-joins}.
  If $\pQ$ is equipped with a superlinear family of translations $T_{\bullet}$, then
  \[
    T^{\flat}_{\e}:=f_{\flat} \circ T_{\e} \circ f,
  \]
  which we call the \define{lower approximation translation},
  defines a superlinear family of translations on $\pL$.
  Moreover, this translation obeys
  \[
    f(T^{\flat}_{\e}(x)) \leq T_{\e}(f(x)).
  \]
\end{lem}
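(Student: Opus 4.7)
The plan is to verify the three assertions in turn: first that each $T^{\flat}_{\e}$ is a translation on $\pL$, next the inequality $f(T^{\flat}_{\e}(x))\leq T_{\e}(f(x))$ (which is the real ``engine'' of the argument), and finally superlinearity, which will fall out as a consequence of the first two. The hardest part is conceptually light but notationally delicate: it requires us to keep track of where we are allowed to apply $f_{\flat}$ versus $f$ and to remember that $f\circ f_{\flat}\leq \id_{\pQ}$ only because $f$ respects joins (this is \cref{prop:lattice-adjoint-joins}).

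First I would check that $T^{\flat}_{\e}=f_{\flat}\circ T_{\e}\circ f$ is a translation. Monotonicity is automatic since each of $f$, $T_{\e}$, and $f_{\flat}$ is a map of posets. The identity inequality $x\leq T^{\flat}_{\e}(x)$ is slightly less trivial: expanding the definition of $f_{\flat}$, we have
\[
T^{\flat}_{\e}(x)=\bigvee\{y\in\pL\mid f(y)\leq T_{\e}(f(x))\}.
\]
Because $T_{\e}$ is a translation on $\pQ$ we have $f(x)\leq T_{\e}(f(x))$, so $x$ lies in the set whose join we are taking and therefore $x\leq T^{\flat}_{\e}(x)$.

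Next I would prove the ``moreover'' inequality $f(T^{\flat}_{\e}(x))\leq T_{\e}(f(x))$. Unwinding definitions this is just
\[
f\bigl(f_{\flat}(T_{\e}(f(x)))\bigr)\leq T_{\e}(f(x)),
\]
which is a direct application of the inequality $f\circ f_{\flat}\leq \id_{\pQ}$ from \cref{prop:lattice-adjoint-joins} (which uses that $f$ respects joins) evaluated at $q=T_{\e}(f(x))$.

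Finally, for superlinearity I would chain these facts. Given $\e_1,\e_2\geq 0$ and $x\in\pL$, apply $T_{\e_2}$ to the ``moreover'' inequality for $\e_1$ to get $T_{\e_2}(f(T^{\flat}_{\e_1}(x)))\leq T_{\e_2}(T_{\e_1}(f(x)))$, then use superlinearity of $T_{\bullet}$ on $\pQ$ to bound the right-hand side by $T_{\e_1+\e_2}(f(x))$. Applying $f_{\flat}$ (which is monotone) to both sides yields
\[
T^{\flat}_{\e_2}\bigl(T^{\flat}_{\e_1}(x)\bigr)=f_{\flat}\bigl(T_{\e_2}(f(T^{\flat}_{\e_1}(x)))\bigr)\leq f_{\flat}\bigl(T_{\e_1+\e_2}(f(x))\bigr)=T^{\flat}_{\e_1+\e_2}(x),
\]
completing the proof. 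The only subtle point in the whole argument is recognizing that the ``respects joins'' hypothesis is exactly what is needed to guarantee $f\circ f_{\flat}\leq \id_{\pQ}$, without which the key inequality $f(T^{\flat}_{\e}(x))\leq T_{\e}(f(x))$ fails and the superlinearity chain collapses.
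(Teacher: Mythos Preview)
Your proof is correct and is essentially the same as the paper's. The only cosmetic differences are that you verify $x\leq T^{\flat}_{\e}(x)$ directly from the join formula for $f_{\flat}$ (the paper instead invokes $f_{\flat}\circ f\geq\id_{\pL}$ from \cref{lem:lattice-adjoints-full}), and you isolate the ``moreover'' inequality as a named intermediate step before the superlinearity chain, whereas the paper folds the use of $f\circ f_{\flat}\leq\id_{\pQ}$ directly into the composition $f_{\flat}\circ T_{\e_2}\circ (f\circ f_{\flat})\circ T_{\e_1}\circ f$.
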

\begin{proof}
  First we note that if $T_{\e}$ is a translation then $T_{\e}(f(x))\geq f(x)$.
  Applying $f_{\flat}$ and applying \cref{lem:lattice-adjoints-full} proves
  \[
    T^{\flat}(x):=f_{\flat}\circ T_{\e} \circ f(x) \geq f_{\flat} \circ f(x) \geq x
  \]
  and hence that $T^{\flat}_{\e}$ is a translation too.
  Superlinearity needs to be checked.
  By definition
  \[
    T^{\flat}_{\e_2}\circ T^{\flat}_{\e_1} = f_{\flat}\circ T_{\e_2}\circ f \circ f_{\flat} \circ T_{\e_1} \circ f
  \]
  Now if $f$ respects joins, then \cref{prop:lattice-adjoint-joins} implies that $f \circ f_{\flat} \leq \id_{\pQ}$.
  Consequently
  \[
    T^{\flat}_{\e_2}\circ T^{\flat}_{\e_1} \leq f_{\flat} \circ T_{\e_2}\circ T_{\e_1}\circ f \leq f_{\flat} \circ T_{\e_2+\e_1} \circ f =:T^{\flat}_{\e_2+\e_1},
  \]
  which proves superlinearity of $T^{\flat}_{\bullet}$.
\end{proof}

\begin{rmk}[Upper Approximation Translation?]\label{rmk:upper-translation}
  In light of \cref{lem:lower-approximate-translation}, one can ask whether there is a dual story to be told.
  Suppose we let
  \[
    T^{\sharp}_{\e}:=f_{\sharp} \circ T_{\e} \circ f.
  \]
  If $T_{\e}$ is a translation then we get the awkward zig-zag of inequalities
  \[
    f_{\sharp}\circ T_{\e} \circ f(x) \geq f_{\sharp}\circ f(x) \leq x.
  \]
  In the event that $f$ is \emph{full} we get from \cref{lem:lattice-adjoints-full} the statement that $T^{\sharp}_{\e}$ is a translation on $\pL$:
  \[
    f \text{ is full } \Rightarrow T^{\sharp}_{\e}(x) \geq x
  \]
  However, superlinearity fails for $T^{\sharp}_{\bullet}$.
\end{rmk}

The lower approximation provides a simplified expression for shifting a module over $\pL$ when $\pQ$ is equipped with a superlinear family of translations.
The following corollary of \cref{prop:pushforward-join} and \cref{lem:lower-approximate-translation} allows us to dispense with many of the complications of \cref{defn:shift-relative-to-f} and work with an ``ordinary'' weak interleaving theory defined by $T^{\flat}_{\bullet}$.

\begin{cor}\label{prop:shift-inner}
  Fix $\pL$ a complete join semilattice, a poset $\pQ$ equipped with a superlinear family of translations $T_{\bullet}$ and let
  $f\colon\pL \rightarrow \pQ$ be a map of posets that respects joins, as defined in \cref{defn:respects-meets-joins}.
  The $\e$-shift of an $\pL$-module $M$ relative to $f$, as defined in \cref{defn:shift-relative-to-f}, can be re-expressed as pulling back along the lower approximation $T^{\flat}_{\e}$, defined in \cref{lem:lower-approximate-translation}.
  Said symbolically
  \[
    (M)^{\e}_{\pL} := f^*T^*_{\e}f_* M \cong T^{\flat *}_{\e}M.
  \]
  This isomorphism is natural in $M$, allowing us to replace the shift structure over $\pL$ with the shift structure defined by $T^{\flat}_{\bullet}$.
\end{cor}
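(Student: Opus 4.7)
The plan is to unfold the definition of $(M)^{\e}_{\pL}$ pointwise and apply \cref{prop:pushforward-join}. Since $f$ respects joins, that proposition gives a natural isomorphism $f_*M \cong f_{\flat}^*M$; equivalently, for every $q\in\pQ$ we have $(f_*M)(q) \cong M(f_{\flat}(q))$, where this isomorphism arises from the cofinality of the inclusion $\bigvee f^{-1}(D_q) \hookrightarrow (f\downarrow q)$ into the diagram defining the colimit.

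Once this is in hand, the computation is a one-line chase through definitions. For any $x\in\pL$,
\[
(M)^{\e}_{\pL}(x) := (f^*T^*_{\e}f_*M)(x) = (f_*M)(T_{\e}(f(x))) \cong M\bigl(f_{\flat}(T_{\e}(f(x)))\bigr) = M(T^{\flat}_{\e}(x)) = (T^{\flat *}_{\e}M)(x).
\]
To complete the argument I would check that these pointwise isomorphisms assemble into a morphism of $\pL$-modules, which amounts to verifying that for every $x\leq x'$ in $\pL$ the internal morphism of $f^*T^*_{\e}f_*M$ (which is the internal morphism of $f_*M$ at the comparable pair $T_{\e}(f(x))\leq T_{\e}(f(x'))$ in $\pQ$) corresponds under the above isomorphism to the internal morphism $M(T^{\flat}_{\e}(x))\to M(T^{\flat}_{\e}(x'))$. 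This is immediate from the naturality of the colimit-to-value comparison map in \cref{prop:pushforward-join}: the cofinal inclusion picks out a specific element of the indexing category and the internal map of $M$ between these picked-out elements is, by construction, the colimit comparison.

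For naturality in $M$, I would note that each ingredient in the chain above is functorial in $M$: the isomorphism $f_*M\cong f_{\flat}^*M$ is natural because both sides are computed by universal constructions in $M$ (colimit on the left, pointwise evaluation on the right, and the comparison map is induced by the universal property), while $f^*$, $T_{\e}^*$, and $T^{\flat *}_{\e}$ are plain pullback functors. Composing natural isomorphisms with functors yields a natural isomorphism of functors $\Fun(\pL,\cat)\to\Fun(\pL,\cat)$, establishing the claim.

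The only part requiring care, rather than a genuine obstacle, is bookkeeping the naturality of the cofinality isomorphism from \cref{prop:pushforward-join} across all pairs $x\leq x'$ simultaneously; there is no deep content beyond the fact that the supremum $\bigvee f^{-1}(D_{T_{\e}(f(x))})$ equals $T^{\flat}_{\e}(x)$ by definition, so the pointwise iso is literally an equality $M(T^{\flat}_{\e}(x))=M(T^{\flat}_{\e}(x))$ once one has identified the two colimit descriptions. Consequently the whole corollary reduces to a routine application of \cref{prop:pushforward-join} together with the definition of $T^{\flat}_{\e}$ from \cref{lem:lower-approximate-translation}.
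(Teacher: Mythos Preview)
Your proposal is correct and follows essentially the same route as the paper: both invoke \cref{prop:pushforward-join} to replace $f_*M$ by $f_{\flat}^*M$ and then unfold the definition $T^{\flat}_{\e}=f_{\flat}\circ T_{\e}\circ f$ to obtain the claimed isomorphism. The paper's version is simply the one-line chain $(M)^{\e}_{\pL}=\Lan_f M\circ T_{\e}\circ f\cong M\circ f_{\flat}\circ T_{\e}\circ f= M\circ T^{\flat}_{\e}=T^{\flat *}_{\e}M$, leaving the naturality bookkeeping you spell out as implicit.
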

\begin{proof}
  We apply \cref{prop:pushforward-join} to simplify the computation of the $\e$-shift of $M$ relative to $f$.
  \begin{eqnarray*}
    (M)_{\pL}^{\e} &:=& (f^*T^*_{\e}f_*M) \\
    &=& \Lan_f M \circ T_{\e} \circ f \\
    &\cong & M \circ f_{\flat} \circ T_{\e} \circ f \\
    &=:& M\circ T_{\e}^{\flat} \\
    &=:& T^{\flat *}_{\e}M
  \end{eqnarray*}
\end{proof}

\subsection{The Delta Approximation Condition}

We now isolate the condition needed to bound the distance between a $\pQ$-module and its upper and lower pixelizations.
These are some of the main results of this paper and generalize results of~\cite{Munch2016}.

\begin{defn}\label{defn:delta-approx}
  Suppose $\pQ$ is a poset equipped with a superlinear family of translations $T_{\bullet}$ and $f\colon\pP\to\pQ$ is a map of posets.
  We say that $f\colon\pP\to \pQ$ is a \define{$\delta$-approximation} if for every $q\in\pQ$ there exists a $p\in\pP$ so that $q\leq f(p) \leq T_{\delta}(q)$.
\end{defn}

The $\delta$-approximation condition allows us to bound the interleaving distance between a $\pQ$-module and its pixelizations.
Before proving this, we show how \cref{prop:pushforward-join} implies that both pixelizations can be thought of as pullbacks along the two possible Galois connections.

\begin{cor}\label{cor:upper-lower-pixelization}
  If $f\colon\pL\to\pQ$ respects joins, then for any $\pQ$-module $M$ the lower pixelization of $M$ can be re-expressed as
  \[
    f_*f^* M \cong f^*_{\flat}f^*M = (f\circ f_{\flat})^* M = M\circ f\circ f_{\flat}.
  \]
  Dually if $f\colon\pL\to\pQ$ respects meets, then for any $\pQ$-module $M$ the upper pixelization of $M$ can be re-expressed as
  \[
    f_{\dagger}f^* M \cong f^*_{\sharp}f^*M = (f\circ f_{\sharp})^* M = M\circ f\circ f_{\sharp}.
  \]
\end{cor}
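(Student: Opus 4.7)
The plan is to reduce both statements to Proposition \ref{prop:pushforward-join} applied to the pulled-back module $f^*M$, and then to use the contravariant functoriality of pullback to re-bundle the composition. The corollary really is a direct consequence of \ref{prop:pushforward-join}: the conceptual content was proved there, and here we are just rewriting pushforwards of pullbacks in a form that displays them as pullbacks along the round-trip maps $f\circ f_\flat$ and $f\circ f_\sharp$.

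For the lower pixelization, I would first note that $f^*M$ is an $\pL$-module, so if $f\colon\pL\to\pQ$ respects joins then Proposition \ref{prop:pushforward-join} applies to $f^*M$ and yields, at each $q\in\pQ$,
\[
  f_*(f^*M)(q) \;\cong\; (f^*M)\!\left(\bigvee f^{-1}(D_q)\right) \;=\; (f^*M)(f_\flat(q)) \;=\; f_\flat^*(f^*M)(q).
\]
Then I would appeal to the fact that pullback is contravariantly functorial under composition of poset maps, so $f_\flat^* \circ f^* = (f\circ f_\flat)^*$, and the final expression $M\circ f\circ f_\flat$ is just unpacking the pullback along $f\circ f_\flat\colon\pQ\to\pQ$ as precomposition on functors.

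The upper case is a straight dualization. I would apply the second half of Proposition \ref{prop:pushforward-join} to the $\pL$-module $f^*M$ under the respects-meets hypothesis to obtain
\[
  f_\dagger(f^*M)(q) \;\cong\; (f^*M)\!\left(\bigwedge f^{-1}(U_q)\right) \;=\; (f^*M)(f_\sharp(q)) \;=\; f_\sharp^*(f^*M)(q),
\]
and then re-bundle via $f_\sharp^*\circ f^* = (f\circ f_\sharp)^*$ to land on $M\circ f\circ f_\sharp$.

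There is essentially no obstacle in this corollary; the substantive work (the cofinality argument that collapses the colimit over the comma category $(f\downarrow q)$ to evaluation at $\bigvee f^{-1}(D_q)$, and dually the final argument for $\bigwedge f^{-1}(U_q)$) has already been discharged in Proposition \ref{prop:pushforward-join}. The only minor bookkeeping item I would flag is to confirm naturality in $M$, so that the pointwise isomorphisms assemble into an isomorphism of $\pQ$-modules rather than a family of isolated vector-space isomorphisms; this follows immediately from the naturality implicit in Proposition \ref{prop:pushforward-join} and from the evident naturality of the contravariant functoriality of pullback.
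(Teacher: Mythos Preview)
Your proposal is correct and matches the paper's intent: the corollary is stated without proof immediately after Proposition~\ref{prop:pushforward-join}, and your argument---applying that proposition to the $\pL$-module $f^*M$ and then using contravariant functoriality of pullback---is precisely the derivation the paper leaves implicit.
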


We now state our main bounds between a $\pQ$-module and its pixelizations.

\begin{thm}\label{thm:delta-pixelization}
  Suppose $\pL$ is a complete lattice and $\pQ$ is a poset with a superlinear family of translations.
  If $f\colon\pL \to \pQ$ is a $\delta$-approximation that respects joins, then the interleaving distance between any $\pQ$-module $M$ and its lower pixelization is bounded above by $\delta$, i.e.
  \[
    \forall M\in \Fun(\pQ,\cat) \qquad d_{\pQ}(M,f_*f^*M) = d_{\pQ}(M,(f\circ f_{\flat})^*M) \leq \delta.
  \]
  Dually, if $f\colon\pL\to \pQ$ is a $\delta$-approximation that respects meets, then
  \[
    \forall M\in \Fun(\pQ,\cat) \qquad d_{\pQ}(M,f_{\dagger}f^*M) = d_{\pQ}(M,(f\circ f_{\sharp})^*M) \leq \delta.
  \]
\end{thm}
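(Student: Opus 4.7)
The plan is to build an explicit weak $\delta$-interleaving between $M$ and its lower pixelization $N := f_*f^*M$. By \cref{cor:upper-lower-pixelization}, we may rewrite $N(q) = M(f(f_\flat(q)))$, and every morphism appearing in the interleaving will be an internal structure map of $M$ between comparable elements of $\pQ$. Because $\pQ$ is a thin category (at most one morphism between any two objects), functoriality of $M$ will force every required diagram to commute once the arrows are defined correctly.

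The interleaving rests on two order-theoretic facts. First, the respects-joins hypothesis together with \cref{prop:lattice-adjoint-joins} gives $f\circ f_\flat \leq \id_\pQ$, so $f(f_\flat(q)) \leq q \leq T_\delta(q)$ for every $q$. I would use this to define $\psi\colon N \to M^\delta$ componentwise as the structure map $\psi_q := M(f(f_\flat(q))\leq T_\delta(q))$. Second, the $\delta$-approximation hypothesis supplies, for each $q$, an element $p\in\pL$ with $q\leq f(p)\leq T_\delta(q)$. Since $f(p)\leq T_\delta(q)$, the characterization of $f_\flat$ as a supremum gives $p\leq f_\flat(T_\delta(q))$, and monotonicity of $f$ then yields $q \leq f(p) \leq f(f_\flat(T_\delta(q)))$. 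This lets me define $\varphi\colon M\to N^\delta$ componentwise as $\varphi_q := M(q\leq f(f_\flat(T_\delta(q))))$. Naturality of both $\varphi$ and $\psi$ is immediate because $f$, $f_\flat$, and $T_\delta$ are all order-preserving, so the chosen structure maps assemble into natural transformations.

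It remains to verify the two pentagon identities of \cref{defn:weak-interleaving}. Each arrow in these pentagons, including the comparison $\Sigma^{\delta,\delta}$ induced by superlinearity $T_\delta\circ T_\delta\leq T_{2\delta}$ and the arrows $\eta^0$, $\eta^{0,2\delta}$, is of the form $M(q_1\leq q_2)$ for comparable $q_1\leq q_2$ in $\pQ$. Hence any composite chain of such arrows from $M(q)$ to $M(q')$ is equal to the unique morphism $M(q\leq q')$ by functoriality of $M$, since $\pQ$ is thin. So both pentagons commute automatically and we conclude $d_\pQ(M,N)\leq\delta$.

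The dual statement about the upper pixelization is handled by the symmetric argument: replace $f_\flat$ with $f_\sharp$ and the inequality $f\circ f_\flat \leq \id_\pQ$ with $f\circ f_\sharp \geq \id_\pQ$, so that the map $\varphi\colon M\to N$ is now direct, induced by $q\leq f(f_\sharp(q))$. Then apply $\delta$-approximation to produce the bound $f(f_\sharp(q))\leq f(p) \leq T_\delta(q)$ needed to define $\psi\colon N\to M^\delta$. The only point requiring real care is the bookkeeping of inequalities and Galois-connection identities in the dual case; no genuinely hard step arises, because the thinness of $\pQ$ trivializes all coherence conditions and converts the proof into a sequence of order-theoretic verifications.
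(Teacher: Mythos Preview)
Your proposal is correct and follows essentially the same approach as the paper's proof: both construct the interleaving maps from the chain of inequalities $f\circ f_\flat \leq \id_\pQ$ and $q \leq f(f_\flat(T_\delta(q)))$ (the latter derived from the $\delta$-approximation condition exactly as you do), and the dual case is handled symmetrically via $\id_\pQ \leq f\circ f_\sharp \leq T_\delta$. In fact you go slightly further than the paper, which simply writes ``It is left to the reader to check that these participate in an interleaving''; your observation that thinness of $\pQ$ forces all the pentagon diagrams to commute once the arrows are defined is the clean way to discharge that verification.
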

\begin{proof}
  We know from \cref{prop:lattice-adjoint-joins} that if $f$ respects joins then $f\circ f_{\flat} \leq \id_{\pQ}$.
  This implies there are natural morphisms
  \[
    (f\circ f_{\flat})^*M = M\circ f\circ f_{\flat} \to M \to (M)^{\delta}_{\pQ}.
  \]
  The above composition participates in a $\delta$-interleaving as we now show.
  If we consider $(M\circ f\circ f_{\flat})^{\delta}_{\pQ}$ this unravels to $M\circ f\circ f_{\flat}\circ T_{\delta}$.
  Now we note that the $\delta$-approximation condition implies that
  \[
    \forall q\in \pQ \qquad \exists x\in\pL \qquad \text{s.t} \qquad q\leq f(x) \leq T_{\delta}(q).
  \]
  Such an $x$ above is necessarily less than or equal to $f_{\flat}(T_{\delta}(q))$.
  Since $f$ is a poset map that preserves joins we have that
  \[
    \forall q\in \pQ \qquad q\leq f(f_{\flat}(T_{\delta}(q))) \leq T_{\delta}(q).
  \]
  This induces a sequence of natural morphisms
  \[
    M \to (f^*_{\flat}f^*M)^{\delta}_{\pQ} \to (M)^{\delta}_{\pQ}.
  \]
  It is left to the reader to check that these participate in an interleaving, thereby proving that
  \[
    d_{\pQ}(M,f_*f^*M) = d_{\pQ}(M,f^*_{\flat}f^*M)\leq \delta.
  \]

  For the upper pixelization, the proof is quite similar, but we go through the necessary invocations.
  We know from \cref{prop:lattice-adjoint-joins} that if $f$ respects meets then $f\circ f_{\sharp} \geq \id_{\pQ}$.
  This implies that there is a natural morphism from $M$ to its pixelization
  \[
  M \to M \circ f\circ f_{\sharp} = f^*_{\sharp}f^* M \to (f^*_{\sharp}f^*M)^{\delta}_{\pQ}.
  \]
  To construct the other morphism that participates in a $\delta$-interleaving we note that the $\delta$-approximation condition implies
  \[
    \id_{\pQ} \leq f\circ f_{\sharp} \leq T_{\delta}.
  \]
  Post-composing with $M$ provides the second necessary morphism from the upper pixelization to the $\delta$-shift of $M$.
  \[
    (f\circ f_{\sharp})^*M = M\circ f\circ f_{\sharp} \to M\circ T_{\delta}=:(M)^{\delta}_{\pQ}
  \]
  It is left to the reader to check that this defines an interleaving and hence
  \[
    d_{\pQ}(M,f_{\dagger}f^*M) = d_{\pQ}(M,f^*_{\sharp}f^*M) \leq \delta.
  \]
\end{proof}

\cref{thm:delta-pixelization} proves that we can infer the true interleaving distance between two $\pQ$-modules using either pixelization.

\begin{cor}\label{cor:pixelization-distortion}
  Suppose $\pL$ is a complete lattice and $\pQ$ is a poset with a superlinear family of translations.
  If $f\colon\pL \to \pQ$ is a $\delta$-approximation that respects joins, then the interleaving distance between two $\pQ$-modules $M$ and $N$ can be inferred using their lower pixelizations.
  Specifically
  \[
    |d_{\pQ}(M,N) - d_{\pQ}(f_*f^*M,f_*f^*N)|\leq 2\delta
  \]
  Dually, if $f\colon\pL \to \pQ$ is a $\delta$-approximation that respects meets, then the interleaving distance between two $\pQ$-modules $M$ and $N$ can be inferred using their upper pixelizations.
  \[
    |d_{\pQ}(M,N) - d_{\pQ}(f_{\dagger}f^*M,f_{\dagger}f^*N)|\leq 2\delta
  \]
\end{cor}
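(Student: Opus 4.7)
The proof is a direct consequence of the triangle inequality for the weak interleaving pseudometric together with the bound on pixelization provided by \cref{thm:delta-pixelization}. The plan is to handle both the join-respecting (lower pixelization) and the meet-respecting (upper pixelization) cases by the same argument, since the only input that differs is which half of \cref{thm:delta-pixelization} we invoke. I will write out the lower case in detail; the upper case is identical after swapping $f_*f^*$ with $f_\dagger f^*$.

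First, I would recall that the weak interleaving distance $d_{\pQ}$ satisfies the triangle inequality: concatenating a weak $\e$-interleaving with a weak $\e'$-interleaving produces a weak $(\e+\e')$-interleaving, via the natural transformations $\Sigma^{\e,\e'}_{\pQ}$ provided by superlinearity in \cref{defn:shift-over-Q}. This is standard and is used tacitly in \cref{lem:interleaving-distortion}. Applying it twice to insert the modules $f_*f^*M$ and $f_*f^*N$ between $M$ and $N$ gives
\[
d_{\pQ}(M,N) \;\leq\; d_{\pQ}(M,f_*f^*M) \;+\; d_{\pQ}(f_*f^*M,f_*f^*N) \;+\; d_{\pQ}(f_*f^*N,N),
\]
and symmetrically, by inserting $M$ and $N$ between $f_*f^*M$ and $f_*f^*N$,
\[
d_{\pQ}(f_*f^*M,f_*f^*N) \;\leq\; d_{\pQ}(f_*f^*M,M) \;+\; d_{\pQ}(M,N) \;+\; d_{\pQ}(N,f_*f^*N).
\]

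Second, I would invoke the lower half of \cref{thm:delta-pixelization}: since $f\colon\pL\to\pQ$ is a $\delta$-approximation that respects joins, the hypotheses of that theorem apply and yield $d_{\pQ}(M,f_*f^*M)\leq\delta$ and $d_{\pQ}(N,f_*f^*N)\leq\delta$ for the two given modules. Substituting these bounds into the two displayed inequalities and rearranging gives
\[
d_{\pQ}(M,N) - d_{\pQ}(f_*f^*M,f_*f^*N) \;\leq\; 2\delta
\]
and
\[
d_{\pQ}(f_*f^*M,f_*f^*N) - d_{\pQ}(M,N) \;\leq\; 2\delta,
\]
which together yield $|d_{\pQ}(M,N) - d_{\pQ}(f_*f^*M,f_*f^*N)|\leq 2\delta$, as desired. (Alternatively, one can absorb both steps into \cref{lem:interleaving-distortion}: combining it with the identity $d_{\pQ}(f_*f^*M,f_*f^*N)=d_{\pP}(f^*M,f^*N)$ from \cref{thm:extend-restrict-interleavings} applied to $f^*M$ and $f^*N$ immediately gives the bound, using $d_{\pQ}(M,f_*f^*M),\,d_{\pQ}(N,f_*f^*N)\leq\delta$.)

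For the upper pixelization case, the same argument works verbatim with $f_*f^*$ replaced by $f_\dagger f^*$, invoking the meet-respecting half of \cref{thm:delta-pixelization} in place of the join-respecting half. There is no real obstacle in this proof: the entire content is packaged into \cref{thm:delta-pixelization}, and the corollary is essentially just a bookkeeping exercise with the triangle inequality. The only point to be careful about is to note that the weak interleaving distance genuinely satisfies the triangle inequality—this is where the superlinearity of $T_\bullet$ from \cref{defn:family-of-translations} is silently used.
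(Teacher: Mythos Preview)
Your proof is correct and follows exactly the paper's approach: the paper's proof is a single line stating that both statements follow from the triangle inequality and \cref{thm:delta-pixelization}, which is precisely what you have spelled out in detail.
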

\begin{proof}
  Both statements follow from the triangle inequality and \cref{thm:delta-pixelization}.
\end{proof}

The lower pixelization has the advantage that it plays well with the relative interleaving distance on $\pL$.
This allows us to state how much distortion the pullback functor has.

\begin{thm}\label{thm:delta-distortion}
  If $\pL$ is a complete lattice, $\pQ$ is equipped with a superlinear family of translations $T_{\bullet}$ and $f\colon\pL \to \pQ$ is a $\delta$-approximation that respects joins, then the distortion of the pullback functor
  \[
    f^*\colon \Fun(\pQ,\cat) \to \Fun(\pL,\cat) \qquad M \squigrightarrow f^*M
  \]
  is at most $2\delta$.
  In other words for any pair of $\pQ$-modules $M$ and $N$ the difference in interleaving distance is bounded by
  \[
    |d_{\pQ}(M,N)-d_{\pL}(f^*M,f^*N)|\leq 2\delta.
  \]
  Note here that the interleaving distance over $\pL$ is the weak relative interleaving distance of \cref{defn:relative-weak-interleaving}, which by virtue of \cref{prop:shift-inner} can be identified with a weak interleaving distance over $\pL$ that is defined using the lower approximation of $T_{\bullet}$, i.e. $T^{\flat}_{\bullet}$.
\end{thm}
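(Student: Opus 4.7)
The proof is essentially an immediate corollary of the two previously established tools, so the main work is recognizing which results to combine. The plan is to invoke \cref{lem:interleaving-distortion} with the poset map $f\colon \pL \to \pQ$ playing the role of the generic map, and then to bound the two pixelization error terms on the right-hand side of that lemma by $\delta$ each using \cref{thm:delta-pixelization}.

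More concretely, first I would observe that since $\pL$ is in particular a poset, \cref{lem:interleaving-distortion} applies verbatim with $\pP = \pL$, yielding
\[
|d_{\pQ}(M,N) - d_{\pL}(f^*M, f^*N)| \leq d_{\pQ}(M, f_* f^* M) + d_{\pQ}(N, f_* f^* N).
\]
Here the distance $d_{\pL}$ on the left is the weak relative interleaving distance of \cref{defn:relative-weak-interleaving}, which is the distance the theorem is really about. Next, I would note that the hypotheses of \cref{thm:delta-pixelization} (complete lattice, superlinear family on $\pQ$, $f$ a $\delta$-approximation respecting joins) are exactly those of the theorem at hand, so that theorem gives
\[
d_{\pQ}(M, f_* f^* M) \leq \delta \qquad \text{and} \qquad d_{\pQ}(N, f_* f^* N) \leq \delta
\]
for any pair $M, N$ of $\pQ$-modules. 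Substituting these two bounds into the previous inequality yields the desired distortion bound $|d_{\pQ}(M,N) - d_{\pL}(f^*M, f^*N)| \leq 2\delta$.

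Finally, for the parenthetical claim at the end of the statement, I would appeal to \cref{prop:shift-inner} (labeled \texttt{prop:shift-inner} in the paper), which tells us that the shift $(M)_{\pL}^{\e} := f^* T_{\e}^* f_* M$ defining the relative interleaving distance is naturally isomorphic to $T^{\flat *}_{\e} M$, where $T^{\flat}_{\bullet}$ is the lower approximation translation constructed in \cref{lem:lower-approximate-translation}. Consequently the relative weak interleaving distance $d_{\pL}$ coincides with the ordinary weak interleaving distance on $\Fun(\pL,\cat)$ induced by the superlinear family $T^{\flat}_{\bullet}$, so the bound just derived can be rephrased in that setting as well. There is no real obstacle to the proof beyond assembling the pieces correctly; the substantive work has already been done in \cref{lem:interleaving-distortion} and \cref{thm:delta-pixelization}.
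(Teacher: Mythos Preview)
Your proposal is correct and matches the paper's intended argument: the paper actually states \cref{thm:delta-distortion} without an explicit proof block, treating it as an immediate consequence of \cref{lem:interleaving-distortion} and \cref{thm:delta-pixelization}, which is precisely the combination you spell out. Your handling of the parenthetical via \cref{prop:shift-inner} is also exactly what the paper signals in the theorem statement itself.
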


\section{Applications to Mapper and Cosheaves}
\label{sec:mapper}

Much of our efforts have been inspired by the fundamental convergence result of~\cite{Munch2016}.
This result shows how one can use the algorithm of \texttt{Mapper}~\cite{Singh2007} to reliably approximate the Reeb cosheaf associated to a map $g\colon \topY \to \topX$ where $\topX$ is a metric space equipped with a cover $\calU$.
The way that \texttt{Mapper} works is that it defines a cellular cosheaf over the nerve of $\calU$ that associates to each simplex $\sigma\in N(\calU)$ the set of path components $\pi_0(g^{-1}(U_{\sigma}))$.
The Reeb cosheaf, by contrast, associates to every open set $U\in\Open(\topX)$ the set of path components $\pi_0(g^{-1}(U))$ of the pre-image.
The content of~\cite{Munch2016} is that when the cover $\calU$ is by open sets with diameter at most $\delta$, then one can convert the cellular cosheaf over the nerve into a cosheaf on $\topX$ and this cosheaf has interleaving distance at most $\delta$ with the Reeb cosheaf.
In this sense,~\cite{Munch2016} proves the correctness and stability of the~\texttt{Mapper} algorithm.

We begin by showing how to complete a cover to a lattice so that the results of \cref{sec:lattice-approximation} can be used to approximate the Reeb cosheaf, or any cosheaf, for that matter.
We apply the relative interleaving theory developed in \cref{sec:relative-interleavings} to a general approximating cover and prove our main interleaving inference result for cosheaves in \cref{thm:cosheaf-interleaving-approximation}.
This result has a similar flavor to the one found in~\cite{Munch2016}, although the lower-pixelization of the Reeb cosheaf differs from the construction used there.
Additionally our result focuses on how one can define interleavings intrinsicly over the meet completion of a cover.
This general discussion is finally specialized to the study of a cover that is derived from a cellular structure on $\R^n$ where we compute an explicit weighting on the face relation poset that gives rise to the lower approximation translation of \cref{lem:lower-approximate-translation}.
In this setting one can see how the computation of interleavings can be inferred using finitely many computations.

\subsection{Covers, Lattices and Cosheaves}

Any topological space $\topX$ naturally has a complete lattice associated to it, namely the poset of open sets $\Open(\topX)$.
The join of a collection of open sets is obviously the union of those open sets.
The join of the empty collection is the empty set.
By contrast, the meet of a collection of empty sets is the interior of the intersection.
The meet of the empty collection is the set $\topX$.

Suppose now that $\cU$ is a cover of $\topX$.
In order to apply the theory of \cref{sec:lattice-approximation} we need to associate to our cover $\calU$ three algebraic devices: a complete meet semilattice, a complete join semilattice and a complete lattice.

\begin{defn}[Semilattices Associated to a Cover]\label{defn:cover-semilattices}
  Suppose $\calU=\{U_k\}_{k\in\Lambda}$ is a cover of a topological space $\topX$.
  We define the \define{meet completion} of $\cU$, written $\cM(\cU)$, to have elements given by
  \[
    \bigwedge_{k\in \sigma} U_k = \text{int}\left(\bigcap_{k\in \sigma} U_k\right) =: \text{int}\left(U_{\sigma}\right)
  \]
  Note that the meet completion automatically includes as a full and faithful subposet of $\Open(\topX)$.

  The \define{join completion} of a cover $\cU$, written $\cJ(\cU)$, has elements given by
  \[
    \bigvee_{k\in \sigma} U_k=\bigcup_{k\in \sigma} U_k.
  \]
  for any $\sigma\subseteq \Lambda$.

  Finally, we define the \define{lattice completion} of $\calU$, written $\pL(\cU)$ to be the join completion of $\cM(\cU)$.
  This includes all unions of intersections of elements of $\cU$, i.e. every element of $\pL(\cU)$ is of the form
  \[
    \bigvee_{\sigma \in N(\cU)} U_{\sigma}.
  \]
  The lattice completion $\pL_{\cU}$ clearly includes as a full and faithful subposet of $\Open(\topX)$.
\end{defn}

We note that the meet completion, the join completion and lattice completion of a cover all participate in the following diagram of full and faithful inclusions of posets.
\[
  \begin{tikzcd}
    & \cJ(\cU) \ar[dr,"m"'] \ar[drr,"u"] & & \\
    \cU \ar[ur] \ar[dr] & & \pL(\cU) \ar[r,"f"] & \Open(\topX) \\
    & \cM(\cU) \ar[ur,"j"] \ar[urr,"i"'] & &
  \end{tikzcd}
\]
We will investigate each of the named arrows above in turn, but will start with the maps $f_{\flat}$ and $f_{\sharp}$ associated to $f$ as defined in \cref{prop:lattice-adjoint-joins} in this setting.

\begin{prop}\label{prop:open-flat-sharp}
  If $f\colon\pL(\cU) \to \Open(X)$ is the lattice completion of a cover $\cU$, then
  \[
    f_{\flat}(V) = \sup\{ \cup_{\sigma} U_{\sigma} \in \pL_{\cU} \mid \cup_{\sigma} U_{\sigma} \subseteq V\}.
  \]
  Notice that for typically small open sets $V$, $f_{\flat}(V)=\varnothing$.
  Since $\Open(X)$ is a complete lattice, we note that $f_{\flat}$ preserves meets.

  By contrast, $f_{\sharp}$ is non-trivial for every non-empty open set $V\in\Open(X)$.
  \[
    f_{\sharp}(V) = \inf\{\cup_{\sigma} U_{\sigma} \in \pL(\cU) \mid V \subseteq \cup_{\sigma} U_{\sigma}\}
  \]
  Finally, we note that $f_{\sharp}$ preserves joins, i.e. unions are sent to unions.
\end{prop}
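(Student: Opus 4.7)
The plan is to handle each claim in turn, with the two formulas obtained by direct substitution into the definitions from \cref{prop:lattice-adjoint-joins}, and the two preservation statements following from the standard Galois-connection principle. The key point to exploit throughout is that $f$ is a full and faithful inclusion, so $f(W)\leq V$ in $\Open(\topX)$ reduces to the literal containment $W\subseteq V$.

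First I would derive the formula for $f_\flat(V)$. By \cref{prop:lattice-adjoint-joins}, $f_\flat(V) = \sup f^{-1}(D_V)$, and since $f$ is an inclusion, $f^{-1}(D_V) = \{W\in\pL(\cU)\mid W\subseteq V\}$. Writing each such $W$ in the canonical form $\cup_\sigma U_\sigma$ yields the displayed expression. The side remark that $f_\flat(V)=\varnothing$ for small $V$ is then immediate: when no non-empty element of $\pL(\cU)$ fits inside $V$, the supremum reduces to the empty join, which is $\varnothing$. The formula for $f_\sharp$ is derived dually from $f_\sharp(V) = \inf f^{-1}(U_V)$.

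For the preservation claims, the cleanest route is to observe that \cref{prop:lattice-adjoint-joins} implicitly establishes the Galois connections $f \dashv f_\flat$ and $f_\sharp \dashv f$. Unpacking the first, $W \leq f_\flat(V)$ iff $f(W)\leq V$: one direction is the definition of $f_\flat$ as a supremum, while the other uses that $f$ respects joins, so that $f(W)\leq f(f_\flat(V)) \leq V$ by the inequality $f\circ f_\flat \leq \id_{\pQ}$. The adjunction $f_\sharp\dashv f$ is checked dually from $f\circ f_\sharp\geq \id_{\pQ}$. Since right adjoints between posets preserve limits and left adjoints preserve colimits, we immediately conclude that $f_\flat$ preserves the meets of $\Open(\topX)$ (which exist because it is a complete lattice) and that $f_\sharp$ preserves joins.

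I do not anticipate a substantive obstacle; the proof is essentially bookkeeping combined with invoking the Galois-connection principle. The one point worth flagging is that meets computed intrinsically in $\pL(\cU)$ need not agree with set-theoretic intersections in $\Open(\topX)$, but this is invisible to the adjoint argument since all statements are phrased in terms of the intrinsic order. If an explicit verification is preferred, join-preservation for $f_\sharp$ can be checked by hand: $\bigcup_\alpha f_\sharp(V_\alpha)$ belongs to $\pL(\cU)$ (being a union of elements of $\pL(\cU)$, which is closed under arbitrary joins) and is an upper bound for $\bigcup_\alpha V_\alpha$, giving one inequality, while monotonicity of $f_\sharp$ supplies the reverse. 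Meet-preservation for $f_\flat$ is handled symmetrically.
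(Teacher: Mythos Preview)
Your proposal is correct and follows essentially the same route as the paper: the displayed formulas are read off directly from \cref{prop:lattice-adjoint-joins}, and the preservation statements are obtained from the Galois connections $f\dashv f_\flat$ and $f_\sharp\dashv f$ together with the principle that right (resp.\ left) adjoints preserve meets (resp.\ joins). The only cosmetic difference is that the paper invokes the Adjoint Functor Theorem to assert the adjunctions, whereas you verify them explicitly from the defining inequalities $f\circ f_\flat\leq\id$ and $f\circ f_\sharp\geq\id$; your version is slightly more self-contained but otherwise identical in substance.
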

\begin{proof}
  Most of the above statements are just from the definitions of \cref{prop:lattice-adjoint-joins}.
  The fact that $f_{\flat}$ commutes with meets and that $f_{\sharp}$ commutes with joins are consequences of the Adjoint Functor Theorem.
  Since $f$ preserves joins (colimits), it is a left adjoint and hence its right adjoint $f_{\flat}$ preserves meets (limits).
  Additionally since $f$ preserves meets (limits), it is also a right adjoint and hence its left adjoint $f_{\sharp}$ preserves joins (colimits). See Proposition 1.104 of~\cite{fong2019invitation} for a reference.
\end{proof}

Before we consider the other poset maps indicated above, we proceed to directly apply our relative interleaving theory and the approximation results obtained above.

\subsection{Interleaving Isometry and Approximation Results}

We are now in a position to apply all of the theory developed in \cref{sec:relative-interleavings} and \cref{sec:approximation}, but we first provide a natural example of a superlinear family of translations on $\Open(\topX)$.

\begin{defn}[Metric Translation]\label{defn:metric-translation}
  Suppose $\topX$ is a metric space with metric $d$.
  The \define{metric translation} $T\colon\Open(\topX)\times [0,\infty) \to \Open(\topX)$ is defined by
  \[
    T_{\e}(U)=U^{\e} \qquad \text{where} \qquad U \squigrightarrow U^{\e}=\cup_{x\in U} B(x,\e).
  \]
  Here $B(x,\e)$ is the open ball of radius $\e$ about $x$.
\end{defn}

First we notice that if $\Open(\topX)$ is equipped with a superlinear family of translations $T_{\bullet}$, then by \cref{prop:shift-inner} the relative weak interleaving distance $d_f$ defined on $\pL(\calU)$
can be identified with an ordinary weak interleaving distance defined using the lower approximation to $T$, which we called $T^{\flat}_{\bullet}$ i.e.~$d_{\pL(\cU)}$.
In other words
\[
  d_{f}(-,-)=d_{\pL(\cU)}^{T^{\flat}}(-,-).
\]
We can use the superlinear family of lower approximations $T^{\flat}_{\bullet}$ to then define a relative weak interleaving distance on $\cM(\calU)$, written $d_{j\colon\cM(\calU) \to\widebar{\calU}}$ or $d_{\cM(\calU)}$ for short.
Our main corollary of \cref{thm:extend-restrict-interleavings} can then be phrased as follows.

\begin{cor}\label{cor:cover-completion-isometry}
  Suppose $\Open(\topX)$ is equipped with a superlinear family of translations $T_{\bullet}$.
  If $\calU$ is a cover of $\topX$ and $\pL(\cU)$ is the lattice completion of $\calU$, then let $T^{\flat}_{\bullet}$ be the lower approximation of $T_{\bullet}$ by $f\colon\pL(\cU)\hookrightarrow \Open(\topX)$.
  Denoting the inclusion of the meet completion of $\cU$ into  by $i\colon\cM(\calU) \hookrightarrow \Open(\topX)$ and $j\colon\cM(\calU) \hookrightarrow \pL(\cU)$, we have that if $M$ and $N$ are $\Open(\topX)$-modules, then
  \[
    d_{\cM}(i^*M,i^*N) = d_{\pL}(j_*i^*M,j_*i^*N).
  \]
  In other words, we can define an interleaving theory over the meet completion $\cM(\cU)$ and an interleaving theory over the lattice $\pL(\cU)$ and the pushforward functor along $j$ defines an isometry between these two categories.
\end{cor}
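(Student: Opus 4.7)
The plan is to recognize this corollary as an immediate consequence of the main isometry theorem, \cref{thm:extend-restrict-interleavings}, applied in a specific setting. First I would unpack the setup: we have a full and faithful inclusion of posets $j\colon \cM(\cU) \hookrightarrow \pL(\cU)$, and the codomain $\pL(\cU)$ is equipped with the superlinear family of translations $T^{\flat}_{\bullet}$ provided by \cref{lem:lower-approximate-translation}. Under this setup, \cref{defn:relative-weak-interleaving} gives us a weak relative interleaving distance on $\Fun(\cM(\cU),\cat)$ which is precisely the $d_{\cM}$ of the corollary statement, while the weak interleaving distance on $\Fun(\pL(\cU),\cat)$ using $T^{\flat}_{\bullet}$ is exactly $d_{\pL}$.

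With this identification in place, the proof is essentially a one-line invocation. I would apply \cref{thm:extend-restrict-interleavings} to the map $j\colon \cM(\cU)\to\pL(\cU)$ (with $\pL(\cU)$ playing the role of the ambient poset $\pQ$ carrying the superlinear family) and to the specific pair of $\cM(\cU)$-modules $i^*M$ and $i^*N$. The theorem then yields
\[
d_{\cM}(i^*M,i^*N) \;=\; d_{\pL}(j_*i^*M, j_*i^*N),
\]
which is exactly the desired identity. Since \cref{thm:extend-restrict-interleavings} itself follows from the conjunction of \cref{lem:Kan-extend-interleavings} (left Kan extensions preserve weak interleavings) and \cref{lem:full-restricts-interleavings} (weak interleavings restrict along any poset map), both directions of the equality are automatic.

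There is no real obstacle here; the only subtlety worth pointing out in the proof is that $d_{\cM}$ must be interpreted as the \emph{relative} weak interleaving distance with respect to $j$, not a distance defined directly on $\cM(\cU)$ without reference to a superlinear family (indeed, $\cM(\cU)$ typically carries no natural superlinear family of its own). Once that interpretation is fixed, the corollary is a textbook instance of the isometry theorem, and no further computation is required.
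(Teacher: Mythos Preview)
Your proposal is correct and matches the paper's approach exactly: the corollary is stated in the paper without a separate proof, precisely because it is a direct instantiation of \cref{thm:extend-restrict-interleavings} applied to the map $j\colon \cM(\cU)\hookrightarrow\pL(\cU)$ with $\pL(\cU)$ carrying the superlinear family $T^{\flat}_{\bullet}$ from \cref{lem:lower-approximate-translation}. Your clarifying remark that $d_{\cM}$ must be read as the relative weak interleaving distance with respect to $j$ is exactly the interpretation the paper spells out in the paragraph preceding the corollary.
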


We would like to say that the interleaving distances above can be used to infer something about the interleaving distance of $M$ and $N$ over $\Open(\topX)$.
However this requires the cosheaf axiom~\cref{defn:cosheaf} as an additional assumption.

\begin{prop}\label{prop:cosheaf-cover-identity}
  If $M$ is a cosheaf on a topological space $\topX$, then using the notation of \cref{cor:cover-completion-isometry} we have
  \[
    f^*M\cong j_*i^*M
  \]
\end{prop}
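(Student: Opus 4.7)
The plan is to verify the isomorphism $f^*M\cong j_*i^*M$ pointwise on $V\in\pL(\cU)$ and note naturality at the end.

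First I would compute both sides. Since $f\colon\pL(\cU)\hookrightarrow\Open(\topX)$ is a (full and faithful) inclusion, $(f^*M)(V) = M(V)$. Since $j\colon\cM(\cU)\hookrightarrow\pL(\cU)$ is also a full and faithful inclusion and $i^*M$ is the restriction of $M$ to $\cM(\cU)$, the pointwise formula of \cref{defn:pushforward} gives
\[
(j_*i^*M)(V) \;=\; \varinjlim_{W\in\cM(\cU),\, W\subseteq V} M(W).
\]
The structure maps $M(W)\to M(V)$ for $W\subseteq V$ assemble into a cocone, producing a canonical comparison morphism $(j_*i^*M)(V)\to M(V)$. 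The content of the proof is to show this map is an isomorphism by invoking the cosheaf axiom on $M$.

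By definition of $\pL(\cU)$ as the join completion of $\cM(\cU)$, any $V\in\pL(\cU)$ admits an expression $V = \bigcup_{\alpha}W_\alpha$ with $W_\alpha\in\cM(\cU)$, so $\{W_\alpha\}$ is an open cover of $V$ by elements of $\cM(\cU)_{\leq V}$. Moreover, $\cM(\cU)_{\leq V}$ is closed under finite intersections inside $\Open(\topX)$: by \cref{defn:cover-semilattices} the meet in $\cM(\cU)$ of a finite family of open sets is just their set-theoretic intersection, and intersections of subsets of $V$ remain subsets of $V$. The cosheaf property applied to the cover $\{W_\alpha\}$ identifies $M(V)$ with the colimit of $M$ over the associated \v{C}ech diagram, whose objects $W_{\alpha_1}\cap\cdots\cap W_{\alpha_n}$ all lie in $\cM(\cU)_{\leq V}$. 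Upgrading this to a colimit over the entirety of $\cM(\cU)_{\leq V}$ then yields the desired identification, and naturality in $V$ is automatic from the construction of the comparison.

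The main obstacle is this last upgrade: the \v{C}ech poset of $\{W_\alpha\}$ need not be cofinal inside $\cM(\cU)_{\leq V}$ in the strict sense, since an element $W\in\cM(\cU)$ with $W\subseteq V$ need not be contained in any single $W_\alpha$. The resolution is an equivalent reformulation of the cosheaf axiom, namely that $M$ is a cosheaf if and only if for every $V\in\Open(\topX)$ and every family $\cP$ of open subsets of $V$ that covers $V$ and is closed under finite intersections in $\Open(\topX)$, one has $M(V)\cong \varinjlim_{W\in\cP}M(W)$. This reformulation follows by applying the standard \v{C}ech axiom to $\cP$ itself, the closure-under-intersection hypothesis supplying the cofinality of the \v{C}ech nerve of $\cP$ inside $\cP$. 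Applied to $\cP = \cM(\cU)_{\leq V}$ this delivers exactly the required isomorphism.
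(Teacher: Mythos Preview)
Your proof is correct and follows essentially the same route as the paper: write an element $V\in\pL(\cU)$ as a union of elements of $\cM(\cU)$ and invoke the cosheaf axiom on that cover to identify $M(V)$ with the colimit defining $(j_*i^*M)(V)$. The paper's argument is considerably terser---it simply names a \v{C}ech cover $\mathcal{C}_U$ and applies the axiom---whereas you explicitly flag and resolve the point that the indexing poset for the Kan extension is all of $\cM(\cU)_{\leq V}$, not merely the \v{C}ech diagram of some chosen subcover, and you handle this by observing that $\cM(\cU)_{\leq V}$ is itself closed under finite intersections and hence is already a \v{C}ech cover to which the axiom applies directly.
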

\begin{proof}
  Suppose $U$ is the union of some cover elements $\{U_k\}_{k\in\sigma}$ or their intersections.
  Let $\mathcal{C}_U$ be those cover elements along with their intersections.
  By the cosheaf axiom we know that
  \[
    \varinjlim_{i\in \mathcal{C}_U} M(U_i) \cong M(\bigvee \mathcal{C}_U) = M(U).
  \]
\end{proof}

When $\pL(\cU)$ is a $\delta$-approximation of $\Open(\topX)$ then we get the desired inference theorem for the interleaving distance between cosheaves.

\begin{thm}\label{thm:cosheaf-interleaving-approximation}
  If $\pL(\cU)$ is the lattice completion of a cover of a topological space $\topX$ and $T_{\bullet}$ is a superlinear family of translations on $\Open(\topX)$ where
  \[
    \forall V\in\Open(\topX) \quad \exists U\in \pL(\cU) \qquad \text{with} \qquad V \subseteq U \subseteq T_{\delta}(V)
  \]
  for a fixed $\delta>0$, then for any pair of cosheaves $M,N\in\CoShv(X)$ we have that
  \[
    |d_{\cM}(i^*M,i^*N) - d(M,N)|=|d_{\pL}(f^*M,f^*N) - d(M,N)| \leq 2\delta.
  \]
\end{thm}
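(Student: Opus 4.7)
The plan is to reduce the statement to a direct application of \cref{thm:delta-distortion} together with the identifications provided by \cref{cor:cover-completion-isometry} and \cref{prop:cosheaf-cover-identity}. The claim naturally splits into two pieces: the equality $d_{\cM}(i^*M,i^*N) = d_{\pL}(f^*M,f^*N)$, and the inequality bounding either quantity by $d(M,N)$ up to $2\delta$.

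For the equality, I would first apply \cref{cor:cover-completion-isometry}, which identifies $d_{\cM}(i^*M,i^*N)$ with $d_{\pL}(j_*i^*M, j_*i^*N)$ by exhibiting $j_*$ as an isometry. Next I would invoke \cref{prop:cosheaf-cover-identity}, which says that for a cosheaf $M$ on $\topX$ one has $j_*i^*M \cong f^*M$. This is precisely where the cosheaf hypothesis on $M$ and $N$ is essential: it guarantees that the value of $M$ on any element of $\pL(\cU)$ is reconstructed as the colimit over the diagram of cover-pieces indexed by $\cM(\cU)$. Chaining these two results yields $d_{\cM}(i^*M,i^*N) = d_{\pL}(f^*M,f^*N)$.

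For the inequality, I would apply \cref{thm:delta-distortion} to the inclusion $f\colon\pL(\cU)\hookrightarrow \Open(\topX)$, with the superlinear family $T_{\bullet}$ on $\Open(\topX)$ supplied by hypothesis. This requires verifying three things: that $\pL(\cU)$ is a complete lattice, that $f$ respects joins, and that $f$ is a $\delta$-approximation in the sense of \cref{defn:delta-approx}. The first is automatic because $\pL(\cU)$ is by construction closed under arbitrary unions in $\Open(\topX)$, so it is a complete join semilattice, and every complete join semilattice (with a bottom element) is automatically a complete lattice via the standard recipe that meets are joins of lower bounds. The second follows immediately since joins computed in $\pL(\cU)$ coincide with unions in $\Open(\topX)$. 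The third is the displayed containment hypothesis of the theorem, rewritten in the notation of \cref{defn:delta-approx}. \cref{thm:delta-distortion} then gives $|d(M,N) - d_{\pL}(f^*M,f^*N)| \leq 2\delta$, which together with the equality above finishes the proof.

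The main obstacle is really conceptual rather than computational: recognizing that the cosheaf axiom is the precise ingredient needed to identify the restriction $f^*M$ with the pushforward $j_*i^*M$ of its restriction to the meet completion. Without this identification, the intrinsic interleaving distance $d_{\cM}$, computed only from values on intersections of cover elements, would not be comparable to the interleaving distance between the original cosheaves $M$ and $N$ on $\topX$. Once this bridge is in place, the rest of the argument is simply an assembly of the isometry of \cref{thm:extend-restrict-interleavings} and the pixelization bound of \cref{thm:delta-distortion} developed in \cref{sec:relative-interleavings} and \cref{sec:approximation}.
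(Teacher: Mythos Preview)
Your proposal is correct and follows exactly the route the paper intends: the theorem is stated in the paper without an explicit proof, as an immediate consequence of \cref{cor:cover-completion-isometry}, \cref{prop:cosheaf-cover-identity}, and \cref{thm:delta-distortion}, which is precisely the chain of reductions you spell out. Your verification of the hypotheses of \cref{thm:delta-distortion} (completeness of $\pL(\cU)$, that $f$ respects joins, and the $\delta$-approximation condition) is accurate and fills in the small details the paper leaves implicit.
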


\subsection{Intrinsic Interleavings for Grid Opens}
\label{ssec:grid-opens}

In this section we consider a complete lattice $\pL$ that serves as a $\delta$-approximation of $\Open(\R^n)$ where the relative interleaving distances over $\pL$ can be computed explicitly.
This is done by considering a cell structure on $\R^n$, whose cover by open stars gives rise to a cover whose meet completion is isomorphic to the face relation poset of this cell structure.
Our approximating lattice will be unions of open stars of cells, which we call ``grid opens.''
Our superlinear family of metric translations pulls back to a translation that comes from a weighting on the face relation poset of our cell structure.

We begin by considering an explicit cell structure on $\R^n$.
Let
\[
  \Lambda:= \{\delta (k_1,\ldots,k_n)\in\R^n \mid k_i\in \mathbb{Z}\}
\]
be the geometric lattice\footnote{This lattice is not to be confused with a poset that is equipped with a meet and join operation.} generated by a $\delta$-scaling of the standard basis $\{e_i\}$ for $\R^n$.
Associated to this is regular cell complex where each cell $|\sigma|$ is homeomorphic to an open cube $(0,1)^{d}$ of some dimension $d\in\{0,\ldots,n\}$; note that we assume that $(0,1)^0$ is a one point space.
We will relate this cell structure to the meet completion of a particular cover of $\R^n$.

\begin{defn}
  Let $x\in \Lambda$ be a point in the above geometric lattice.
  Let
  \[
    U_{x}:=\{(y_1,\ldots,y_n) \mid \max_{i} |x_i-y_i|<\delta\}
  \]
  be the $\delta$-ball around $x$ measured in the sup norm $||\cdot||_{\infty}$.
  Let $\cU_{\Lambda}$ be the collection of such $\delta$-balls adapted to the lattice $\Lambda$.
\end{defn}

Following \cref{defn:cover-semilattices}, we now consider the meet completion of $\cU_{\Lambda}$, written $\cM(\cU_{\Lambda})$.
Note that the elements of $\cM$ are given by open sets of the form
\[
  U_{\sigma} := \cap_{x\in \sigma} U_x
\]
for $\sigma\subseteq \Lambda$.
We now relate the meet completion to the obvious cell structure on $\R^n$ induced by $\Lambda$.

\begin{prop}
  The geometric lattice $\Lambda$ defined above induces an obvious cubical cell structure on $\R^n$.
  Let $\Cell(\Lambda)$ be the \define{face relation poset} of this cell structure, i.e. we say that two cells $\sigma\leq \tau$ if there is a reverse containment of their closures: $\bar{|\sigma|} \supseteq |\tau|$.
  With this choice of partial order the map
  \[
    \text{star}\colon \Cell(\Lambda) \to \cM(\cU_{\Lambda}) \qquad \sigma \squigrightarrow \text{star}(|\sigma|)
  \]
  is an isomorphism onto its image.
\end{prop}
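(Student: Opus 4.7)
The plan is to identify both the open star of $\sigma$ and the candidate image $U_{V(\sigma)} := \bigcap_{x \in V(\sigma)} U_x$ with the same explicit product box in $\R^n$, and then read off the poset-theoretic properties of $\mathrm{star}$ from this box description. Parameterise each cubical cell $\sigma$ by its coordinate type: for each axis $i$, either $\sigma$ is \emph{fixed} with every vertex having $i$-th coordinate $c_i \in \delta\Z$, or $\sigma$ is \emph{varying} with vertices taking both values $c_i$ and $c_i + \delta$. This data simultaneously determines the cell $\sigma$ and its vertex set $V(\sigma)$.

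The first step is the direct calculation
\[
  \bigcap_{x \in V(\sigma)} U_x = \prod_{i \text{ fixed}} (c_i - \delta,\, c_i + \delta) \; \times \; \prod_{j \text{ varying}} (c_j,\, c_j + \delta).
\]
A fixed coordinate contributes only the single inequality $|y_i - c_i| < \delta$, while a varying coordinate contributes two inequalities whose conjunction carves out the open interval $(c_j, c_j + \delta)$. Next, one classifies every cell $\tau$ of which $\sigma$ is a face by its own coordinate type---fixed coordinates of $\sigma$ are allowed to remain fixed at $c_i$ or to open up into a varying coordinate on either side of $c_i$, while varying coordinates of $\sigma$ must stay varying---and checks that the union of the open cells $|\tau|$ of such $\tau$ sweeps out exactly the same product box. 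Hence $\mathrm{star}(|\sigma|) = U_{V(\sigma)}$ and, in particular, the map lands in $\cM(\cU_\Lambda)$.

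With this box formula in hand, the remaining properties are extracted axis by axis. Monotonicity is immediate: if $\sigma \leq \tau$ in $\Cell(\Lambda)$, that is, $\tau$ is a face of $\sigma$, then $V(\tau) \subseteq V(\sigma)$ and so $U_{V(\sigma)} \subseteq U_{V(\tau)}$. For the fullness (order-reflection) step, suppose $U_{V(\sigma)} \subseteq U_{V(\tau)}$ and compare the two boxes coordinate-wise. A fixed axis of $\sigma$ has interval of width $2\delta$, which cannot fit inside an interval of width $\delta$, so $\tau$ must be fixed at the same value $c_i$ along that axis. A varying axis of $\sigma$ has interval $(c_j, c_j + \delta)$, which is contained in another width-$\delta$ interval $(c', c' + \delta)$ only when $c' = c_j$, and is contained in a width-$2\delta$ interval $(c' - \delta, c' + \delta)$ only when $c' \in \{c_j, c_j + \delta\}$. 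Each permitted case is exactly one of the options for $\tau$ to be a face of $\sigma$, so $\sigma \leq \tau$ in $\Cell(\Lambda)$; injectivity of $\mathrm{star}$ then follows from order-reflection together with antisymmetry of the face poset.

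The main obstacle is not conceptual but is the coordinate-wise case analysis in the order-reflection step. The key input is the strict inequality $2\delta > \delta$ between the widths of fixed and varying intervals, which rules out all accidental inclusions and forces any containment of product boxes to come from an honest face relation. This is the one place where the specific geometry of the cover $\cU_\Lambda$ in relation to the cubical complex really matters; everything else is bookkeeping that flows from the box formula.
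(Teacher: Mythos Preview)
Your proof is correct and follows the same core identification as the paper, namely $\mathrm{star}(|\sigma|) = \bigcap_{x \in V(\sigma)} U_x$, but the paper's argument is far terser: it simply asserts this equality as ``easy to see'' and remarks that $\cM(\cU_\Lambda)$ contains the empty set while $\Cell(\Lambda)$ does not, without writing out any coordinate analysis or verifying order-reflection. Your explicit product-box formula and the axis-by-axis case analysis for fullness are genuine additions that fill in what the paper leaves to the reader; in particular, your observation that the width discrepancy $2\delta > \delta$ is what forces containments of boxes to come from honest face relations is exactly the mechanism underlying the paper's unproved claim, and makes the argument self-contained.
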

\begin{proof}
  If we consider a cell $|\sigma|$ then then let $v(\sigma)$ denote the vertices in its closure.
  It's easy to see that that the open star of the cell $|\sigma|$ is equivalently viewed as
  \[
    U_{v(\sigma)}=\bigcap_{x\in v(\sigma)} U_x.
  \]
  The only reason we say an isomorphism onto its image is that $\cM(\cU_{\Lambda})$ includes the empty set, whereas $\Cell(\Lambda)$ has no element corresponding to that.
\end{proof}

We now give a geometric characterization of the lattice completion of $\cU_{\Lambda}$ in terms of the face relation poset $\Cell(\Lambda)$.
Note that since the only element not witnessed by $\cM(\cU_{\Lambda})$ is the empty set, the join completion of the image of $\Cell(\Lambda)$ under the open star map will equal the join completion of $\cM(\cU)$.

\begin{defn}\label{def:GridO}
  The poset of \define{grid opens}, denoted $\GridO^n_\delta$, or simply $\GridO$ when the parameters are clear, is any open set of the form
  \[
    U = \bigcup_{\sigma\in\Cell(\Lambda)} \text{star}(|\sigma|).
  \]
  We also declare that the empty set is a grid open.
  Let
  \[
    f\colon \GridO^n_{\delta} \to \Open(\R^n)
  \]
  denote the obvious inclusion. For a visualization of some grid opens, we refer to \cref{fig:CellComplexExamples}.
\end{defn}

In order to proceed with our approximation theory, we now check that $f$ is a $\delta$-approximation.

\begin{lem}
  \label{lem:GridO_DeltaApprox}
  Let $T_{\bullet}$ denote the metric translation of $\Open(\R^n)$ with respect to he $\ell_{\infty}$ norm.
  The map $f\colon \GridO_\delta^n \to \Open(\R^n)$ is a $\delta$-approximation.
\end{lem}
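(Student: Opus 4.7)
The plan is to exhibit, for each $V \in \Open(\R^n)$, a grid open $U \in \GridO^n_\delta$ satisfying $V \subseteq U \subseteq T_\delta(V) = V^\delta$. The natural candidate is the union of open stars of the cells that meet $V$:
\[
  U \;:=\; \bigcup \{\,\text{star}(|\sigma|) \;:\; \sigma \in \Cell(\Lambda),\ |\sigma| \cap V \neq \varnothing\,\}.
\]
By construction this is a grid open, so once I verify the two inclusions the lemma is established.

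For the first inclusion $V \subseteq U$, I would use the fact that the open cells of the cubical complex associated with $\Lambda$ partition $\R^n$. Thus every $x \in V$ lies in a unique open cell $|\sigma|$, and then $|\sigma|\cap V \ne \varnothing$ by assumption. Since $|\sigma| \subseteq \text{star}(|\sigma|)$, we conclude $x \in U$.

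The second inclusion $U \subseteq V^\delta$ is the substantive step. Take $y \in U$; by definition there is a cell $\sigma$ with $|\sigma| \cap V \neq \varnothing$ and a cell $\tau$ with $\sigma \leq \tau$ (so $|\sigma| \subseteq \overline{|\tau|}$) such that $y \in |\tau|$. Choose $x \in |\sigma| \cap V \subseteq \overline{|\tau|}$. The key geometric observation is that for \emph{any} point $y$ in the open cube $|\tau|$ and any point $x$ in its closure $\overline{|\tau|}$, one has $\|y - x\|_\infty < \delta$. Indeed, $|\tau|$ is a product in which each varying coordinate ranges over an open interval $(\delta k_i, \delta(k_i+1))$ and each fixed coordinate is a grid value shared with $x$ (as $x \in \overline{|\tau|}$); for the varying coordinates, $y_i \in (\delta k_i, \delta(k_i + 1))$ forces $|y_i - x_i| < \delta$ strictly, while the fixed coordinates contribute $0$. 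Thus $y \in B_\infty(x,\delta) \subseteq V^\delta$ since $x \in V$.

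The only step I expect to require care is the strict inequality $\|y - x\|_\infty < \delta$, which is what allows $y$ to land in the open ball $B_\infty(x, \delta)$ rather than merely its closure; this crucially uses that $y$ lies in the \emph{open} cell $|\tau|$, not its closure. Everything else is bookkeeping on the definitions of open star, grid open, and metric thickening.
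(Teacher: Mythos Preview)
Your proof is correct and follows essentially the same approach as the paper: define $U$ as the union of open stars of cells meeting $V$, use that open cells partition $\R^n$ for $V\subseteq U$, and bound the $\ell_\infty$-diameter of a closed cube by $\delta$ for $U\subseteq V^\delta$. Your write-up is in fact more careful than the paper's in insisting on the strict inequality $\|y-x\|_\infty<\delta$ (needed because $V^\delta$ is a union of \emph{open} balls); one small caveat is that the paper's convention on $\Cell(\Lambda)$ has $\sigma\leq\tau$ meaning $\overline{|\sigma|}\supseteq|\tau|$, so your parenthetical ``$\sigma\leq\tau$ (so $|\sigma|\subseteq\overline{|\tau|}$)'' reverses the order symbol, though the geometric relation you use is the correct one.
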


\begin{proof}
Given $V \in \Open(\R^n)$, let
\[
  U=\bigcup_{|\sigma|\cap V \neq \varnothing} \text{star}(|\sigma|)
\]
be the union of the open stars of those cells with non-trivial intersection with $V$.
Since the interiors of each cell partition $\R^n$, we obviously have that $V\subseteq U$.
Moreover any point $y \in U$, we know $y \in |\sigma|$ for some $\sigma$ with $|\sigma| \cap U \neq \emptyset$.
Say $x \in |\sigma| \cap U$ and note that $\|y-x\|_\infty \leq \delta$.
Thus $\|x-V\|_\infty \leq \delta$, so $U \subseteq V^\delta=T_{\delta}(V)$.
\end{proof}

\begin{figure}
  \includegraphics[width = .8\textwidth]{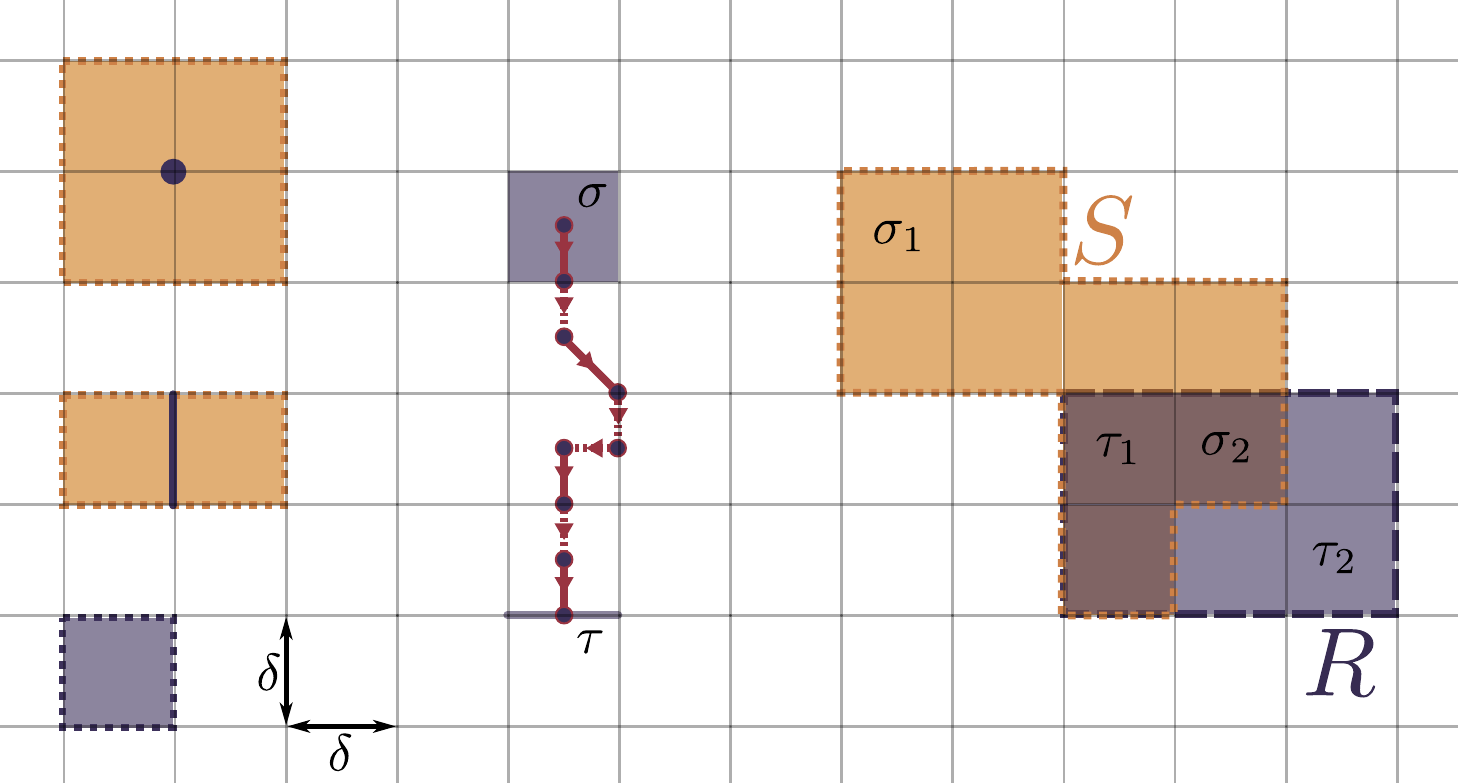}
  \caption{
  \textit{Left column:} examples of principal downsets in $\GridO_\delta^2 = \Down(\Cell_\delta^2)$ for a 0-, 1-, and 2-cell.
  \textit{Middle:} a path from the top 2-cell $\sigma$ to the bottom 1-cell $\tau$ where solid red edges constitute a pair with $w(\gamma_i,\gamma_{i+1}) = \delta$, and dashed where it is 0.
  In fact, this path achieves the weighting $w(\sigma,\tau) = 4\delta$ even though it does not have minimum length.
  \textit{Right:} Given $S,R \in \GridO$, $w_\infty (S,R) = w(\sigma_1, \tau_1) = 2\delta$, and $w_\infty(R,S) = w( \tau_2, \sigma_2) = \delta$.
  }
  \label{fig:CellComplexExamples}
\end{figure}

Applying \cref{thm:cosheaf-interleaving-approximation} to $\GridO$ shows that we can approximate the interleaving distance between two cosheaves over $\R^n$ using only open stars of cells defined by $\Lambda$.

\begin{cor}\label{cor:delta-grid-approximation}
  Let $\Open(\R^n)$ be equipped with the $\ell_{\infty}$-metric translation $T_{\bullet}$.
  Let $\Cell(\Lambda)$ be the cell structure induced by the $\delta$-net of points described above and let $\GridO$ denote the set of grid opens in $\R^n$.
  For any pair of cosheaves $M,N\in\CoShv(\R^n)$ we have that
  \[
    |d_{\Cell(\Lambda)}(i^*M,i^*N) - d(M,N)|=|d_{\GridO}(f^*M,f^*N) - d(M,N)| \leq 2\delta.
  \]
\end{cor}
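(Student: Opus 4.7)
The plan is to recognize this corollary as a direct instance of \cref{thm:cosheaf-interleaving-approximation} combined with the isometry in \cref{cor:cover-completion-isometry}, once we have correctly identified the cover and its completions. I will carry out the following steps in order.

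First I would verify the identifications $\pL(\cU_\Lambda) \cong \GridO^n_\delta$ and $\cM(\cU_\Lambda) \cong \Cell(\Lambda)$ (extended by the empty set). The first identification holds by \cref{def:GridO} since every grid open is by definition a union of open stars of cells, each of which is the intersection $U_{v(\sigma)} = \bigcap_{x\in v(\sigma)}U_x$ of cover elements of $\cU_\Lambda$; conversely every element of $\pL(\cU_\Lambda)$ is such a union. The second identification is the open star map $\text{star}\colon \Cell(\Lambda) \to \cM(\cU_\Lambda)$, which was shown above to be an isomorphism onto its image (with the only discrepancy being the empty set, which is a terminal element in $\Down$-style constructions and contributes nothing to interleavings).

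Second I would verify the hypotheses of \cref{thm:cosheaf-interleaving-approximation}. The map $f\colon \GridO^n_\delta \hookrightarrow \Open(\R^n)$ respects joins because unions of grid opens are grid opens, and \cref{lem:GridO_DeltaApprox} says exactly that $f$ is a $\delta$-approximation with respect to the $\ell_\infty$ metric translation $T_\bullet$ on $\Open(\R^n)$. Invoking \cref{thm:cosheaf-interleaving-approximation} then yields
\[
  |d_{\pL}(f^*M,f^*N) - d(M,N)| \leq 2\delta
\]
and simultaneously the equality $d_{\cM}(i^*M,i^*N) = d_{\pL}(f^*M,f^*N)$, where the latter uses \cref{prop:cosheaf-cover-identity} together with \cref{cor:cover-completion-isometry} to identify $f^*M$ with $j_*i^*M$ for cosheaves $M$.

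Finally, transporting along the isomorphism $\cM(\cU_\Lambda) \cong \Cell(\Lambda)$ turns $d_{\cM}$ into $d_{\Cell(\Lambda)}$, which completes the chain of equalities and the stated bound. The main obstacle I anticipate is purely bookkeeping: one has to confirm that the superlinear family $T_\bullet$ and its lower approximation $T^\flat_\bullet$ (via \cref{lem:lower-approximate-translation} and \cref{prop:shift-inner}) pull back consistently through both $f$ and $i$, so that the three interleaving distances $d_{\Open(\R^n)}$, $d_{\GridO}$, and $d_{\Cell(\Lambda)}$ really are the ones meant in the corollary's statement. Since the cosheaf hypothesis on $M$ and $N$ is exactly what converts the pushforward $j_*i^*$ into the restriction $f^*$, and since the cover $\cU_\Lambda$ is assembled explicitly from cells, this verification is routine once the identifications above are pinned down.
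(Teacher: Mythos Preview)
Your proposal is correct and follows essentially the same approach as the paper, which simply states that the corollary is obtained by applying \cref{thm:cosheaf-interleaving-approximation} to $\GridO$ (with \cref{lem:GridO_DeltaApprox} supplying the $\delta$-approximation hypothesis). You have supplied more detail than the paper does---in particular the explicit identifications $\pL(\cU_\Lambda)\cong\GridO^n_\delta$ and $\cM(\cU_\Lambda)\cong\Cell(\Lambda)$ via the open-star map---but these are exactly the identifications the paper establishes in the preceding discussion, so your argument is the intended one spelled out in full.
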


To better appreciate this result, we consider a more combinatorial description of the lower approximation translation $T^{\flat}$ of grid opens induced from $\Open(\R^n)$.
In the setting of $n=1$ we can use the upper half plane to visualize how connected open sets are translated by $T$ and then lower approximated by $T^{\flat}$.

For $n\geq 2$ visualizations are not so accessible if one tries to view all grid opens at once.
Instead we can use the weighted poset framework of \cref{defn:weighted-poset}.

\begin{defn}\label{defn:weighted-cell-structure}
Let $w\colon\Cell_\delta^n \times \Cell_\delta^n \to \R$ denote a weighting that only takes integer multiples of $\delta$.
We define this weight as follows:
\begin{itemize}
	\item The Lawvere distance from a cell to any of its cofaces is 0, i.e. if $\tau\leq \sigma$ then we set $w(\sigma,\tau) = 0$.
	\item The Lawvere distance from a celll to any of its faces is $\delta$, i.e. if $\sigma \leq \tau$, set $w(\sigma,\tau) = \delta$.
	\item If $\sigma$ and $\tau$ are incomparable, we set $w(\sigma,\tau)$ to be the weight of the shortest length path in $\Cell_\delta^n$. Here a ``path'' $\gamma$ from $\sigma$ to $\tau$ in $\Cell_\delta^n$ is a zig-zag of comparable elements $\sigma = \gamma_0, \gamma_1,\gamma_2,\cdots,\gamma_k=\tau$. In this setting
	\[
		w(\sigma,\tau) = \inf_{\gamma\colon \sigma \squigrightarrow \tau} \sum w(\gamma_i,\gamma_{i+1}).
	\]
\end{itemize}
\end{defn}

It is immediate to check that $w$ constitutes a weighting on the poset $\Cell_\delta^n$.
Recall that the directed ball associated to this weighting is given by
\[
  \overrightarrow{B}_w(\sigma;\epsilon) = \{\rho \in \Cell_\delta^n \mid w(\sigma, \rho) \leq \e \}
\]
In particular
\[
  \overrightarrow{B}_w(\sigma;0) = \text{star}(|\sigma|).
\]
To simplify this notation and make sure that we always associate a grid open to a cell $\sigma$ and a non-negative real number $\e\geq 0$, we set
\[
  \text{star}(|\sigma|;\e):=\bigcup \text{star}(|\rho|) \qquad \text{for} \qquad \rho\in \overrightarrow{B}_w(\sigma;\epsilon)
\]
The following lemma is straightforward, albeit tedious to check.

\begin{lem}\label{lem:grid-lower-translation}
The lower approximation translation $T^{\flat}_{\bullet}$ of the $\ell_{\infty}$-metric translation $T$ can be internally characterized by the weighting $w$ on $\Cell^n_{\delta}$ from \cref{defn:weighted-cell-structure} by the formula
\[
  T^{\flat}\colon\GridO\times [0,\infty) \to \GridO \qquad U=\bigcup \text{star}(|\sigma|) \squigrightarrow U^{\e}= \bigcup \text{star}(|\sigma|;\e).
\]
Notice that $T^{\flat}$ only changes at integer multiples of $\delta$, i.e.~$T^{\flat}_\e = T^{\flat}_{\lfloor{\e/\delta}\rfloor\delta}$.
\end{lem}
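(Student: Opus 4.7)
The plan is to unwind the definition $T^{\flat}_{\e} = f_{\flat}\circ T_{\e}\circ f$ for the inclusion $f\colon \GridO^{n}_{\delta}\hookrightarrow \Open(\R^{n})$, using that $f_{\flat}(V)$ is the largest grid open contained in $V$. The formula then reduces to identifying which cell stars $\text{star}(|\rho|)$ sit inside $T_{\e}(U)$.

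First I would establish the single-cell version: for cells $\sigma,\rho\in\Cell^{n}_{\delta}$ and $\e\geq 0$,
\[
  \text{star}(|\rho|)\subseteq T_{\e}(\text{star}(|\sigma|))
  \quad\Longleftrightarrow\quad
  w(\sigma,\rho)\leq \e.
\]
For the $(\Leftarrow)$ direction, I would induct on the length of a shortest weighted path $\sigma=\gamma_{0},\ldots,\gamma_{k}=\rho$ realizing $w(\sigma,\rho)$. A coface step (weight $0$) only shrinks the star while preserving containment, whereas a face step (weight $\delta$) enlarges the star by at most $\delta$ in every $\ell_{\infty}$-coordinate, which is absorbed by the subadditivity of $T_{\bullet}$. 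For the $(\Rightarrow)$ direction, I would exploit the product structure: the cubical complex $\Cell^{n}_{\delta}$ is a product of $1$-dimensional complexes, and both $\text{star}(|\rho|)$ and $T_{\e}(\text{star}(|\sigma|))$ factor as products of open intervals in each coordinate because $\ell_{\infty}$-thickening respects products. Containment thus reduces to a $1$-dimensional claim that is a direct case analysis over the four vertex/edge combinations, showing the minimum $\e$ forcing containment in coordinate $k$ equals the $1$-dimensional weight. The higher-dimensional weight then emerges as the maximum over coordinates by constructing a coordinate-aligned zig-zag path.

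With the single-cell case in hand, I would extend to arbitrary $U = \bigcup \text{star}(|\sigma|)$. The inclusion $\bigcup \text{star}(|\sigma|;\e)\subseteq T^{\flat}_{\e}(U)$ is immediate: each star on the left sits inside some $T_{\e}(\text{star}(|\sigma|))\subseteq T_{\e}(U)$ by the single-cell case, hence inside the largest grid open $T^{\flat}_{\e}(U)$ in $T_{\e}(U)$. The reverse inclusion asks that any star $\text{star}(|\rho|)\subseteq T_{\e}(U)$ must already fit inside some single $T_{\e}(\text{star}(|\sigma|))$, and this is the main obstacle because in general an open box contained in a union of open boxes need not lie in any one of them. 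The argument here relies on the combinatorial rigidity of the cubical lattice: both $\text{star}(|\rho|)$ and each $T_{\e}(\text{star}(|\sigma|))$ are axis-aligned open boxes with vertices on $\Lambda$, and a coordinate-by-coordinate application of the $1$-dimensional single-cell claim forces the containment into a single summand. The final observation $T^{\flat}_{\e} = T^{\flat}_{\lfloor\e/\delta\rfloor\delta}$ then follows because both sides of the formula depend on $\e$ only through the weight function $w$, which takes values in $\delta\Z_{\geq 0}$.
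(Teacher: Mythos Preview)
The paper offers no proof beyond the remark that the lemma is ``straightforward, albeit tedious to check,'' so there is nothing to compare your argument against directly. Your single-cell equivalence
\[
  \text{star}(|\rho|)\subseteq T_{\e}(\text{star}(|\sigma|))
  \quad\Longleftrightarrow\quad
  w(\sigma,\rho)\leq \e
\]
is the right core computation and your coordinate-wise reduction to the $1$-dimensional case is a clean way to establish it.

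The genuine gap is in the passage to arbitrary $U$ and in the final sentence. Your ``single summand'' claim---that $\text{star}(|\rho|)\subseteq T_{\e}(U)$ forces $\text{star}(|\rho|)\subseteq T_{\e}(\text{star}(|\sigma|))$ for some $\sigma$---fails when $\e\notin\delta\Z$. In $\R^{1}$ with $\delta=1$, take $U=(0,1)\cup(3,4)$ and $\e=1.5$. Then $T_{1.5}(U)=(-1.5,5.5)$ contains $\text{star}(|2|)=(1,3)$, but neither $T_{1.5}((0,1))=(-1.5,2.5)$ nor $T_{1.5}((3,4))=(1.5,5.5)$ does. Consequently $T^{\flat}_{1.5}(U)=f_{\flat}((-1.5,5.5))=(-1,5)$, whereas the weighted formula gives $(-1,2)\cup(2,5)$. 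The same example shows $T^{\flat}_{1.5}\neq T^{\flat}_{1}$, so your last paragraph is also wrong: the left side $T^{\flat}_{\e}=f_{\flat}\circ T_{\e}\circ f$ depends on $\e$ through the metric thickening $T_{\e}$, not through $w$; only the right side of the formula is locally constant in $\e$. Your justification that the thickened stars ``have vertices on $\Lambda$'' is precisely what breaks for non-integer $\e$.

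The lemma should be read as an identity for $\e\in\delta\Z_{\geq 0}$, and for such $\e$ there is a simpler route that avoids the single-summand issue altogether. When $\e\in\delta\Z_{\geq 0}$, the box $T_{\e}(\text{star}(|\sigma|))$ has lattice endpoints and is therefore itself a grid open; your single-cell equivalence then identifies it with $\text{star}(|\sigma|;\e)$ on the nose. Hence $T_{\e}(U)=\bigcup_{\sigma}T_{\e}(\text{star}(|\sigma|))=\bigcup_{\sigma}\text{star}(|\sigma|;\e)$ is already a grid open, so $f_{\flat}$ acts as the identity on it and $T^{\flat}_{\e}(U)=\bigcup_{\sigma}\text{star}(|\sigma|;\e)$ follows immediately.
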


\begin{rmk}\label{rmk:the-utility-of-cosheaves}
The upshot of \cref{cor:delta-grid-approximation} and \cref{lem:grid-lower-translation} is that if $M$ and $N$ are cosheaves over $\R^n$ that are finitely interleaved, then one can infer this interleaving distance using only finitely many computations as well as by only knowing the values of $M$ and $N$ on open stars of cells in $\Lambda$.
This gives strong locality and finiteness results for interleavings of cosheaves defined on $\R^n$.
Of course \cref{thm:delta-distortion} and \cref{lem:grid-lower-translation} also demonstrate that if two arbitrary $\Open(\topX)$-modules $M$ and $N$ are finitely interleaved then we can discover this approximate interleaving distance using only finitely many computations, but with the caution that one must have access to the values of $M$ and $N$ on arbitrary grid opens.
This may be an unrealistic assumption and displays the utility of working with topological summaries that are cosheaves.
Additionally, existing work shows that in many cases computation of the interleaving distance is NP-hard, see~\cite{bjerkevik_et_al:LIPIcs:2018:8726} and the references therein.
\end{rmk}

\section{Conclusion}

In this paper we considered how to define interleavings of $\pP$-modules using an interleaving theory over $\pQ$ and a map of posets $f:\pP \to \pQ$.
Our motivation for taking up this question is that many of the interleaving theories discussed in~\cite{Bubenik2014a,stefanou2018,de2018theory} use the notion of a superlinear family of translations and many natural posets, such as those of zig-zag type, do not have any non-trivial translations.
A correction to this deficiency is hinted at in~\cite{botnan2018algebraic}, where one embeds a zig-zag poset $\pZ$ into $\R^2$ and uses translations defined over $\R^2$, which is extended in this paper.
We observed here that a more intrinsic rescue from this problem is given by considering a superlinear family of translations on down sets in a poset $\pP$.

In order to develop a fully general, relative interleaving theory, we find that if one pushes forward a module on $\pP$ to define a $\pQ$-module and then uses translation operations over $\pQ$, then upon restriction one has defined a notion of a ``shift relative to $f$.''
We observed in \cref{lem:full-restricts-interleavings} that using the notion of weak (or pentagonal) interleavings allows us to always restrict an interleaving over $\pQ$ to get an interleaving, relative to $f$, over $\pP$.
This was crucial to our main theorem that allowed us to ignore any differences between interleaving over $\pQ$ or ``intrinsic'' interleavings over $\pP$.
This proof seemed to rely on the extra arrows that are used in the definition of a weak interleaving.
This leads us to our first question:
\begin{center}
  {\bf Open Question 1:}
\end{center}
\begin{quote}
  Does \cref{lem:full-restricts-interleavings} hold when we work with strict interleavings over $\pP$ and $\pQ$, rather than weak interleavings? Such an analysis would reveal further theoretical properties that weak interleavings of~\cite{de2018theory} enjoy over strict interleavings.
\end{quote}

We then proceeded to use this relative interleaving theory to prove that with an extra $\delta$-approximation condition, one can use the interleaving distance over $\pP$ to infer the interleaving distance over $\pQ$.
The motivation for this was two-fold:
\begin{enumerate}
  \item We want to be able to ``pixelate'' a module defined over a continuous poset $\pQ$ using a discrete poset $\pP$ and guarantee that this pixelation is not far off from the original module.
  \item We'd like to be able to perform interleaving inference.
\end{enumerate}
Although, we had two notions of pixelization---upper and lower---we mostly used the lower one because it allowed us to define a super-linear family of translations of $\pP$-modules.
In fact, \cref{rmk:upper-translation} showed that the upper approximation translation does not have this property.
This leads us to our second question:
\begin{center}
  {\bf Open Question 2:}
\end{center}

\begin{quote}
  Instead of using the left Kan extension along a map of posets $f\colon\pP\to\pQ$ and a superlinear family of translations over $\pQ$, what would happen if we chose to develop a relative interleaving theory that interacted well with the right Kan extension $f_{\dagger}$?
  Some preliminary work suggests that the \emph{upper} pixelization functor might have better algebraic properties than the lower pixelization functor, especially when studying Reeb cosheaves.
  The deficiency with using the upper pixelization is that currently no existing interleaving theory directly accounts for the reversed arrows one would need to introduce to define interleavings properly in this setting.
  Presumably, one could dualize the theory of~\cite{stefanou2018} to define a distance using an \emph{op}-lax monoidal functor from $[0,\infty)$ to $\Trans_{\pQ}$, but then one would need to check that things like the triangle inequality hold.
\end{quote}

Finally, we conclude that our study suggests that further study of sheaves and cosheaves over a metric space is necessary, especially if one wants to extend the theories of sampling and inference outlined in~\cite{robinson2015sheaf} and here.
Although plenty of work is already under way, we believe the study of algebraic structures at varying scales is a fruitful area of research with interesting theoretical and practical components.

\section*{Acknowledgements}
MBB has been partially supported by the DFG Collaborative Research Center SFB/TR 109 ``Discretization in Geometry and Dynamics''.
The work of JC was supported in part by NSF Grant No.~CCF-1850052.
JC would also like to thank Hans Riess for answering questions about lattice theory, which greatly improved the last two sections of this paper.
The work of EM was supported in part by NSF Grant Nos.~NSF CCF-1907591, DMS-1800446 and CMMI-1800466.

\appendix

\section{Review of Kan Extensions, Cofinality and Cosheaves}

In this section we provide a more detailed recollection of Kan extensions and cofinality.
We also provide a recollection of the statement that the left Kan extension of a module along the full and faithful inclusion into its lattice of down sets produces a cosheaf, which is the content of \cref{thm:kan-cosheaves} and whose proof is detailed in~\cite{curry2019kan}.

\subsection{Comma Categories}

\begin{defn}\label{defn:comma-cat}
Suppose $E\colon\aat\to\bat$ is a functor and let $b$ be an object of $\bat$.
The \define{comma category under $b$}, written $(E\downarrow b)$, is defined as follows:
\begin{itemize}
	\item The objects of $(E\downarrow b)$ are morphisms in $\bat$ of the form $\alpha\colon E(a) \to b$  where $a$ is any object of $\aat$.
	\item A morphism of $(E\downarrow b)$ between two objects $\alpha\colon E(a) \to b$ and $\alpha'\colon E(a') \to b$ is a morphism $\gamma\colon a \to a'$ in $\aat$ making the following diagram commute:
	\[
		\xymatrix{E(a) \ar[rr]^{E(\gamma)} \ar[rd]_{\alpha} & & E(a') \ar[ld]^{\alpha'} \\ & b &}
	\]
\end{itemize}
There is also a \define{comma category over $b$}, written $(b \downarrow E)$, that is defined completely dually: objects are morphisms in $\bat$ of the form $\alpha\colon b \to E(a)$ for some $a$ in $\aat$, morphisms are morphisms in $\aat$ making the dual triangle commute:
    \[
    	\xymatrix{ & b \ar[dl]_{\alpha} \ar[dr]^{\alpha'} &  \\ E(a) \ar[rr]_{E(\gamma)} & & E(a') }
    \]
\end{defn}

For an example of a comma category, we consider the special case of maps between posets.

\begin{ex}
Recall that a map of posets $f\colon\pP\to\pQ$ is equivalently a functor.
Substituting $f$ for $E$ in the above definition leads to the following interpretations:
The comma category $(f\downarrow q)$ is simply the sub-poset of $\pP$ consisting of those $p$ such that $f(p)\leq q$, which one might call the ``sublevel set of $f$ at $q$.''
The comma category $(d\downarrow f)$ is thus the superlevel set of $f$ at $q$.
\end{ex}

\subsection{Kan Extensions}

The comma category $(E\downarrow b)$ associated to a functor $E\colon\aat\to\bat$ and an object $b$ in $\bat$, has a natural projection functor $\pi^b \colon (E\downarrow b) \to \aat$ that sends an object $\alpha\colon E(a) \to b$ to the object $a$ in $\aat$, a morphism $\gamma\colon a\to a'$ goes to the same morphism in $\aat$.
This observation, and this particular choice of comma category, allows us to define the \emph{left Kan extension} of a functor $F\colon \aat \to \cat$ \emph{along} the functor $E\colon \aat \to \bat$.

\begin{defn}[Pointwise Kan Extensions, cf.~\cite{riehl2017category} Thm. 6.2.1]\label{defn:pointwise-left-Kan}

The \define{left Kan extension} of $F\colon \aat \to \cat$ along $E\colon \aat \to \bat$ is a functor $\Lan_E F\colon \bat \to \cat$ that assigns to an object $b$ of $\bat$ the following colimit
\[
	\Lan_E F (b) = \varinjlim \left( (E \downarrow b) \xrightarrow[]{\pi} \aat \xrightarrow[]{F} \cat \right) = \varinjlim_{E(a) \rightarrow b} F(a)
\]
Morphisms are sent to corresponding universal maps between colimits.
\end{defn}

\begin{ex}
Suppose $j\colon \pP \hookrightarrow \pQ$ is an inclusion of posets and suppose $M\colon\pP \to \cat$ is a functor.
The left Kan extension of $M$ along $j$ assigns to an element $q\in Q$ the colimit of $M$ over the sublevel set of $j$ at $q$.
Note that this uses the fact that there there is at most one morphism of the form $j(p) \leq q$.
Moreover, if the inclusion is full, i.e.~if $p\leq_{\pP} p'$ if and only if $j(p)\leq_{\pQ} j(p')$, then the colimit can be viewed as occuring over all $p\in \pP$ such that $j(p)\leq q$.
\end{ex}

The following example is of utmost importance.

\begin{ex}
Let $\iota\colon \pP \hookrightarrow \Down(\pP)$ denote the map of posets that sends $p\in \pP$ to the principal downset $D_p$.
Let $S\in \Down(\pP)$ be an arbitrary downset.
The reader is asked to convince themselves that the comma category $(\iota \downarrow S)$ is given by the full subcategory of $\pP$ whose objects are those $p\in S$, which we write as $\pP_S$.
Consequently, if we wish to consider the left Kan extension of a $\pP$-module
$M\colon\pP \to \cat$,
then we have that
\[
	\iota_*M(S) := \Lan_{\iota} M(S) = \varinjlim \left( \pP_S \hookrightarrow \pP \to \cat \right) = \varinjlim_{p\in S} M(p).
\]
\end{ex}

The following lemma can be deduced from Proposition 6.1.5 of~\cite{riehl2017category}, but we include it here for the reader's convenience.

\begin{lem}\label{app:lem:push-pull-unit-counit}
  Let $f\colon\pP \to \pQ$ be a map of posets and let $f^*\colon\Fun(\pQ,\cat) \to \Fun(\pP,\cat)$ be the pullback and $f_*\colon\Fun(\pP,\cat) \to \Fun(\pQ,\cat)$ be the pushforward functors defined in \cref{defn:pullback} and \cref{defn:pushforward}.
  There are natural transformations
  \[
    \upsilon \colon \id_{\cat^{\pP}} \Rightarrow f^*f_* \qquad \text{and} \qquad \chi \colon f_*f^* \Rightarrow \id_{\cat^{\pQ}}
  \]
  called the \define{unit} and the \define{co-unit} of the adjunction that participate in the observation that $f_*$ is \define{left adjoint} to $f^*$.
\end{lem}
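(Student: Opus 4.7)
My plan is to verify the adjunction by directly producing the unit and counit natural transformations and then (briefly) indicating the triangle identities, since these are what subsequent proofs in the paper actually invoke. The bulk of the work is really just an exercise in universal properties of colimits, so I would organize it as follows.

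First I would construct the unit $\upsilon\colon \id_{\cat^{\pP}} \Rightarrow f^*f_*$. Given a $\pP$-module $M$ and an object $p\in\pP$, we have
\[
  (f^*f_*M)(p) = f_*M(f(p)) = \varinjlim_{p'\mid f(p')\leq f(p)} M(p').
\]
Since $p$ itself satisfies $f(p)\leq f(p)$, the element $p$ sits inside the indexing comma category $(f\downarrow f(p))$, and the structure map of this colimit provides a canonical morphism $\upsilon_{M,p}\colon M(p)\to (f^*f_*M)(p)$. I would then check naturality of $\upsilon_M$ in $p$ (so that $\upsilon_M$ is a morphism of $\pP$-modules) and naturality in $M$; both follow from universality of colimits together with the fact that if $p\leq q$ in $\pP$ then $(f\downarrow f(p))$ sits inside $(f\downarrow f(q))$.

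Second I would construct the counit $\chi\colon f_*f^* \Rightarrow \id_{\cat^{\pQ}}$. Given a $\pQ$-module $N$ and $q\in\pQ$,
\[
  (f_*f^*N)(q) = \varinjlim_{p\mid f(p)\leq q} N(f(p)).
\]
For every $p$ in this index category, the internal morphism $N(f(p)\leq q)$ gives a map $N(f(p))\to N(q)$. These maps form a cocone over the diagram (by functoriality of $N$ and because the indexing is through a comma category), so the universal property yields a unique morphism $\chi_{N,q}\colon (f_*f^*N)(q)\to N(q)$. Naturality in $q$ and in $N$ follow again from the universal property.

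Third, to conclude the adjunction I would invoke the natural hom-set bijection
\[
  \mathrm{Hom}_{\cat^{\pQ}}(f_*M,\,N) \;\cong\; \mathrm{Hom}_{\cat^{\pP}}(M,\,f^*N),
\]
which is immediate from the universal property of the pointwise colimit defining $f_*M$: a natural family of morphisms $f_*M(q)\to N(q)$ is the same as, for every $q$ and every $p$ with $f(p)\leq q$, a morphism $M(p)\to N(q)$ compatible with the two module structures; and such a family is determined by its ``diagonal'' values $M(p)\to N(f(p))$, i.e.\ by a morphism $M\to f^*N$ of $\pP$-modules. The unit and counit extracted from this bijection agree with the ones constructed above.

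The only place there is anything to worry about is compatibility: checking that the cocones defining $\upsilon$ and $\chi$ really are cocones, and checking that the hom-set bijection is natural in both variables. These are the main obstacle only in the bookkeeping sense — they amount to unwinding the definitions of $f^*$ and $f_*$ and pushing everything through the universal property of $\varinjlim$. Since the paper only needs the existence of $\upsilon$ and $\chi$, and defers the explicit triangle identities to \cref{rmk:triangle-identities}, I would either close by remarking that the triangle identities are an immediate consequence of any adjunction (and hence hold here), or verify them by hand by tracing a single element through the composites $\chi_{f_*M}\circ f_*\upsilon_M$ and $f^*\chi_N\circ \upsilon_{f^*N}$ — in each case the composite is forced to be the identity by the uniqueness clause in the universal property of the defining colimit.
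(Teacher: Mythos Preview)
Your proposal is correct and follows essentially the same route as the paper: both construct $\upsilon$ by observing that $p$ lies in the indexing category $(f\downarrow f(p))$ and taking the colimit's structure map, and both construct $\chi$ by noting that the internal morphisms $N(f(p)\leq q)$ assemble into a cocone and invoking the universal property. If anything, you go further than the paper's self-described ``plausibility argument'' by addressing naturality, the hom-set bijection, and the triangle identities explicitly.
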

\begin{proof}
  The above statements are well known, but we sketch a plausibility argument to help guide the less familiar reader's understanding.
  First we consider the construction of the unit natural transformation $\upsilon:\id_{\cat^{\pP}} \Rightarrow f^*f_*$.
  Fix a $\pP$-module $M$, then by unraveling the definition of the pullback of the pushforward yields
  \begin{eqnarray*}
  f^*f_*M (p) & = & (f_*M)(f(p)) \\
    & = & \varinjlim_{p' \mid f(p')\leq f(p)} M(p').
  \end{eqnarray*}
  Now certainly it is the case that
  \[
    p\in \{p' \mid f(p')\leq f(p)\}
  \]
  because $f(p)\leq f(p)$.
  This implies that $p$ participates in the diagram that the colimit is taken over and hence there is a natural morphism
  \[
    M(p) \to \varinjlim_{p' \mid f(p')\leq f(p)} M(p').
  \]
  These piece together to form a morphism of modules
  $M \to f^*f_*M$ that is natural in $M$. This defines the
  unit of the adjunction $\upsilon$.

  To construct the co-unit of the adjunction
  $\chi: f_*f^* \Rightarrow \id_{\cat^{\pQ}}$, we follow
  a similar line of reasoning.
  Fix a $\pQ$-module $N$ and consider the following string of identies and morphisms:
  \begin{eqnarray*}
  f_*f^*N (q) &=& \varinjlim_{p \mid f(p)\leq q} (f^*N)(p)\\
    &=& \varinjlim_{p \mid f(p)\leq q} N(f(p))\\
    &\rightarrow& N(q)
  \end{eqnarray*}
  The last arrow exists by virtue of the fact that $N(q)$ has natural maps from all the elements that are in the sub-level set at $q$ and thus defines a co-cone.
  The colimit is the initial object in the category of co-cones, so it maps naturally to $N(q)$.
\end{proof}

\subsection{Cofinality}

Many arguments involving left Kan extensions, and hence colimits, requires showing that a particular functor is cofinal.
Cofinality allows us to replace one colimit with an equivalent, often simpler, colimit.

\begin{defn}\label{defn:cofinal}
A functor $E\colon \aat \to \bat$ is \define{cofinal} if for every object $b$ in $\bat$ the comma category $(b \downarrow E)$ is
\begin{itemize}
\item non-empty, and
\item connected.
\end{itemize}
Equivalently, a functor $E$ is cofinal if for every functor $F\colon\bat \to \cat$ to any category $\cat$ the induced map on colimits
\[
	\varinjlim F\circ E \to \varinjlim F
\]
is an isomorphism.
\end{defn}

\begin{rmk}
Note that the equivalence of these two definitions says that whether a diagram $F$ indexed by $\bat$ has the same colimit when restricted along $E\colon\aat \to \bat$ is dictated by the ``topological'' properties (nonemptiness and connectedness) of the comma categories $(b \downarrow E)$ for all objects $b$ in $\bat$.
Viewing these comma categories as fibers, the equivalence of the above two cofinality conditions is perhaps best viewed as a categorical analogue of the Vietoris Mapping Theorem.
\end{rmk}

\subsection{Cosheaves}

As an application of colimits and Kan extensions, we consider an important class of functors out of the open sets of a topological space.

\begin{defn}\label{defn:covers}
Let $\topX$ be a topological space.
We denote the poset of open sets in $\topX$, ordered by containment, by $\Open(\topX)$.
Consider a collection of open sets in $\topX$, which we write as $\covU=\{U_i\} \subseteq \Open(\topX)$.
\begin{enumerate}
	\item A \define{cover} of $U$ is a collection of open sets $\covU$ whose union is $U$.
	\item A \define{\v{C}ech cover} of $U$ is a cover $\covU$ of $U$ with the property that whenever a finite collection of $\{U_i\}_{\i\in \sigma}\subset \covU$ has non-empty intersection $U_{\sigma}=\cap_{i\in \sigma} U_i$, then $U_{\sigma}\in \covU$.
	\item A \define{basic cover} of $U$ is a cover $\covU$ of $U$ with the property that whenever $U_i, U_j\in \covU$, then $U_i\cap U_j$ is the union of elements in $\covU$.
\end{enumerate}
We note that every \v{C}ech cover is a basic cover.
\end{defn}

The notion of a basic cover comes from considering the defining properties of a basis for a topological space $\topX$---a basis is rarely closed under intersection, rather the intersections are unions of elements of the basis.

\begin{rmk}
The term \v{C}ech cover is borrowed from Dugger and Isaksen's article~\cite{dugger2004topological}.
The notion of a basic cover is closely related to the notion of a \emph{complete cover} given in the same article.
The difference is that a basic cover requires that pairwise intersections be covered, whereas a complete cover requires that all finite intersections be covered by elements of the cover.
For Dugger and Isaksen, this condition makes sense as they were interested in ``higher'' colimits whereas we are interested in ordinary colimits.
\end{rmk}

\begin{ex}[Intersections of Principals Not Principal]
Consider a down set $S\in \Down(\pP)$.
The collection of principal down sets $\{D_p\}_{p\in S}$ is a basic cover of the set $S$.
Note that the intersection of two principal down sets need not be principal in general, as the following example shows.
Here an element is higher in the partial order if an arrow points towards it.
\[
\begin{tikzcd}
& \bullet & \\
\bullet \ar[ru] \ar[rd] & \bullet \ar[l] \ar[r] \ar[u] \ar[d] & \bullet \ar[lu] \ar[ld] \\
& \bullet &
\end{tikzcd}
\]
If we take the principal down sets at the top and bottom vertex, then their intersection has two maximal elements, contained in the horizontal zig-zag.
\end{ex}

We now provide two notions of a cosheaf.

\begin{defn}\label{defn:cosheaf}
Let $\topX$ be a topological space and let $\cat$ be a category with all colimits.
A functor $\cosheaf{F}\colon\Open(\topX) \to \cat$ is a \define{cosheaf} if for every open set $U$ and every \v{C}ech cover $\calU$ of $U$ the universal arrow
\[
\varinjlim_{U_i\in\calU} \cosheaf{F} (U_i)
\rightarrow
\cosheaf{F}\left(\varinjlim_{U_i\in\calU} U_i\right)
=
\cosheaf{F} (\cup U_i)
=
\cosheaf{F}(U)
\]
is an isomorphism.

Similarly, a \define{basic cosheaf} is a functor $\cosheaf{F}\colon\Open(\topX) \to \cat$ with the property that for every open set $U$ and every basic cover of $U$, the same universal arrow above is an isomorphism.
Note that every basic cosheaf is a cosheaf by virtue of the fact that a \v{C}ech cover is a basic cover.
Denote the category of cosheaves on $\topX$ by $\CoShv(\topX;\cat)$ and the category of basic cosheaves by $\CoShv_{\flat}(\topX;\cat)$
\end{defn}

We are now in a position to state a main theorem: that the left Kan extension provides a basic cosheaf.
We refer to~\cite{curry2019kan} for a complete proof.

\begin{thm}
\label{thm:kan-cosheaves}

Let $M\colon\pP \to \cat$ be a functor from a poset $\pP$ to a co-complete category $\cat$, i.e. a $\pP$-module valued in $\cat$.
Let $\iota\colon \pP \to \Down(\pP)$ denote the functor that takes an element $p\in \pP$ to the principal down set $D_p$.
The left Kan extension of $M$ along $\iota$, written $\iota_*M$ below, is a basic cosheaf.
\[
\xymatrix{ \pP \ar[r]^{M} \ar[d]_{\iota} & \cat \\ \Down(\pP) \ar@{.>}[ru]_{\Lan_{\iota} M = \iota_*M} & }
\]
\end{thm}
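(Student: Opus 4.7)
The plan is to prove \cref{thm:kan-cosheaves} by reducing the basic cosheaf axiom to a cofinality statement. First I would unpack $\iota_*M$. By \cref{defn:pointwise-left-Kan} and the observation that the comma category $(\iota \downarrow S)$ is canonically isomorphic to $\pP_S := \{p \in \pP \mid p \in S\}$ (since $D_p \subseteq S$ iff $p \in S$), we get
\[
  (\iota_*M)(S) \;=\; \varinjlim_{p \in S} M(p).
\]

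Now fix a basic cover $\calU = \{S_i\}$ of a down-set $S$, regarded as a subposet of $\Down(\pP)$. The task is to show that the canonical comparison morphism
\[
  \Phi \colon \varinjlim_{S_i \in \calU} (\iota_*M)(S_i) \longrightarrow (\iota_*M)(S)
\]
is an isomorphism. Using a Fubini argument for colimits (equivalently, the Grothendieck construction), I would rewrite the domain of $\Phi$ as a single colimit over the category $\int\calU$ with objects $(S_i, p)$ for $p \in S_i$ and morphisms $(S_i, p) \to (S_j, q)$ given by $S_i \subseteq S_j$ in $\calU$ together with $p \leq q$ in $\pP$. This yields
\[
  \varinjlim_{S_i \in \calU} \varinjlim_{p \in S_i} M(p) \;\cong\; \varinjlim_{(S_i,p) \in \int\calU} M(p),
\]
under which $\Phi$ is identified with the canonical map induced by the projection $\pi \colon \int \calU \to \pP_S$, $(S_i, p) \mapsto p$. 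Hence it suffices to prove $\pi$ is cofinal in the sense of \cref{defn:cofinal}.

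For cofinality I would verify that for every $p \in S$ the comma category $(p \downarrow \pi)$ is non-empty and connected. Non-emptiness is immediate: since $\bigcup_i S_i = S$, some $S_i$ contains $p$, so $((S_i, p), p \leq p)$ is an object. Connectedness is where the basic cover axiom bites. Given two objects $((S_i, q_1), p \leq q_1)$ and $((S_j, q_2), p \leq q_2)$, the fact that each $S_i$ is a down-set forces $p \in S_i \cap S_j$. By the basic cover axiom, $S_i \cap S_j$ is a union of cover elements, so there exists $S_k \in \calU$ with $p \in S_k \subseteq S_i \cap S_j$. The object $((S_k, p), p \leq p)$ then admits morphisms into both of the original objects, furnishing the required zig-zag.

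The main obstacle is correctly pinning down the role of the basic cover condition in proving connectedness: a plain cover would give non-emptiness only, which in turn would yield merely that $\Phi$ is an epimorphism. The basic cover axiom is precisely what allows one to find a common refinement at a point $p$ of two cover elements, so that any two local contributions can be compared through it. A secondary piece of bookkeeping is verifying that the Fubini reformulation of the left-hand colimit really does identify $\Phi$ with the map induced by $\pi$, which is a standard consequence of the Grothendieck construction.
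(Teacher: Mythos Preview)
Your argument is correct. The paper itself does not prove \cref{thm:kan-cosheaves}; it simply states the result and defers to \cite{curry2019kan} for the details. Your approach---rewrite the iterated colimit via the Grothendieck construction and reduce to cofinality of the projection $\pi\colon \int\calU \to \pP_S$---is the standard one, and the key step (using that each $S_i$ is a down set to get $p\in S_i\cap S_j$, then invoking the basic cover axiom to find $S_k\in\calU$ with $p\in S_k\subseteq S_i\cap S_j$) is exactly right. Your remark that a mere cover would only give non-emptiness, hence only an epimorphism, is also on point and isolates precisely why the basic cover hypothesis is needed.
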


\printbibliography

\end{document}